\theoremstyle{plain}
\newtheorem{theorem}{Theorem}[section]
\newtheorem{proposition}{Proposition}[section]
\newtheorem{lemma}{Lemma}[section]
\newtheorem{definition}{Definition}[section]
\newtheorem{assumption}{Assumption}[section]
\theoremstyle{remark}
\numberwithin{equation}{section}
\numberwithin{table}{section}
\numberwithin{figure}{section}
\begin{document}
\title{Analysis of a wavelet frame based two-scale model for enhanced edges}
\author{Bin Dong}
\address{Beijing International Center for Mathematical Research \&
Center for Machine Learning Research,
Peking University, Beijing 100871, P. R. China.}
\email{dongbin@math.pku.edu.cn}

\author{Ting Lin}
\address{School of Mathematical Sciences, Peking University, Beijing 100871, P. R. China.}
\email{lintingsms@pku.edu.cn}
\author{Zuowei Shen}
\address{Department of Mathematics, National University of Singapore, 117543, Singapore.}
\email{matzuows@nus.edu.sg}

\author{Peichu Xie}
\address{Beijing International Center for Mathematical Research,
Peking University, Beijing 100871, P. R. China.} 
\email{xie@u.nus.edu.sg}

\maketitle
\begin{abstract}

Image restoration is a class of important tasks that emerges from a wide range of scientific disciplines. It has been noticed that most practical images can be modeled as a composition from a sparse singularity set (edges) where the image contents or their gradients change drastically, and cartoon chunks in which a high degree of regularity is dominant. Enhancing edges while promoting regularity elsewhere has been an important criterion for successful restoration in many image classes. In this article, we present a wavelet frame based image restoration model that captures potential edges and facilitates the restoration procedure by a dedicated treatment both of singularity and of cartoon. Moreover, its geometric robustness is enhanced by exploiting subtle inter-scale information available in the coarse image. To substantiate our intuition, we prove that this model converges to one variant of the celebrated Mumford-Shah model when adequate asymptotic specifications are given. 

\end{abstract}
\tableofcontents
\section{Introduction}

Image restoration is a class of important tasks that emerges from a wide variety of scientific disciplines, where many restoration problems can be modeled as solving the inverse problem
\begin{equation}\label{inverse_problem}
\mathbf f=\mathbf A\mathbf u+\varepsilon,
\end{equation}
where $\mathbf f$ denotes the observed data, $\mathbf u$ denotes the unknown clean image that is to be restored, which is usually defined on a discrete pixel lattice $\mathbb O\subset\mathbb R^d$ (where we assume $d=2$ throughout this article), and $\mathbf A$ denotes an operator that characterizes the image acquisition procedure itself and/or contaminations that happen with it. For instance, $\mathbf A$ represents pixel restriction for inpainting, in particular, identity for denoising, (generalized) convolution for deblurring and line integration transform for CT imaging task. Finally, $\varepsilon$ denotes a (Gaussian) noise. The operator $\mathbf A$ often happens to be singular, therefore, in order to secure a meaningful solution, one needs to impose prior conditions on the class where clean images reside. 

Practically, one resorts to solving a regularized optimization problem of form 
\begin{equation}\label{reg_plus_fid}
\inf_{\mathbf u}\{Reg(\mathbf u)+Fid_{\mathbf f}(\mathbf u)\},
\end{equation}
where $Reg(\mathbf u)$ denotes the \emph{regularity term} that substantiates this prior on $\mathbf u$, and $Fid_{\mathbf f}(\mathbf u)=\frac{1}{2}\|\mathbf u-\mathbf f\|_2^2$ is the \emph{fidelity term} that measures the distance between $\mathbf u$ and $\mathbf f$ in the context of operator $\mathbf A$.

Historically, models based on \emph{wavelet frames} have served this purpose with remarkable success. By construction, wavelet frames are spatially localized objects that satisfy Parseval reconstruction identity but are not completely orthogonal to each other. Thanks to such flexibility as well as their intrinsic \emph{multi-resolution analytic (MRA)} structure, they have become a powerful instrument for achieving a balance between large-scale patterns and delicate local features (including sharp edges) in images. One prominent prototype is the \emph{analysis model} 
$$\inf_{\mathbf u}\left\{\lambda\|\mathbf W\mathbf u\|_{\ell^p}^p+\frac{1}{2}\|\mathbf A\mathbf u-\mathbf f\|_{\ell^2}^2\right\},$$ 
where $\mathbf W\mathbf u$ are the \emph{coefficients} obtained from the \emph{wavelet transform} $\mathbf W$, and the index $p\in(0,2)$. 

We note that in \cite{l0} the authors also addressed to the rather unusual setting with $p=0$, which was previously considered hard to solve. In particular, the norm-like procedure $\|\cdot\|_{\ell^0}$ counts the non-zero entries presenting in the respective set of coefficients, but entirely ignores their values. In this aspect, their model works more thoroughly than the traditional analysis model regarding edge restoration.

As a complementary methodology to discrete approach (on which we place the focus of this paper), variational image processing has been successful either, where one considers a function defined on a continuum domain in \eqref{reg_plus_fid}, and characterizes its regularity by variation or Sobolev norms. In \cite{tv-wf} and later in \cite{wf-pde}, the intrinsic connections between these two distinct approaches has been largely clarified. One of the key observations there is that, at finite resolution, wavelet frames (filters) supply as a versatile instrument for performing local and global time-frequency analysis, while at continuum (infinite) resolution, these are asymptotically correspondent to differential operators if certain mild conditions are imposed. Motivated by these, a number of investigations are further conducted in order to help model designing as well as to facilitate theoretical analysis.

In \cite{wf-g}, the authors formulated the following model, $$\inf_{\mathbf u,\mathbf v}\left\{\|\lambda_1\cdot(\mathbf W'\mathbf u-\mathbf v)\|_{\ell^p}^p+\|\lambda_2\cdot\mathbf W''\mathbf v\|_{\ell^q}^q+\frac{1}{2}\|\mathbf A\mathbf u-\mathbf f\|_{\ell^2}^2\right\},$$ where $\mathbf W'$ denotes the regular wavelet transform while $\mathbf W''$ denotes the \emph{wavelet packet transform} with respect to a chosen \emph{wavelet packet system}. This amounts to applying a finer time-frequency analysis to $\mathbf u$. In particular, the auxiliary variable $\mathbf v$ provides chances for the component (\emph{i.e.} partial coefficients) of $\mathbf u$ that is potentially dense, having a large $\ell^p$-sum, but can otherwise be reduced to a succinct composition of wavelet packets. This often happens in areas filled with regular cartoons. On the other hand, at points near singularity, little quantity of $\mathbf v$ can be separated out for further $\mathbf W''$ transform as it usually results in explosive coefficients. By asymptotic analysis, the authors found their discrete model energy converges to the \emph{TGV (total generalized variation) model}, a prominent variant of the classic TV-based one, when a suitable context is given.

In \cite{wf-pw}, in particular, a natural image is \emph{a priori} modeled as a {\emph{piecewise smooth function}}, and heuristically, a discrete wavelet frame based method is developed to facilitate restoration at any finite resolution. In particular, the model takes the form $$\inf_{\mathbf u}\left\{\|\lambda\cdot\mathbf W_1\mathbf u\|_{\ell^p(\Gamma^c)}+\|\mu\cdot\mathbf W_2\mathbf u\|_{\ell^1(\Gamma)}+\frac{1}{2}\|\mathbf {Au}-\mathbf f\|_{\ell^2}^2\right\},$$ where $\mathbf W_1$, $\mathbf W_2$ denote two flexibly specified wavelet transforms, $\Gamma$ denotes a pre-estimated singularity set on which an $\ell^1$-summation is done, and on the non-singular grid $\Gamma^c$ a power index $p>1$ is applied. By asymptotic analysis, on the singularity (called also a \emph{jump} or an \emph{edge}, depending on the context), the $L^1$-integrated magnitude of discontinuity is identified as a part of the limit energy, while on the region away from it, a Sobolev semi-norm is identified instead.

This model has been proven successful in the simultaneous restoration of cartoons and edges, however, some crucial issues have been left to address by that time. First of all, sharp edges, which are important features of images, cannot get sufficiently enhanced since the $\ell^1$-summed (respectively, $L^1$-integrated) cost by nature disfavors their occurrences. Secondly, the singularity location must be provided separately in advance, which leads to additional calculation burden and potential algorithmic instability.

Having these observations and practical motivations in mind, in this paper, we consider a minimization problem over the following energy functional, 
\begin{equation}
\mathbf E(\mathbf u)=\sum_{k}(\hat{\mathbf W}\mathbf u)[k]+\sum_{k}|(\mathbf A\mathbf u)[k]-\mathbf f[k]|^2,
\end{equation}
where the \emph{truncated wavelet frame operator} $\hat{\mathbf W}$ is realized as an ordinary wavelet frame transform $\mathbf W$ followed by two subsequent operations: (i) a \emph{truncated $\ell^r-$summation} over a carefully chosen (coarse-scale) neighbourhood of the lattice point $k$ in question, properly weighted in each channel; (ii) an $\ell^q-$summation over the channels. The details of its definition and relevant technicalities are presented in Subsection \ref{model_discrete}, where we focus on the special case $r=\infty$ and $q=1$. 

This model inherits the advantages usually associated with wavelet frames in aspects of time-frequency analysis, in particular, it captures cartoons and milder variations of the image nicely. Moreover, thanks to the coefficient truncation design, it tolerates sharp edges present in the image much better. Indeed, when jump size becomes significant, the energy cost (density) does not depend on it any longer. This is in a sense akin to the $\ell^0$-regularization method \cite{l0}. 
In addition, our model manages to regain isotropy systematically near singularity due to its two-scale treatment, which integrates local geometric information necessary to compensate for the innate anisotropy of any (discrete) wavelet frame system adopted. 

Further leveraging the asymptotic analysis method made available by previous studies, we exhibit that with certain particular assumptions on the choice of wavelet frames and their weights accordingly, our discrete model converges to a limit one with variational energy 

\begin{equation}
E(u)=\int_{\Omega\setminus\Gamma^*_u}|\nabla u|+\alpha\cdot\mathcal H^1(\Gamma^*_u)+\int_\Omega|Au-f|^2,
\end{equation}
where $\Gamma^*_u$ denotes the non-trivial jumps of $u$ and $\mathcal H^1(\Gamma^*_u)$ its one-dimensional Hausdorff measure. This model allows certain jumps of $u$ to be evaluated only by the length of support, not the step magnitude. Thus it particularly helps to keep (enhance) more significant jumps, and besides, treats smooth cartoon and minor jumps in an equal way, corroborating our discrete intuition. Technically, the scalar parameter $\alpha$ controls the transition between these two characteristic regimes.

{In general, efforts calculating or analyzing such models with flexible singularity often suffer some degree of technical pathology, being it discrete or continuum based. Except for various grid artifacts that are largely prevailing around curves, the unwieldy flexibility of an $SBV$ function's singularity also complicates the situation. }{In what follows, we address this problem by means of wavelet frames. We believe that our theoretical analysis has its merits in the context of singularity modeling in general, and provides insights for further implementations as well.}

It is worth note that comparing with our method, the classical Mumford--Shah model, for instance, treats minor jumps in a more rigid way, conceding to their smooth (regularized) counterpart. In image classes which do naturally contain minor jumps (\emph{e.g.} as a texture), this results in loss of information. Notable discretizations include \cite{chambolle1999finite}, in which the authors implemented a global scheme, as well as \cite{gobbino1998finite}, where an integration of finite difference scheme is successfully applied, though its context is restricted to $\Gamma$-convergence on $L^2$. Generally speaking, a completely satisfying treatment has not been achieved yet.

The rest of the paper is organized as follows. In Section \ref{sec_model_def}, we precisely specify our discrete two-scale model and in turn propose its asymptotic limit as the resolution tends to infinity. In Section \ref{proof_liminf}, we explain their working principles quantitatively. First of all, we show that in context pertaining to these particular models, any generic specimen of $SBV$ singularity can be adequately approximated by one supported on certain \emph{simple union} of curve segments (cf. Lemma \ref{curve_truncation}). Then, by rigorous theoretical analysis, we illuminate how our model's two-scale design can advance in capturing the latter by proper choice of wavelets and coefficient cut-off. In particular, we provide energy estimates near and off singularity respectively, and clarify how they exhibit distinct characteristic behaviours when resolution $n\to\infty$ and in what way they together contribute to our desired asymptotics. Finally, we construct a sequence of functions that secures the tightness of the asymptotic energy bound proposed, completing our analysis with a standard $\Gamma$-convergence characterization.

\section{{Model constructions}}\label{sec_model_def}

Here we introduce the discrete and continuum models from the two aforementioned methodologies, having in mind their respective advantages. In Subsection \ref{model_discrete} we provide a concise introduction to wavelet frames, and define our discrete model accordingly. Later, in Subsection \ref{continuum_model} we recall the definition and fundamental properties of special functions of bounded variation (SBV functions), and introduce a variational model based on SBV functions. The main contribution of our paper is the establishment of the new wavelet frame based model, and clarification of its fundamental properties by strict asymptotic analysis. In particular, we characterize the linkage between our discrete model and its proposed variational limit in terminology of $\Gamma-$convergence, and discuss some interesting special cases toward the end of this section.

\subsection{A wavelet frame based two-scale model (discrete)}\label{model_discrete}

Frames are flexible objects in a Hilbert space $\mathcal H$ that generalizes the conception of its orthonormal bases \cite{tv-wf,notes}. In particular, a finite or countable set $\{\psi_i\}_{i\in I}\subset\mathcal H$ is said to form a \emph{tight frame} of $\mathcal H$ if the identity
\begin{equation}
f=\sum_{i\in I}\left<f,\psi_i\right>\psi_i,
\end{equation}
is satisfied for any element $f\in\mathcal H$. In this article we shall focus on the special case $\mathcal H=L^2(\mathbb R^d)$ (with $d=2$ particularly).

Generally, there are many ways of constructing frames in a given Hilbert space, and as a prominent example, \emph{wavelet frames} are based on translations and dilations of certain specific (finite) collections of functions. More precisely, denote this collection as $\Psi=\{\psi_1,\cdots,\psi_J\}$, where each element $\psi_j\in L^2(\mathbb R^d)$, the \emph{quasi-affine system} that they form is defined as
\begin{equation}
X^q(\Psi)=\{\psi_{j,n,k}~|~j=1,\cdots,J;\ n\in\mathbb Z;\ k\in\mathbb Z^d\},
\end{equation}
where
\begin{equation}\label{def_trans_dil}
\psi_{j,n,k}(x)=\left\{
\begin{matrix}
2^\frac{nd}{2}\psi_j(2^nx-k), & \text{ if }n\geq0,\\
2^{nd}\psi_j(2^nx-2^nk), & \text{ if }n<0.
\end{matrix}
\right.
\end{equation}
When $X^q(\Psi)$ does form a (tight) frame in $L^2(\mathbb R^d)$, it is called a wavelet (tight) frame, which has very desirable time-frequency analytical properties for various applications. Therefore, it becomes important to find criteria so that this is theoretically guaranteed. In history, a generic family of them is formulated based on the idea of \emph{multi-resolution analysis (MRA)}, where an additional object $\psi_0:=\phi\in L^2(\mathbb R^d)$ (called a \emph{refinable function}, and $\phi_{n,k}:=\phi_{0,n,k}$ is defined by \eqref{def_trans_dil} where we allow $j=0$) is introduced together with some \emph{filters} $h_j\in\ell^2(\mathbb Z^d)$ ($j=0,1,\cdots,J$), specified so that the identity,
\begin{equation}
\psi_j(x)=2^{d}\sum_{k\in\mathbb Z^d}h_j[k]\phi(2x-k)
\end{equation}
can be verified for each $j$. Practically, there are a number of \emph{extension principles} developed in the literature, which help us find out feasible combinations of $\Psi$, $\phi$ and $h_j$, such as the \emph{unitary extension principle (UEP)}, which states that if those aforementioned functions, in particular, the filters $\{h_j\}_{j=1}^J$, are chosen such that
\begin{equation}
\sum_{j=0}^J\left|\widehat{h}_j(\xi)\right|^2=1\ \ \text{and}\ \ \sum_{j=0}^J\widehat{h}_j(\xi)\overline{\widehat{h}_j(\xi+\nu)}=0,
\end{equation}
where $\nu\in\{0,\pi\}^d\setminus(0,\cdots,0)$, then the quasi-affine system $X^q(\Psi)$ (as well as its affine counterpart) forms a tight frame. Indeed, if we know of a filter collection $\{h_j\}_{j=1}^J$ that satisfies these equations, we can reconstruct the refinable function $\phi$ as well as the wavelets $\psi_j$ with the so-called cascade algorithm, but we refrain from going into the details here.

The so-called $B$-splines are among the excellent examples of wavelet frames in $L^2(\mathbb R)$ which are obtainable from the UEP. In particular, the \emph{Haar wavelet frame} with 
\begin{equation}
\phi(x)=\left\{
\begin{matrix}
1,\ if\ x\in[0,1],\\
0,\ otherwise.
\end{matrix}
\right.
\end{equation}
\begin{equation}
h_0=\frac{1}{2}\cdot[1,1]\ \ and\ \ h_1=\frac{1}{2}\cdot[1,-1],
\end{equation}
and the \emph{piecewise linear wavelet frame} with 
\begin{equation}
\phi(x)=\left\{
\begin{matrix}
1-|x|,\ if\ x\in[-1,1],\\
0,\ otherwise.
\end{matrix}
\right.
\end{equation}
\begin{equation}
h_0=\left[\frac{1}{4},\frac{1}{2},\frac{1}{4}\right],\ \ h_1=\left[\frac{1}{2\sqrt{2}},-\frac{1}{2\sqrt{2}}\right]\ \ and\ \ h_2=\left[\frac{1}{4},-\frac{1}{2},\frac{1}{4}\right],
\end{equation}
are frequently applied examples that are identified as the two lowest-order ones in this class respectively. In order to construct two-dimensional (and other high-dimensional) examples out of $L^2(\mathbb R)$ wavelet frames, one may resort to the \emph{tensor product} construction, which specifies
\begin{equation}
\phi^{\otimes}(x,y)=\phi(x)\phi(y)
\end{equation}
as the refinable function, and
\begin{equation}
h^{\otimes}_{i,j}[k,l]=h_i[k]h_j[l]
\end{equation}
where $(i,j)\in\{1,\cdots,J\}^2$ as the corresponding filters. For more extensive elaboration on wavelets and wavelet frames in particular, we suggest \cite{notes} as a reference.

Before stating the construction of our model out of these mathematical techniques, we further introduce some notations. Conforming to the dilation convention, we denote the pixel width at resolution $n$ as $h_n:=2^{-n}$. A digital image signal $\mathbf u_n$ is modeled as a real-valued function defined on a subset of the discrete lattice $\mathbb O_n=h_n^{-1}\Omega\cap\mathbb Z^2$. Very often, in particular, we wish to analyze digital signals obtained from natural (analogue) ones, and for this purpose we consider the \emph{sampling operator} $\mathbf T_n$ which is defined by identity
\begin{equation}\label{sampling_operator}
\mathbf T_nu=2^n\left<u,\phi_{n,k}\right>,  
\end{equation}
where $u\in L^2(\Omega)$ and $k\in\mathbb K_n:=\{k\in\mathbb O_n:supp(\phi_{n,k})\subset\Omega\}$, the subset of $\mathbb O_n$ on which the pairing is well-defined. In this framework, we perform analysis regarding the properties of our discrete model, in particular, its asymptotic properties. It is worth emphasizing that although our expositions that follow always involve the resolution index $n$, it is however only for analytical convenience and heuristic purposes, and at any \emph{particular} resolution the model gains significance on its own.

In order to effectively characterize regularity as well as various singular phenomena of the image $\mathbf u_n$, we apply the ($\lambda_{n,j}$-weighted) \emph{discrete wavelet transform} $\mathbf W_n$ to this image and obtain its coefficients,
\begin{equation}\label{wavelet_coeff}
c_{n,j}[k]:=\lambda_{n,j}\cdot\mathbf W_n\mathbf u_n[j;k]:=\lambda_{n,j}\cdot(h_{n,j}*\mathbf u_n)[k],
\end{equation}
where $h_{n,j}$ stands for the convolution filter corresponding to channel $j$ and the weights $\lambda_{n,j}$ are adjustable model parameters. 
{ In fact, we can show that $c_{n,j}$ can be regarded as sampling on the derivates of the image, see \eqref{coeff_local}.} 
In order to attain more powerful and more robust time-frequency analysis around a given location $k\in\mathbb K_n$, we define its \emph{lattice neighbourhood} $\mathbb B_{n,k}$ as
\begin{equation}
\mathbb B_{n,k}=\left\{l\in\mathbb K_n:|h_nl-h_nk|\leq H_n\right\},
\end{equation}
where the radius $H_n$ can be adjusted for various practical purposes. In general, our argument should be applicable for any choice of $\{H_n\}_{n=1}^\infty$ that satisfies $H_n\to0$ and $h_n/H_n\to0$, but for the sake of simplicity, we specify $H_n=2^{-n/2}$ throughout the article. For later convenience, we also reserve a notation for the channel-wise summed coefficients as follows, 
\begin{equation}
c_n[k]:=|\mathbf W_n\mathbf u_n|[k]:=\left(\sum_{j=1}^J\mathbf W_n\mathbf u_n[j;k]^2\right)^{\frac{1}{2}}.
\end{equation}

And finally, we specify the transform $\hat{\mathbf W}_n$, the central object of our model design, as a truncated $\ell^r-$sum of the relevant coefficients, this time within the designated neighbourhood $\mathbb B_{n,k}$, literally,
\begin{equation}
\hat c_n[k]:=\hat{\mathbf W}_n\mathbf u_n[k]:=\min\left\{M_n,\left(\sum_{\mathbf l\in\mathbb B_{n,k}}\left|\mathbf W_n\mathbf u_n\right|[l]\ ^r\right)^{\frac{1}{r}}\right\},
\end{equation}
where the truncation bound $M_n\in(0,\infty]$. Here the parameter $r$ is designed to control the way in which the neighbouring coefficients are accumulated together. In particular, the scenario $r=1$ corresponds to plain averaging, and $r=\infty$, on the other hand, to the local-maximum operation.

The model that we consider in this article is eventually formulated as a minimization problem over the following energy, 
\begin{equation}\label{def_model}
\mathbf E_n(\mathbf u_n)=h_n^2\sum_{k\in\mathbb K_n}\left|\hat{\mathbf W}_n\mathbf u_n[k]\right|^p+h_n^2\sum_{k\in\mathbb O_n}((\mathbf A_n\mathbf u_n)[k]-\mathbf f_n[k])^2,
\end{equation}
in which the first term is a wavelet measurement of the image's local variations, summed globally; and the second term is just the ordinary $\ell^2$-fidelity that keeps prospective solutions reasonably close to the observation $\mathbf f_n$.

\subsection{A Mumford--Shah type model (continuum)}\label{continuum_model}

Continuum image processing also provides an inspiring perspective. Considering images simultaneously in the discrete and the continuum spaces illuminates subtle connections between the crucial operations involved in the respective models, which is advantageous for enhancing our understanding of model designing in general. Here we introduce a variation based model that incorporates a geometric measure count for singularities in continuum images, which is characteristic of the Mumford-Shah model and its derivatives. Later we show that it becomes exactly the asymptotic limit of our discrete wavelet-based model when resolution $n$ goes to infinity, as far as a number of reasonable assumptions are satisfied.

To give the precise definition of this (Mumford--Shah type) functional in question, we need to introduce the environment in which we will mainly work throughout this paper, namely, the space of \emph{special bounded variation (SBV) functions}, a useful subspace of the standard BV space. The property of SBV function space and its related variational problems are discussed in \cite{ambrosio2000functions}. At this moment we first recall the definition and several fundamental analytical properties of BV functions. Readers may refer to \cite{evans2015measure} for a more extensive introduction.

Let $\Omega$ denote an open domain in $\mathbb R^d$ (where $d=2$ in most of our relevant cases). A function $u\in L^1(\Omega)$ is called a \emph{function of bounded variation} if its \emph{total variation}
\begin{equation}
|u|_{BV(\Omega)}=\sup_{v\in C^\infty(\Omega;\mathbb R^2), \|v\|_{L^{\infty} \le 1}}\int_\Omega u(x)\ (\nabla\cdot v)(x)dx<\infty.
\end{equation}
The collection of all such functions is known to form a Banach space, denoted $BV(\Omega)$, where its norm is defined as $\|u\|_{BV(\Omega)}=\|u\|_{L^1(\Omega)}+|u|_{BV(\Omega)}.$

\begin{theorem}[{\cite[Definition 3.91]{ambrosio2000functions}}]
If the function $u\in BV(\Omega)$, its distributional gradient has the following (unique) standard decomposition,
\begin{equation}
\nabla u=\nabla^au+\nabla^su+Cu,
\end{equation}
where $\nabla^au$ is an absolutely continuous measure (with respect to the $d$-dimensional Lebesgue measure), $\nabla^su$ is a singular measure that is supported on the (approximation) jump sets (\cite[Definition 3.67]{ambrosio2000functions}, or \cite[Section 5.9]{evans2015measure}), and $Cu$ is a measure that is perpendicular to both of the former ones.
\end{theorem}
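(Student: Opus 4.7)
The plan is to invoke the Lebesgue--Radon--Nikodym decomposition of $\nabla u$ with respect to the $d$-dimensional Lebesgue measure $\mathcal L^d$, and then refine the resulting singular component by a blow-up analysis at the approximate jump set.

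First, I would note that the definition of $u\in BV(\Omega)$ (equivalently, that $|u|_{BV(\Omega)}<\infty$) guarantees that the distributional gradient $\nabla u$ is an $\mathbb R^d$-valued Radon measure on $\Omega$ of finite total variation. Applying the Lebesgue decomposition theorem componentwise, or equivalently to the scalar measure $|\nabla u|$ equipped with a measurable unit Radon--Nikodym direction, yields a unique splitting $\nabla u=\nabla^a u+\mu_s$ with $\nabla^a u\ll\mathcal L^d$ and $\mu_s\perp\mathcal L^d$. The density of $\nabla^a u$ can be identified ($\mathcal L^d$-a.e.) with the approximate gradient of $u$ in the measure-theoretic sense.

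Next, I would decompose $\mu_s$ further by analysing the \emph{singular set} $S_u$ consisting of those $x\in\Omega$ at which $u$ admits no approximate limit. The key structural input is Federer--Vol'pert's theorem, which asserts that $S_u$ is $(d-1)$-rectifiable and differs from the approximate jump set $J_u$ by an $\mathcal H^{d-1}$-negligible set, and that at each point of $J_u$ the function $u$ has one-sided approximate limits $u^+,u^-$ across a measurable unit normal $\nu_u$. A blow-up computation at $x\in J_u$ then shows that $\mu_s$ has density $(u^+(x)-u^-(x))\nu_u(x)$ with respect to $\mathcal H^{d-1}|_{J_u}$; integrating over $J_u$ yields the well-defined jump part
\begin{equation*}
\nabla^s u=(u^+-u^-)\,\nu_u\,\mathcal H^{d-1}|_{J_u}.
\end{equation*}
Setting $Cu:=\mu_s-\nabla^s u$ then provides the Cantor component, which is singular with respect to $\mathcal L^d$ by construction, and, by the blow-up identification above, carries no mass on any Borel set of finite $\mathcal H^{d-1}$-measure; this is precisely the orthogonality to $\nabla^s u$ asserted in the statement.

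Uniqueness of the three pieces follows from the uniqueness clause of the Lebesgue decomposition, combined with the observation that $\nabla^s u$ is intrinsically determined by the one-sided traces $(u^+,u^-)$, which depend only on $u$. I expect the substantive obstacle in a self-contained proof to be the Federer--Vol'pert rectifiability theorem for $S_u$: its customary proof combines Besicovitch-type covering arguments with density estimates for $|\nabla u|$ near $S_u$ and, through a reduction to characteristic functions, invokes De Giorgi's structure theorem for sets of finite perimeter. Once this rectifiability input is granted, the blow-up identification of $\nabla^s u$ and the mutual singularity of $Cu$ with the other two parts are essentially formal consequences.
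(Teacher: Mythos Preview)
Your proof sketch is essentially the standard argument (Lebesgue--Radon--Nikodym decomposition of $\nabla u$ with respect to $\mathcal L^d$, followed by the Federer--Vol'pert rectifiability theorem to isolate the jump part), and it is correct as an outline. However, the paper does not supply its own proof of this statement: it is quoted as background from \cite[Definition~3.91]{ambrosio2000functions} and simply invoked without argument. So there is no ``paper's proof'' to compare against; your write-up is effectively reconstructing the proof from the cited reference, and it matches that source in structure.
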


In image processing, the absolutely continuous measure $\nabla^a u$ quantifies mild gradient changes presenting in images, while $\nabla^su$ describes jump discontinuities (in other words, sharp edges) in them. The latter, in particular, can be expressed as a sum of point/curve supported mass in dimensions 1 and 2 respectively. Both of these two components encode important features that are considered criteria for successful restoration. Comparatively, the third part $Cu$ corresponds to fractal singularities in images, and it is thus often considered to be negligible in practice. For this purpose, one often resorts to the following modified definition for image modeling.
    
\begin{definition}[{\cite[Section 4.1]{ambrosio2000functions}}]
\label{def_sbv}
Assume $u\in BV(\Omega)$, and the standard decomposition of its distribution gradient $\nabla u$ is expressed as $\nabla u=\nabla^a u+\nabla^s u+Cu.$ This function $u$ is called a special function of bounded variation, denoted $u\in SBV(\Omega)$, if $Cu=0$. Here the norm of SBV function is induced from the original BV norm.
\end{definition}

Most properties of the SBV function space, such as compactness, lower semi-continuity, etc., were discussed in \cite[Section 4.1]{ambrosio2000functions}. In the same reference, the existence of minimizers to relevant variational problems was extensively discussed, including that to the Mumford--Shah functional considered in this paper. Some crucial facts, in particular, are summarized into the following classical proposition on what is usually called \emph{the fine structure of SBV functions}.

\begin{proposition}[{\cite[Section 5.9]{evans2015measure}}]\label{fine_property}
    Let $u\in BV(\Omega)$, it follows that the singular gradient $\nabla^s u$ is supported on $\Gamma_u$, a countable union of $C^1$ rectifiable curves, that is, $\Gamma_u\subset\bigcup_{i=1}^\infty\Gamma^{(i)}$, where each $\Gamma^{(i)}$ denotes a $C^1$-curve. 
    If $u \in SBV(\Omega)$, then for each $x \in \Gamma_u$, with normal direction $\nu$, there exists $u^+_x$, $u^-_x$ (denoted as $u^+$, $u^-$ respectively if the context is clear), such that for $\mathcal H^1-$a.e. point $x$, 
    \begin{equation}
        \label{jump_local}
        u(\varepsilon y + x) \to u^+ \chi_{y \cdot \nu > 0} + u^- \chi_{y \cdot \nu < 0}, 
    \end{equation}
    in $L^2(dy; |y| \le 1)$ sense, as $\varepsilon \to 0$. 
    Similarly, for each $x \not \in \Gamma_u$, then there exists $\overline{u}$ such that for a.e. $\mathcal H^1$ point $x$, 
    \begin{equation}
        \label{smooth_local}
        u(\varepsilon y + x) \to \overline{u}.
    \end{equation}    
\end{proposition}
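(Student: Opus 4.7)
The plan is to split the proposition into its three logically distinct pieces and attack each with tools from geometric measure theory, since the statement is a quotation of the classical fine-structure theory of (S)BV functions. The first and most structural assertion, rectifiability of $\Gamma_u$ for $u\in BV(\Omega)$, I would derive from the coarea formula combined with De~Giorgi's structure theorem for sets of finite perimeter: applying the coarea decomposition to slice $u$ into super-level sets $\{u>t\}$, the singular part $\nabla^s u$ is seen to be concentrated on the union of the reduced boundaries $\partial^*\{u>t\}$, each of which is $\mathcal{H}^1$-rectifiable by De~Giorgi's theorem. A Fubini-type argument over $t$ then identifies $\Gamma_u$, up to an $\mathcal{H}^1$-null set, with a countable union of $C^1$-rectifiable curves $\Gamma^{(i)}$.

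Next I would address the blow-up behaviour away from $\Gamma_u$. For $\mathcal{L}^2$-a.e.\ $x\notin\Gamma_u$ one defines the approximate limit $\overline{u}(x)=\lim_{r\to 0}\fint_{B_r(x)}u$ using Besicovitch differentiation of the Radon measures $u\,\mathcal{L}^2$ and $\mathcal{L}^2$. Standard estimates from the Poincaré-type inequality for BV functions give that $u(\varepsilon y+x)\to\overline{u}(x)$ in $L^1(dy;|y|\le 1)$; since the asymptotics in $L^2$ is claimed for a.e.\ $\mathcal{H}^1$ point $x$, I would invoke essential boundedness of $u$ in a neighbourhood (which is the relevant case for images and may be imposed by truncation) to upgrade $L^1$ convergence to $L^2$ convergence by dominated convergence. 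At jump points $x\in\Gamma_u$, the tangent line direction $\nu^\perp$ and hence the unit normal $\nu$ exist at $\mathcal{H}^1$-a.e.\ $x$ by the rectifiability established in step one. The existence of the one-sided traces $u^\pm_x$ and the half-plane limit in \eqref{jump_local} is then obtained by applying Lebesgue differentiation to $u$ restricted to each half-ball $B_\varepsilon(x)\cap\{y\cdot\nu\gtrless 0\}$; crucially, the SBV hypothesis $Cu=0$ guarantees that no diffuse Cantor-type oscillation can obstruct these one-sided limits, so $\nabla^s u\lfloor_{\Gamma_u}$ is absolutely continuous with respect to $\mathcal{H}^1\lfloor_{\Gamma_u}$ with density $(u^+-u^-)\nu$.

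The main obstacle is the first step: rectifiability of $\Gamma_u$ is the deepest ingredient and combines the coarea formula, measurability of the reduced boundary, and De~Giorgi's tangent-plane theorem, any of which is nontrivial to prove from scratch. A secondary technical point is the passage from $L^1$ to $L^2$ blow-up convergence, which is not automatic without a uniform local $L^\infty$ bound on $u$; one must either restrict attention to bounded SBV functions (natural in the imaging context considered here) or state the convergence only in $L^1$. Once these two points are handled, the remaining parts of the proposition are direct applications of differentiation theorems for Radon measures to the absolutely continuous and singular (jump) components of $\nabla u$ separately, which is why the statement is usually cited rather than reproved.
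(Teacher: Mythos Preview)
The paper does not supply its own proof of this proposition: it is quoted verbatim as a classical result from \cite[Section 5.9]{evans2015measure} (with a parallel reference to \cite{ambrosio2000functions}) and is followed only by a one-sentence heuristic gloss. So there is no argument in the paper against which to compare your sketch.

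That said, your outline is a faithful summary of the standard route taken in the cited references: coarea plus De~Giorgi for rectifiability of the jump set, then measure differentiation for the blow-ups on and off $\Gamma_u$. Your caveat about the $L^1$-versus-$L^2$ blow-up is well placed; the textbook statements are typically in $L^1$, and the upgrade to $L^2$ indeed relies on some local boundedness or a truncation argument, which the present paper tacitly assumes in the imaging context. Since the paper treats the proposition purely as background machinery, your level of detail already exceeds what the authors intend here.
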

{
The proposition says that if we zoom in the function at a point on the jump set, then we can ultimately obtain a jump of some hyperplane. On the other hand, if the point is not on the jump set, the proposition will reproduce the famous Lebesgue's theorem.}

The aforementioned continuum limit of our discrete model is formulated by minimizing the following variational energy functional. 
\begin{equation}\label{energy_continuum}
E(u)=\left(\int_{\Omega\setminus\Gamma_u}|\nabla u|+\alpha\cdot\mathcal H^1(\Gamma^*_u)\right)+\int_{\Omega}|Au-f|^2,
\end{equation}
where
\begin{equation}
\Gamma^*_u=\{x\in\Gamma_u:\rho_u(x):= u^+_x - u^-_x\neq 0\}
\end{equation}
denotes the essential singularity of $u$. In particular, by Proposition \ref{fine_property}, $\mathcal H^1(\Gamma_u\setminus\Gamma^*_u)=0$. This functional is largely akin to the classical Mumford-Shah energy \cite{mumford1985boundary}, but rather than taking infimum with respect to all possible exception sets, it directly specifies as exception the function's jump singularity in the $SBV(\Omega)$ sense, $\Gamma^*_u$, so as to calculate its energy separately. 

Note that $\mathcal H^1(\Gamma_u^*) = \mathcal H^1(\Gamma_u)$, so the existence of the minimizer is just the same as the classical Mumford--Shah model formulated in the SBV function space, see the discussions in \cite[Chap. 7]{ambrosio2000functions}. We reserve the notation $\Gamma_u^*$ for later technical convenience.

\subsection{Model analysis: the main theorem}

Our main result is about the asymptotic correspondence between the discrete model \eqref{def_model} and the continuum model \eqref{energy_continuum} mentioned above. In order to state it precisely, we need to detail some technical assumptions regarding the formation of the wavelets applicable. 

\begin{assumption}\label{assumption_R}
    The assumptions on model \eqref{def_model} consist of the followings:
    
    (a) The wavelet collection $\Psi=\{\psi_j\}_{j=1}^J$ applied includes at least two elements, $\psi_1$ and $\psi_2$, which have vanishing moments $(1,0)$ and $(0,1)$ respectively. Their corresponding compactly supported anti-derivatives, $\varphi_j=(\partial_j)^{-1}\psi_j$ are either non-negative or non-positive.
    
    (b) There exists $C_{wav}>0$ such that $\emph{supp}(\psi_{j})\subset B(0,C_{wav})$.
    
    (c) There exists $C_2>0$ such that
    
    \begin{equation}\label{robust_supp}
    \Delta:=\inf_{\nu\in\mathbb S^1,a\in[-2,1]^2}\left(\sum_{j=1,2}\left(\int_{\mathbb R^2}\chi_{x\cdot\nu\geq 0}(x)\psi_j(x-a)dx\right)^2\right)^{\frac{1}{2}}\geq C_2.
    \end{equation}
    \end{assumption}

In general, we notice that in the above statement, condition \emph{(a)} requires that local (first order) derivatives of a target function can be sampled by wavelets of relevant vanishing moments. This is fundamentally integration by part, and interested readers may refer to \cite{tv-wf,wf-pde} for further explanations and technical applications. The other two conditions, \emph{(b) and (c)}, are imposed concerning the derivative sampler $\varphi_j$'s support, respectively, its \emph{robust support}. More specifically, if we denote $\mathbb D_1=\{(\cos\theta,\sin\theta)^\top:\theta\in[-\pi/3,\pi/3]\cup[2\pi/3,4\pi/3]\}$, $\mathbb D_2=\{(\cos\theta,\sin\theta)^\top:\theta\in[\pi/6,5\pi/6]\cup[7\pi/6,11\pi/6]\}$ and $I_j(\nu,a)=\int_{\mathbb R^2}\chi_{x\cdot\nu\geq0}(x)\psi_j(x-a)dx$ as appearing in \eqref{robust_supp}, we have
\begin{eqnarray}
|I_j|=|\left<\delta_\nu,\varphi_j(\cdot-a)\right>\nu_j|
\end{eqnarray}
where $\delta_\nu=\mathcal H^1\lfloor_{x\cdot\nu=0}$, and in turn,
\begin{eqnarray}
    \Delta&=&\inf_{\nu\in\mathbb S^1,a\in[-2,1]^2}\sqrt{I_1^2+I_2^2}\\
    &=&\min_{j=1,2}\left\{\inf_{\nu\in\mathbb D_j,a\in[-2,1]^2}\sqrt{I_1^2+I_2^2}\right\}\\
    &\geq&\min_{j=1,2}\left\{\inf_{\nu\in\mathbb D_j,a\in[-2,1]^2}|I_j|\right\}\\
    &\geq&\frac{1}{2}\cdot\min_{j=1,2}\left\{\inf_{\nu\in\mathbb D_j,a\in[-2,1]^2}|\left<\delta_\nu,\varphi_j\right>|\right\}=:\Delta'.
\end{eqnarray}
In some cases, it would be more convenient to verify the stronger (sufficient) condition $\Delta'\geq C_2>0$ instead. In particular, we note that for $B$-splines of order $k\geq6$, this condition is valid, together, indeed, with others specified in the assumption.

Also, we need the consistency (\emph{i.e.} asymptotic commutativity) of the sampling operator with respect to the processing operator family.
    
\begin{assumption}\label{assumption_F}
    The processing operator $A$ and its discrete counterparts $\mathbf A_n$ together satisfy 
\begin{equation}
\lim_{n\to\infty}\|\mathbf T_n(Af)-\mathbf A_n\mathbf T_nf\|_{L^2(\Omega)}=0,
\end{equation}
where $\mathbf T_n$ denotes the sampling operator defined as in \eqref{sampling_operator}.
\end{assumption}
In fact, many frequently encountered discrete/continual operators have a well-behaved counterpart in the opposite category, such as \emph{convolution} in deblurring tasks and \emph{(domain) restriction} in inpainting, with which the assumption can be verified~\cite{tv-wf}.

With these necessary technical assumptions, we attempt to identify the asymptotic (continuum) limit of our wavelet based two-scale model. Since the argument $\mathbf u_n$ of the model $\mathbf E_n$ has variable resolution $2^n$ (cf. \eqref{def_model}), we need to secure a ground on which to facilitate comparison. In this case, we again treat the discrete image in question as a sample, literally $\mathbf u_n=\mathbf T_nu_n$, where $u_n\in SBV(\Omega)$. Formally, $\mathbf E_n(\mathbf u_n)=(\mathbf E_n\circ\mathbf T_n)(u_n)$, where $\mathbf E_n\circ\mathbf T_n$ becomes a functional defined on the continuum space $SBV(\Omega)$. We analyze the sequence $\{\mathbf E_n\circ\mathbf T_n\}_{n=1}^\infty$ instead of $\{\mathbf E_n\}_{n=1}^\infty$, and denote the composition $\mathbf E_n\circ\mathbf T_n$ by plain font $E_n$ for convenience. 

The model limit would in turn be identified in the context of \emph{$\Gamma$-convergence}, regarding which we recall that: 

\begin{definition}
A sequence of functionals $\{F_n\}_{n=1}^\infty$ is said to $\Gamma$-converge to a limit functional $F$ (denoted $F_n\xrightarrow{\Gamma} F$) in a Banach space $\mathcal B$ if the following two conditions are satisfied together:

(i) Universal: $\liminf_{n\to\infty}F_n(u_n)\geq F(u)$ for \emph{any} $u_n\to u\in \mathcal B$;

(ii) Existence: $\limsup_{n\to\infty}F_n(u_n^*)\leq F(u)$ for \emph{at least one} sequence $u_n^*\to\ u\in \mathcal B$.
\end{definition}

Finally, we specify the relevant model parameters and attain our desired asymptotic analysis as follows:

\begin{theorem}\label{main_theorem}
Fix a quasi-affine wavelet frame system $X^q(\Psi)$ that satisfies Assumption \ref{assumption_R}, and suppose the processing operators $\mathbf A_n$, $A$ are given in accordance with Assumption \ref{assumption_F} with respect to sampling $\mathbf T_n$. In this circumstance, if we specify
$\lambda_{n,j}=-2^{n-2}\left(\int_{\mathbb R^2}\varphi_j\right)^{-1}$ 

and
$M_n=\frac{\alpha}{2H_n}$
in defining the model energy (cf. \eqref{def_model}), 

then there will be convergence $E_n\xrightarrow{\Gamma} E$ in $SBV(\Omega)$ as $ n \to \infty.$
\end{theorem}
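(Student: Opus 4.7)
The plan is to prove the two inequalities defining $\Gamma$-convergence separately, in both cases by decomposing the lattice $\mathbb{K}_n$ into a strip around the essential jump set $\Gamma_u^*$ and its complement, and treating the wavelet term in two very different asymptotic regimes. The fidelity term $h_n^2\sum_{k}|(\mathbf{A}_n\mathbf{u}_n)[k]-\mathbf{f}_n[k]|^2$ is the easier half: by Assumption \ref{assumption_F} it is asymptotically identical to $h_n^2\sum_k|\mathbf{T}_n(Au_n)[k]-\mathbf{f}_n[k]|^2$, which is a Riemann sum converging to $\int_\Omega|Au-f|^2$, so both halves of $\Gamma$-convergence in this piece follow from continuity of $A$ and the convergence $u_n\to u$.

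For the liminf inequality, given any sequence $u_n\to u$ in $SBV(\Omega)$, I would split $\mathbb{K}_n$ into a near-jump set $\mathbb{K}_n^{\mathrm{near}}$ of lattice points within distance $H_n+C_{wav}h_n$ of $\Gamma_u^*$, and its complement $\mathbb{K}_n^{\mathrm{far}}$. On the far part, the truncation $M_n=\alpha/(2H_n)\to\infty$ is inactive; the calibrated weights $\lambda_{n,j}=-2^{n-2}(\int\varphi_j)^{-1}$ together with Assumption \ref{assumption_R}(a) (via integration by parts using the antiderivatives $\varphi_j$) realise each $c_{n,j}[k]$ as an approximate sample of $\partial_j u$, while the local $\ell^\infty$-sum over $\mathbb{B}_{n,k}$ acts as an anisotropic mollification that washes out as $H_n\to 0$. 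A Riemann-sum argument combined with lower semi-continuity of the total variation yields $\liminf_{n}h_n^2\sum_{k\in\mathbb{K}_n^{\mathrm{far}}}|\hat c_n[k]|\geq\int_{\Omega\setminus\Gamma_u^*}|\nabla u|$. On the near part, I would use Proposition \ref{fine_property} and Assumption \ref{assumption_R}(c) to show that for $\mathcal{H}^1$-a.e.\ $x\in\Gamma_u^*$, the local maximum $\max_{l\in\mathbb{B}_{n,k}}|\mathbf{W}_n\mathbf{u}_n|[l]$ is bounded below by a constant multiple of $\lambda_{n,j}\cdot|\rho_u(x)|\sim 2^n|\rho_u(x)|$, which dominates $M_n\sim 2^{n/2}$ and saturates the truncation. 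Since each unit length of $\Gamma_u^*$ meets a strip of area $2H_n$, hence roughly $2H_n/h_n^2$ lattice points, the near-jump contribution is at least $h_n^2\cdot M_n\cdot(2H_n/h_n^2)\cdot\mathcal{H}^1(\Gamma_u^*)=\alpha\,\mathcal{H}^1(\Gamma_u^*)$ in the limit.

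For the limsup (existence) part, I would first invoke Lemma \ref{curve_truncation} to approximate any target $u\in SBV(\Omega)$ by functions whose jump set is a finite disjoint union of smooth arcs, then build a recovery sequence by a diagonal argument. For such a piecewise-nice $u$ the natural candidate is $u_n^*=u$ itself (discretised through $\mathbf{T}_n$), and the computation essentially reverses the liminf counting: near each arc the local sum is automatically capped by $M_n$, giving an upper bound of $\alpha\,\mathcal{H}^1(\Gamma_u^*)+o(1)$, while off the jump set the wavelet coefficients exhibit clean Riemann-sum behaviour producing $\int|\nabla u|+o(1)$. A transition buffer of width $O(H_n)$ absorbs boundary effects between the two regions and vanishes in the limit.

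The main obstacle will be the near-jump liminf analysis, on two counts. First, for an arbitrary sequence $u_n\to u$ rather than a tailor-made recovery sequence, one must show that the wavelet coefficients $\mathbf{W}_n\mathbf{u}_n$ retain the jump information of $u$ at scale $H_n$; this is exactly where Assumption \ref{assumption_R}(c) enters, providing an orientation-uniform lower bound $\Delta\geq C_2$ on the projection of a unit step onto the two sampling channels regardless of the tangent direction, regardless of within-neighbourhood offset. Second, the counting of saturated lattice points along a (generically curved) arc of $\Gamma_u^*$ must be uniform in local tangent and curvature; the two-scale choice $H_n=2^{-n/2}$ with $H_n/h_n\to\infty$ is designed precisely to wash out the lattice's innate anisotropy, while Lemma \ref{curve_truncation} reduces a generic rectifiable jump set to a controlled family on which this counting is tractable.
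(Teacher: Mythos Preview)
Your overall architecture matches the paper's: split the regularity term into near-jump and off-jump pieces, handle the fidelity term via Assumption~\ref{assumption_F}, and treat the two $\Gamma$-inequalities separately. The paper's own proof of Theorem~\ref{main_theorem} is in fact a short wrapper that delegates exactly these tasks to Propositions~\ref{limfid}, \ref{liminf} and~\ref{limsup}, and your sketch of the liminf half (including the role of Assumption~\ref{assumption_R}(c) and the two-scale counting) is essentially the content of Propositions~\ref{liminf_singular} and~\ref{liminf_nonsingular}.

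There is, however, a genuine gap in your limsup construction. You write that you would ``invoke Lemma~\ref{curve_truncation} to approximate any target $u\in SBV(\Omega)$ by functions whose jump set is a finite disjoint union of smooth arcs''. But Lemma~\ref{curve_truncation} does not produce such functions: it only selects a simple-union subset $\Gamma_\epsilon\subset\Gamma_u$ that captures most of $\nabla^s u$ and most of $\mathcal H^1(\Gamma_u^*)$. The function $u$ itself still has its full jump set $\Gamma_u$, and may also have an unbounded (though $L^1$) absolutely continuous gradient. Taking $u_n^*=u$ then fails on two counts: the residual singular mass on $\Gamma_u\setminus\Gamma_\epsilon$ can inflate the near-jump count beyond $\alpha\,\mathcal H^1(\Gamma_u^*)$, and the off-jump Riemann-sum argument needs $\nabla^a u$ bounded (or at least uniformly continuous) to control the local-max operation over $\mathbb B_{n,k}$. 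The paper closes this gap in Section~\ref{section_tightness} by a two-stage regularisation: first a non-homogeneous mollification $u'_m=u*'_m\eta$ (with radius vanishing on $\Gamma_m$, following \cite{de2017approximation}) that preserves $\nabla^s u$ on $\Gamma_m$ while smoothing everything else, then a deformation surgery near $\Gamma_m$ to enforce $\|\nabla^a u''_m\|_{L^\infty}<\infty$. Only then does Lemma~\ref{limsup_regular} apply, and a diagonal extraction gives the recovery sequence.

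A smaller point on the liminf: your claim that on $\mathbb K_n^{\mathrm{far}}$ ``the truncation $M_n$ is inactive'' is not literally true, since $\nabla^a u$ is merely $L^1$ and can exceed $M_n\sim 2^{n/2}$ on a set of positive measure. The paper handles this with a separate lemma (Lemma~\ref{loss_truncation}) showing that the truncation loss on the $a$-coefficients tends to zero by absolute continuity; you should not skip this step.
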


Theorem \ref{main_theorem} is to be established as a consequence of the following propositions, which characterize the asymptotics of regularity and fidelity energy functionals separately. The fidelity term has been separately considered in the previous work \cite{tv-wf}, in which they presented a result that was actually stronger than, namely, gamma convergence (which is not additive in general).

\begin{proposition}[\cite{tv-wf}]\label{limfid}
    Fix a function $f\in L^2(\Omega)$. For any variable $u\in L^2(\Omega)$, let its discrete fidelity functional (with respect to $f$) at resolution $n$ be defined as
    \begin{equation}
    F_n(u):=\|\mathbf A_n\mathbf T_nu-\mathbf T_nf\|_{\ell^2(\mathbb O_n)}^2,
    \end{equation}
    and that at continuum resolution as
    \begin{equation}
    F(u):=\|Au-f\|_{L^2(\Omega)}^2.
    \end{equation}
    If the operators $\mathbf A_n$ and $A$ presenting in these terms satisfy Assumption \ref{assumption_F} with respect to $\mathbf T_n$, then:
    
    (i) There is point-wise convergence
    \begin{equation}
    \lim_{n\to\infty}F_n(u)=F(u);
    \end{equation}
    
    (ii) The functional set $\{F_n\}_{n=1}^\infty$ is equi-continuous on $L^2(\Omega)$.
\end{proposition}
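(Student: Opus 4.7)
Both claims follow from the single decomposition
\begin{equation*}
\mathbf A_n\mathbf T_n u - \mathbf T_n f = \bigl(\mathbf A_n\mathbf T_n u - \mathbf T_n(Au)\bigr) + \mathbf T_n(Au - f),
\end{equation*}
combined with two ingredients: Assumption \ref{assumption_F}, which drives the first summand to zero in the $\ell^2(\mathbb O_n)$-norm, and the MRA-based fact that the sampling operator is asymptotically isometric, $\|\mathbf T_n g\|_{\ell^2(\mathbb O_n)} \to \|g\|_{L^2(\Omega)}$ for every $g \in L^2(\Omega)$. The latter reflects the refinability of $\phi$: the values $\mathbf T_n g[k] = 2^n\langle g,\phi_{n,k}\rangle$ encode the $V_n$-projection $P_{V_n}g$, which converges to $g$ in $L^2$ by density of $\bigcup_n V_n$, while the implicit $h_n^2 = 2^{-2n}$ weight in $\|\cdot\|_{\ell^2(\mathbb O_n)}$ cancels the $2^n$ prefactor to produce the correct quadrature.

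For part (i), triangulating the decomposition yields $\bigl|\,\|\mathbf A_n\mathbf T_n u - \mathbf T_n f\|_{\ell^2} - \|\mathbf T_n(Au - f)\|_{\ell^2}\,\bigr| \le \|\mathbf A_n\mathbf T_n u - \mathbf T_n(Au)\|_{\ell^2}$; the right-hand side tends to $0$ by Assumption \ref{assumption_F}, and the subtracted term converges to $\|Au - f\|_{L^2(\Omega)}$ by the sampling isometry applied to $g := Au - f$. Squaring gives $F_n(u) \to F(u)$. For part (ii), I would start from the polarization identity
\begin{equation*}
F_n(u_1) - F_n(u_2) = \bigl\langle \mathbf A_n\mathbf T_n(u_1-u_2),\ \mathbf A_n\mathbf T_n(u_1+u_2) - 2\mathbf T_n f\bigr\rangle_{\ell^2(\mathbb O_n)}.
\end{equation*}
Applying the decomposition with $f=0$ to $g = u_1 - u_2$ shows $\|\mathbf A_n\mathbf T_n g\|_{\ell^2} \to \|Ag\|_{L^2} \le \|A\|\cdot\|g\|_{L^2}$, so $\sup_n\|\mathbf A_n\mathbf T_n g\|_{\ell^2} < \infty$ for each $g$; Banach--Steinhaus applied to the sublinear functionals $g\mapsto \|\mathbf A_n\mathbf T_n g\|_{\ell^2(\mathbb O_n)}$ then upgrades this to a uniform operator bound $\sup_n\|\mathbf A_n\mathbf T_n\|_{L^2\to\ell^2} \le C$. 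Substituting back into the polarization identity and invoking Cauchy--Schwarz delivers $|F_n(u_1) - F_n(u_2)| \le C_R\cdot\|u_1-u_2\|_{L^2(\Omega)}$ on any $L^2$-ball of radius $R$ with $C_R$ independent of $n$, which is exactly equi-continuity.

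\textbf{Main obstacle.} The principal technical subtlety is precisely the step just highlighted: Assumption \ref{assumption_F} as stated is a pointwise (in $g$) and qualitative (vanishing in the limit) assertion, whereas equi-continuity requires a uniform-in-$n$ quantitative operator bound for $\mathbf A_n\mathbf T_n$. The Banach--Steinhaus step closes this gap provided each individual $\mathbf A_n\mathbf T_n$ is bounded as a map $L^2(\Omega)\to\ell^2(\mathbb O_n)$, which is immediate from the $L^2$-normalization of $\phi_{n,k}$ (bounding $\mathbf T_n$) together with the finite-dimensional nature of $\ell^2(\mathbb K_n)$ on a bounded $\Omega$. In the explicit cases discussed in the introduction (convolution, pixel restriction), this uniform bound is in fact transparent from the structure of $\mathbf A_n$ and no abstract appeal is needed.
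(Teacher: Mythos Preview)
The paper does not prove this proposition; it is imported from \cite{tv-wf}, as both the bracketed citation in the heading and the surrounding sentence (``The fidelity term has been separately considered in the previous work \cite{tv-wf}'') make explicit. There is therefore no in-paper argument to compare against.

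On its own merits your outline is the standard one and is essentially correct. Two remarks. First, you invoke Assumption~\ref{assumption_F} with $u$ in the role of the symbol $f$ appearing there; that is indeed the intended reading (it is a consistency hypothesis on the operator pair $(A,\{\mathbf A_n\})$, not a statement about one fixed datum), but it is worth saying so. Second, the ``asymptotic isometry'' $\|\mathbf T_n g\|_{\ell^2(\mathbb O_n)}\to\|g\|_{L^2(\Omega)}$ is where the actual work hides: for a non-orthonormal refinable $\phi$ (B-splines of order $\geq2$) one does \emph{not} have $\sum_k|\langle g,\phi_{n,k}\rangle|^2=\|P_{V_n}g\|_{L^2}^2$, so the density of $\bigcup_nV_n$ alone is insufficient. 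One needs instead the approximation property of $\phi$ (partition of unity together with an $L^2$-analogue of Lemma~\ref{approximation_theorem}), which is what \cite{tv-wf} supplies. Your Banach--Steinhaus step for part (ii) is legitimate as written, since the uniform boundedness principle applies to families with varying codomains via the continuous seminorms $g\mapsto\|\mathbf A_n\mathbf T_n g\|_{\ell^2(\mathbb O_n)}$, and individual boundedness is automatic on the finite lattice $\mathbb O_n$.
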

In this way, we are left only to analyze the regularity term of the model. We conduct the liminf part (Theorem~\ref{liminf}) and the limsup part (Theorem~\ref{limsup}) of our analysis separately. 

\begin{proposition}\label{liminf}
For any variable function $u\in SBV(\Omega)$, we separately denote its regularity functional at resolution $n$ as
\begin{equation}\label{definition_R_n}
R_n(u):=h_n^2\sum_{k\in\mathbb K_n}\left|\hat{\mathbf W}_n\mathbf T_nu[k]\right|
\end{equation}
and, respectively, that at continuum (infinite) resolution as
\begin{equation}\label{definition_R}
R(u):=\int_{\Omega\setminus\Gamma_u}|\nabla u|+\alpha\cdot\mathcal H^1(\Gamma^*_u).
\end{equation}
If the parameters involved are specified such that Assumption \ref{assumption_R} is satisfied, we have
\begin{equation}
\liminf_{n\to\infty}R_n(u_n)\geq R(u)
\end{equation}
for any sequence $\{u_n\}_{n=1}^\infty\subset SBV(\Omega)$ such that $\|u_n-u\|_{SBV(\Omega)}\to0$.
\end{proposition}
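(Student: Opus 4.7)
The overall strategy is to decompose the discrete regularity $R_n(u_n)$ into a contribution from a ``far'' region, which should Riemann-sum-approximate $\int_{\Omega\setminus\Gamma_u}|\nabla u|$, and a contribution from a ``near'' region, concentrated in a thin tube about $\Gamma^*_u$, on which the truncation at level $M_n$ is always active and produces mass asymptotically equal to $\alpha\cdot\mathcal H^1(\Gamma^*_u)$. Before doing either estimate, I would invoke the curve-truncation lemma announced in the introduction (Lemma~\ref{curve_truncation}) to replace $\Gamma^*_u$, up to an arbitrarily small error in $R(u)$ and in the SBV distance, by a \emph{finite} union of disjoint $C^1$ curve segments on which the jump size $|\rho_u|$ is uniformly bounded below. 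For such a set the $H_n$-tubular neighborhood $\mathcal T_n$ satisfies the Minkowski estimate $|\mathcal T_n|=2H_n\mathcal H^1(\Gamma^*_u)+o(H_n)$, which will make the counting arguments below clean.

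For $k\in\mathbb K_n^{\mathrm{reg}}:=\{k:\mathrm{dist}(h_nk,\Gamma^*_u)>H_n+C_{\mathrm{wav}}h_n\}$, every wavelet centered at any $l\in\mathbb B_{n,k}$ lies entirely in the smooth part of $u$ for $n$ large. A standard Taylor expansion, combined with the vanishing moment condition of Assumption~\ref{assumption_R}(a) and the precise choice $\lambda_{n,j}=-2^{n-2}(\int\varphi_j)^{-1}$, gives the asymptotic identity $c_{n,j}[l]=\partial_j u(h_nl)+o(1)$ uniformly in $l$. Channelwise combination yields $|\mathbf W_n\mathbf T_nu|[l]=|\nabla u(h_nl)|+o(1)$, and because $|\nabla u|$ is bounded on compact smooth patches while $M_n\to\infty$, the truncation is inactive. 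The local maximum over the shrinking neighborhood $\mathbb B_{n,k}$ still converges to $|\nabla u(h_nk)|$ a.e., so a Fatou / Riemann-sum argument yields
\begin{equation*}
\liminf_{n\to\infty} h_n^2\sum_{k\in\mathbb K_n^{\mathrm{reg}}}\hat c_n[k]\geq\int_{\Omega\setminus\Gamma_u}|\nabla u|.
\end{equation*}

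For $k\in\mathbb K_n^{\mathrm{sing}}:=\mathbb K_n\setminus\mathbb K_n^{\mathrm{reg}}$, the key claim is that $\hat c_n[k]=M_n$ for $n$ large. Indeed, by Proposition~\ref{fine_property} the function $u$ locally resembles a pure jump of height $\rho\neq0$ across a line of normal $\nu$; after dilation by $2^n$ one computes, via the representation of $c_{n,j}$ as a wavelet pairing, that some $l\in\mathbb B_{n,k}$ receives a coefficient of magnitude at least a constant multiple of $2^n\cdot|\rho|\cdot\Delta$. Since Assumption~\ref{assumption_R}(c) gives $\Delta\geq C_2>0$ uniformly in $(\nu,a)$ and $|\rho|$ is bounded below on the reduced singular set, this lower bound is of order $2^n$, which eventually dominates $M_n=\alpha/(2H_n)=O(2^{n/2})$, so the $\min$ collapses to $M_n$. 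Combining with the Minkowski estimate gives
\begin{equation*}
\liminf_{n\to\infty}h_n^2\sum_{k\in\mathbb K_n^{\mathrm{sing}}}\hat c_n[k]\geq\liminf_{n\to\infty}M_n\cdot h_n^2\,\#\mathbb K_n^{\mathrm{sing}}=\frac{\alpha}{2H_n}\cdot 2H_n\mathcal H^1(\Gamma^*_u)=\alpha\mathcal H^1(\Gamma^*_u).
\end{equation*}

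The main obstacle, and the step I expect to consume most of the real work, is to pass from a \emph{fixed} $u$ to an SBV-convergent sequence $u_n\to u$. The robust-support lower bound must be shown to be stable under the small perturbations allowed by SBV convergence: any jump of $u_n$ whose location lies within distance $H_n+C_{\mathrm{wav}}h_n$ of $\Gamma^*_u$ still triggers the truncation at a matching lattice point, while additional spurious jumps of $u_n$ supported away from $\Gamma^*_u$ can only contribute a non-negative amount that is discarded in the lower bound. The regular-region estimate is likewise stable because the smooth part $\nabla^a u_n$ converges to $\nabla^a u$ in total-variation norm by definition of SBV convergence. Combining these two pieces completes $\liminf_n R_n(u_n)\geq R(u)$.
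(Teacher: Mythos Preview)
Your overall architecture matches the paper's: reduce $\Gamma^*_u$ to a simple curve union via Lemma~\ref{curve_truncation}, then split the lattice into a near zone and a far zone and estimate each separately. However, your regular-region estimate has a genuine gap.

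You write that on $\mathbb K_n^{\mathrm{reg}}$ ``every wavelet \ldots\ lies entirely in the smooth part of $u$'', and then invoke a Taylor expansion to get $c_{n,j}[l]=\partial_j u(h_nl)+o(1)$ uniformly, concluding that ``$|\nabla u|$ is bounded on compact smooth patches'' so the truncation is inactive. But $u\in SBV(\Omega)$ has no smooth part: away from the reduced curve $\Gamma_\epsilon$ there is still (i) the residual jump measure $\nabla^s u\lfloor_{\Gamma^*_u\setminus\Gamma_\epsilon}$, of mass $<\epsilon$, and (ii) the absolutely continuous density $\nabla^a u$, which is merely $L^1$, neither bounded nor pointwise defined. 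Hence neither the Taylor identity nor the ``truncation inactive'' claim is available. The paper supplies three separate lemmas for this zone: one bounding the contribution of the residual singular measure by $C\epsilon$ (Lemma~\ref{loss_singular}), one showing that the loss from truncating an $L^1$ function at the diverging level $M_n/\sqrt2$ tends to $0$ via absolute continuity of the integral (Lemma~\ref{loss_truncation}), and one passing from coefficients to $\int_\Omega|\nabla^a u|$ through the $L^1$ approximation Lemma~\ref{approximation_theorem} (Lemma~\ref{loss_discretization}). None of these can be replaced by a smoothness argument.

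A secondary gap concerns the near-singularity zone. The lower bound $c_n[u;\lfloor 2^n x\rfloor]\gtrsim 2^n\rho(x)$ that you quote is only a \emph{pointwise} $\liminf$ (Proposition~\ref{large_coeff_convergence}), coming from the SBV blow-up \eqref{jump_local}; it is not uniform in $x\in\Gamma^*_u$ even after restricting to $|\rho|\geq\eta$. The paper upgrades it via Egorov's theorem, and then must show that the Egorov exceptional set, together with the variable exceptional set produced when passing from $u$ to $u_n$ through the stability Lemma~\ref{coeff_sum}, does not spoil the Minkowski volume count. This is the content of the robust box-counting Lemma~\ref{area_convergence} and its auxiliaries (Lemmas~\ref{robust_intersection}--\ref{bound_intersection}); your line $|\mathcal T_n|=2H_n\mathcal H^1(\Gamma^*_u)+o(H_n)$ tacitly assumes the clean Minkowski formula but does not prove it under these $n$-dependent exclusions.
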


\begin{proposition}\label{limsup}
Define functionals $R_n$, $R$ by \eqref{definition_R_n} and \eqref{definition_R} respectively.

There exists a sequence $\{u^*_n\}_{n=1}^\infty\subset SBV(\Omega)$ such that $\|u^*_n-u\|_{SBV(\Omega)}\to0$ and 
\begin{equation}
\limsup_{n\to\infty}R_n(u^*_n)\leq R(u).
\end{equation}
\end{proposition}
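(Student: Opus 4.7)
My plan is to first reduce to a dense subclass of $SBV(\Omega)$ and then perform a direct construction on it. By Lemma \ref{curve_truncation}, every $u \in SBV(\Omega)$ can be approximated in $SBV$-norm by a sequence $\tilde{u}_m$ whose jump set $\Gamma^*_{\tilde{u}_m}$ is a simple finite union of closed $C^1$ curve segments, with $R(\tilde{u}_m) \to R(u)$. If I can show that for each such $\tilde{u}_m$ the \emph{constant} recovery sequence $u_n^{*,m} := \tilde{u}_m$ (trivially $\to \tilde{u}_m$ in $SBV$) satisfies $\limsup_{n\to\infty} R_n(\tilde{u}_m) \le R(\tilde{u}_m)$, then a standard diagonal extraction $m = m(n) \to \infty$ slowly enough produces the required recovery sequence $u_n^* := \tilde{u}_{m(n)}$ for the original $u$.

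For $\tilde{u}_m$ in the nice class, I partition the lattice $\mathbb{K}_n$ into a near layer and a far layer by setting $\mathbb{K}_n^{\mathrm{near}} := \{k \in \mathbb{K}_n : \mathrm{dist}(h_n k,\Gamma^*_{\tilde{u}_m}) \le H_n + C_{wav} h_n\}$ and $\mathbb{K}_n^{\mathrm{far}} := \mathbb{K}_n \setminus \mathbb{K}_n^{\mathrm{near}}$. The inflation $H_n + C_{wav} h_n$ is chosen so that for every $k \in \mathbb{K}_n^{\mathrm{far}}$ and every $l \in \mathbb{B}_{n,k}$, the wavelet $\psi_{j,n,l}$ is supported entirely in $\Omega \setminus \Gamma^*_{\tilde{u}_m}$. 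On the far layer, the integration-by-parts identity $\psi_j = \partial_j \varphi_j$ together with the weighting $\lambda_{n,j} = -2^{n-2}(\int \varphi_j)^{-1}$ (exactly as exploited in the liminf argument and in \cite{tv-wf,wf-pde}) makes each channel of $\mathbf{W}_n \mathbf{T}_n \tilde{u}_m$ a regularized sample of the corresponding partial derivative of $\tilde{u}_m$ at $h_n l$; the outer local-maximum over $\mathbb{B}_{n,k}$ does not spoil this because $H_n \to 0$ and $\nabla\tilde{u}_m$ is continuous off the finitely many smooth pieces. A Riemann-sum argument (after a routine $L^1$ approximation of $\nabla\tilde{u}_m$ by continuous functions on each piece) then yields
$$h_n^2 \sum_{k \in \mathbb{K}_n^{\mathrm{far}}} \bigl|\hat{\mathbf{W}}_n \mathbf{T}_n \tilde{u}_m[k]\bigr| \longrightarrow \int_{\Omega \setminus \Gamma^*_{\tilde{u}_m}} |\nabla \tilde{u}_m|.$$

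On the near layer I use only the pointwise truncation $\bigl|\hat{\mathbf{W}}_n \mathbf{T}_n \tilde{u}_m[k]\bigr| \le M_n = \alpha/(2H_n)$. Since $\Gamma^*_{\tilde{u}_m}$ is a finite union of $C^1$ curves, its $(H_n + C_{wav} h_n)$-tubular neighbourhood has Lebesgue measure $(1+o(1)) \cdot 2 H_n \mathcal{H}^1(\Gamma^*_{\tilde{u}_m})$ by Minkowski content, so
$$\#\,\mathbb{K}_n^{\mathrm{near}} \le (1+o(1)) \cdot \frac{2 H_n \mathcal{H}^1(\Gamma^*_{\tilde{u}_m})}{h_n^2}, \qquad h_n^2 \sum_{k \in \mathbb{K}_n^{\mathrm{near}}} \bigl|\hat{\mathbf{W}}_n \mathbf{T}_n \tilde{u}_m[k]\bigr| \le (1+o(1)) \cdot \alpha \mathcal{H}^1(\Gamma^*_{\tilde{u}_m}).$$
Adding the two contributions gives $\limsup_{n\to\infty} R_n(\tilde{u}_m) \le R(\tilde{u}_m)$, after which the diagonal argument closes the proof.

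The main technical obstacle I foresee lies in the transition between the two layers. Because $\hat{\mathbf{W}}_n$ takes a local maximum over $\mathbb{B}_{n,k}$, a lattice point $k$ geometrically far from $\Gamma^*_{\tilde{u}_m}$ can still receive a very large coefficient from a neighbour $l \in \mathbb{B}_{n,k}$ whose wavelet support reaches the jump; the inflation $H_n + C_{wav} h_n$ is designed precisely to absorb this smearing into $\mathbb{K}_n^{\mathrm{near}}$, and one must verify that this modest enlargement contributes only a $o(H_n/h_n^2)$ correction to the near count (this is where the Minkowski-content regularity of the $C^1$ pieces enters). Additional care is needed at endpoints and isolated self-intersections of the finitely many curves composing $\Gamma^*_{\tilde{u}_m}$; being $\mathcal{H}^0$-finite, each such location contributes at most $O(H_n^2/h_n^2)$ lattice points, whose contribution to $R_n$ is $O(h_n^2 \cdot H_n^2/h_n^2 \cdot M_n) = O(H_n) = o(1)$ and hence does not disturb the asymptotics.
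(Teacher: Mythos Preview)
Your first step misreads Lemma \ref{curve_truncation}. That lemma approximates only the jump \emph{set} $\Gamma_u$ by a simple union $\Gamma_\epsilon\subset\Gamma_u$; it does not produce any function $\tilde u_m$ close to $u$ in $SBV$-norm whose jump set is that simple union. Manufacturing such an approximating function is precisely the hard part of the limsup direction, and it is not ``routine'': one must remove the residual singular variation $\nabla^s u$ on $\Gamma_u\setminus\Gamma_\epsilon$ without destroying the jump on $\Gamma_\epsilon$ and without creating large $BV$-error. The paper devotes Subsection \ref{section_tightness} to this, using a non-homogeneous mollification $u*'_m\eta$ with radius $r_m(x)\sim\mathrm{dist}(x,\Gamma_m)$ (Proposition \ref{sequence_properties}, following \cite{de2017approximation}) so that the jump on $\Gamma_m$ is exactly preserved while everything else is smoothed.

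A second gap sits in your far-layer estimate. You assert that ``$\nabla\tilde u_m$ is continuous off the finitely many smooth pieces'' and then invoke a Riemann-sum argument ``after a routine $L^1$ approximation by continuous functions''. Neither is justified: an $SBV$ function with simple jump set still has, in general, only an $L^1$ absolutely continuous gradient, and the operator $\hat{\mathbf W}_n$ involves a local maximum over $\sim(H_n/h_n)^2$ coefficients, which is \emph{not} stable under $L^1$ perturbation of the gradient. For the local-max to be asymptotically harmless one needs uniform continuity of $\nabla^a\tilde u_m$ on $\bar\Omega\setminus\tilde\Gamma$ (so that $|c_n[l]-c_n[k]|<\epsilon$ uniformly for $l\in\mathbb B_{n,k}$), and for the intermediate layer $\tilde\Gamma^\epsilon\setminus\tilde\Gamma_n$ one needs $\nabla^a\tilde u_m$ bounded; this is exactly the hypothesis of Lemma \ref{limsup_regular}. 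The paper secures both properties by a further domain-deformation surgery near $\Gamma_m$ that excises the zone where the mollified gradient may blow up, producing $u''_m$ with $\|\nabla^a u''_m\|_{L^\infty}<\infty$. Only then does Lemma \ref{limsup_regular} yield $\limsup_n R_n(u''_m)\le R(u)+o_m(1)$, after which your diagonal extraction is legitimate.
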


The establishment of Proposition \ref{liminf} and Proposition \ref{limsup} necessitates a comprehensive understanding of the image structure (particularly its singularity formation) and therefore we organize all relevant technical details separately into the two sections that follow. Here we first show that our major result, Theorem \ref{main_theorem}, can be implied by them together.

\begin{proof}[Proof of Theorem \ref{main_theorem}]
Assume $u\in SBV(\Omega)$. By embedding theorem, we know that $u\in L^2(\Omega)$. It follows from Proposition \ref{limfid} that given any $\epsilon>0$, there is an $N_1\in\mathbb Z^+$ such that $|F_n(u)-F(u)|<\epsilon$ for any $n\geq N_1$. Secondly, for this $\epsilon$, there is a $\delta>0$ such that $|F_n(u')-F_n(u)|<\epsilon$ for any $\|u'-u\|_{SBV(\Omega)}<\delta$ and any $n\geq1$ by equi-continuity.

For arbitrary $u_n\to u$ in $SBV(\Omega)$, there exists an $N_2\in\mathbb Z^+$ such that $\|u_n-u\|_{SBV(\Omega)}<\delta$ for any $n\geq N_2$. Consequently, if $n\geq\max\{N_1,N_2\}$, we have
\begin{equation}
|F_n(u_n)-F(u)|\leq|F_n(u_n)-F_n(u)|+|F_n(u)-F(u)|\leq2\epsilon,
\end{equation}
that is, $\lim_{n\to\infty}F_n(u_n)=F(u)$. Together with Proposition \ref{liminf}, we have $\liminf_{n\to\infty}E_n(u_n)\geq E(u)$, noting that $E_n(u_n)=R_n(u_n)+F_n(u_n)$ and $E(u)=R(u)+F(u)$ respectively.

By Proposition \ref{limsup}, there exists a sequence $u_n^*\to u$ in $SBV(\Omega)$ such that $\limsup_{n\to\infty}R_n(u_n^*)\leq R(u)$. This sequence, in particular, also has $\lim_{n\to\infty}F_n(u_n^*)=F(u)$, and consequently we conclude that $\limsup_{n\to\infty}E_n(u_n^*)\leq E(u)$.

\end{proof}

\subsection{Toy models and heuristics} 
{We begin with some straightforward calculations leading to the master key identity

\begin{equation}
    \label{coeff_local}
    c_{n,j}[u;k] = \left<\partial_ju,I_j^{-1}\varphi^1_{j,n-1,k}\right>,
\end{equation}
where $j=1,2$ in particular. This identifies wavelet coefficients (cf. \eqref{wavelet_coeff}) as derivative samplings \cite{tv-wf,wf-pde}, and will be applied repeatedly throughout our argument. 

By definition, $\mathbf T_nu=2^n\left<u,\phi_{n,k}\right>$, therefore it follows
\begin{eqnarray}
c_{n,j}[u;k]&=&\lambda_{n,j}\cdot\sum_{l\in\mathbb Z^2}h_j[k-l]\cdot2^n\left<u,\phi_{n,l}\right>\\
&=&\lambda_{n,j}\cdot2^n\left<u,\sum_{l\in\mathbb Z^2}h_j[k-l]\phi_{n,l}\right>\\
&=&\lambda_{n,j}\cdot2^n\left<u,\psi_{j,n-1,k}\right>\left(=-I_j^{-1}2^{n-1}\left<u,\psi^1_{j,n-1,k}\right>\right)\\
&=&\lambda_{n,j}\cdot2\left<u,\partial_j\varphi_{j,n-1,k}\right>\\
&=&\lambda_{n,j}\cdot(-2)\left<\partial_ju,\varphi_{j,n-1,k}\right>\\
&=&\left<\partial_ju,I_j^{-1}\varphi^1_{j,n-1,k}\right>,
\end{eqnarray}
where we have recalled that $\lambda_{n,j}:=-2^{n-2}I_j^{-1}$. 
Here it is worth noting that the derivatives $\partial_j u$ involved are generally distributional in nature, consequently, the second last identity is validated as a dual pairing (dualization) rather than as an inner product.  
}

In order to quantify the difference between functions defined on a continuum domain and their discrete samplings, we also need the following approximation lemma. 
This lemma indicates that wavelet coefficients do recover the gradient of the function asymptotically in the sense of Lebesgue integration, in particular, how discrete samplings are related to their continuum counterpart when resolution goes to infinity.

\begin{lemma}[\cite{tv-wf}]
\label{approximation_theorem}
    Assume $f\in L^1(\Omega)$. If $\bar\varphi_{n,k}(x)=2^{2n}\varphi(2^nx-k)$ and $\chi_{n,k}(x)=\chi(2^nx-k)$, where $\chi$ satisfies partition of unity and $supp(\chi)\subset supp(\bar\varphi)$, then we have
    \begin{equation}
    \lim_{n\to\infty}\left\|\sum_{k\in\mathbb O_n,supp(\bar\varphi_{n,k})\subset\bar\Omega}\left<f,\bar\varphi_{n,k}\right>\chi_{n,k}-f\right\|_{L^1(\Omega)}=0.
    \end{equation}
    
\end{lemma}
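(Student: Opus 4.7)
The plan is to establish the convergence by a standard three-step approximation argument: rewrite the operator as localized averaging, verify a uniform-in-$n$ $L^1$-operator bound, and upgrade pointwise convergence from continuous test functions to all of $L^1(\Omega)$ by density.

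First, I would change variables in each pairing, substituting $y=2^nx-k$, to rewrite
\begin{equation*}
\langle f,\bar\varphi_{n,k}\rangle=\int_{\mathbb R^2} f\bigl(2^{-n}(y+k)\bigr)\varphi(y)\,dy,
\end{equation*}
so that each coefficient is a weighted local average of $f$ over a cell of width $O(2^{-n})$ centred at $2^{-n}k$. Denote the operator in question by $T_nf(x):=\sum_{k}\langle f,\bar\varphi_{n,k}\rangle\chi_{n,k}(x)$, where the sum runs over indices $k$ with $\mathrm{supp}(\bar\varphi_{n,k})\subset\bar\Omega$. Because $\mathrm{supp}(\varphi)$ and $\mathrm{supp}(\chi)$ are both compact, the operator has a finite-overlap structure: at each $x$ only $O(1)$ terms are active, and each coefficient depends only on $f$ restricted to a bounded $O(2^{-n})$-neighbourhood of $2^{-n}k$.

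Second, I would use this finite-overlap structure, together with the volume balance $\|\bar\varphi_{n,k}\|_{L^\infty}\cdot\mathrm{vol}(\mathrm{supp}(\chi_{n,k}))=O(1)$, to establish a uniform $L^1$-bound
\begin{equation*}
\|T_nf\|_{L^1(\Omega)}\leq C\|f\|_{L^1(\Omega)},
\end{equation*}
with $C$ independent of $n$ and depending only on $\|\varphi\|_{\infty}$, $\|\chi\|_{L^1}$, and the geometric overlap count. This equi-boundedness is what enables an $\epsilon$-approximation argument below.

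Third, I would verify the target convergence on the dense subclass $C_c(\Omega)\subset L^1(\Omega)$. For $g\in C_c(\Omega)$, uniform continuity gives $\langle g,\bar\varphi_{n,k}\rangle\to g(2^{-n}k)\cdot\int\varphi$ uniformly in admissible $k$; together with the partition-of-unity identity $\sum_k\chi_{n,k}\equiv 1$ and the MRA normalization $\int\varphi=1$, this yields $T_ng\to g$ uniformly on compact subsets of $\Omega$, hence in $L^1(\Omega)$ since $g$ vanishes near $\partial\Omega$. The general case follows via the triangle inequality
\begin{equation*}
\|T_nf-f\|_{L^1}\leq\|T_n(f-g)\|_{L^1}+\|T_ng-g\|_{L^1}+\|g-f\|_{L^1},
\end{equation*}
by first choosing $g\in C_c(\Omega)$ with $\|f-g\|_{L^1}<\epsilon$ and then $n$ large enough to absorb the middle term; the first term is controlled by $C\epsilon$ thanks to the uniform $L^1$-bound from the previous step. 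The principal technical obstacle is the treatment of cells straddling $\partial\Omega$, which are dropped from the summation under the condition $\mathrm{supp}(\bar\varphi_{n,k})\subset\bar\Omega$: their union has Lebesgue measure $O(2^{-n})$ and absolute continuity of the Lebesgue integral of $|f|$ confines the boundary contribution to a negligible size, but one must carefully verify that the finite-overlap constant in the $L^1$-bound remains genuinely independent of $n$ for both interior and truncated cells.
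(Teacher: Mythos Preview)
The paper does not supply its own proof of this lemma: it is quoted verbatim from the reference \cite{tv-wf} and used as a black box throughout (e.g.\ in the proofs of Proposition~\ref{prop:continuous}, Lemma~\ref{limsup_regular}, Lemma~\ref{loss_truncation}, and Lemma~\ref{loss_discretization}). So there is nothing in the present paper to compare against.

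Your argument is the standard one and is essentially correct. A few small points are worth tightening. First, the normalization $\int\varphi=1$ is not stated in the lemma itself; in the paper's applications it is enforced by writing $\bar\varphi_{j,n,k}:=I_j^{-1}\varphi^1_{j,n,k}$, so you should make that hypothesis explicit rather than appeal to ``MRA normalization''. Second, the line ``$\langle g,\bar\varphi_{n,k}\rangle\to g(2^{-n}k)\cdot\int\varphi$ uniformly in admissible $k$'' is awkward because $2^{-n}k$ moves with $n$; what you actually need (and what your change of variables gives directly) is that for every $x\in\mathrm{supp}(\chi_{n,k})$ one has $\bigl|\langle g,\bar\varphi_{n,k}\rangle-g(x)\bigr|\le \omega_g(C\,2^{-n})\cdot\|\varphi\|_{L^1}$, where $\omega_g$ is the modulus of continuity of $g$ and $C$ depends only on $\mathrm{diam}(\mathrm{supp}\,\varphi)$. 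Combined with $\sum_k\chi_{n,k}\equiv1$ this gives $\|T_ng-g\|_{L^\infty}\to0$ on the interior. Third, your boundary discussion is slightly imprecise: the dropped cells do not ``straddle'' $\partial\Omega$ but rather have $\mathrm{supp}(\bar\varphi_{n,k})\not\subset\bar\Omega$; the region where $\sum_k\chi_{n,k}<1$ is contained in an $O(2^{-n})$-collar of $\partial\Omega$, and it is the absolute continuity of $\int|f|$ on that collar (not any overlap constant) that kills the residual. With these cosmetic fixes the proof goes through.
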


As a basis for understanding further our core concept of (wavelet frame based) singularity modeling, we first consider with the help of the above lemma an ideal case, in which the argument function $u$ is simply regular on the whole domain $\Omega$. 
\begin{proposition}
    \label{prop:continuous}
    Suppose $u\in C^1(\Omega)$ and its gradient $\nabla u$ is uniformly continuous on $\Omega$,
    
    we have point-wise convergence $\lim_{n\to\infty}E_n(u)=E(u)$. 
\end{proposition}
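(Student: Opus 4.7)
Since $u\in C^1(\Omega)$ has empty jump set $\Gamma_u=\emptyset$, the continuum regularity reduces to $R(u)=\int_\Omega|\nabla u|$ and the target limit is $E(u)=\int_\Omega|\nabla u|+\int_\Omega|Au-f|^2$. Proposition \ref{limfid}(i) immediately provides $F_n(u)\to F(u)$, so the whole problem reduces to showing $R_n(u)\to\int_\Omega|\nabla u|$.

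For the regularity part, I would begin by invoking the master identity \eqref{coeff_local} to interpret each wavelet coefficient $c_{n,j}[k]$ (for $j=1,2$) as a local average of $\partial_j u$ against a unit-mass, compactly supported kernel of width $\sim h_n$ centered near $h_n k$. Since $\nabla u$ is uniformly continuous on $\Omega$, these averages are uniformly bounded by $\|\nabla u\|_{L^\infty}$ and satisfy $c_{n,j}[k]=\partial_j u(h_n k)+o(1)$ uniformly in $k$, with the error controlled by the modulus of continuity of $\nabla u$; hence $c_n[k]\to|\nabla u(h_n k)|$ uniformly. Moreover, since $c_n[k]\le C\|\nabla u\|_{L^\infty}$ while $M_n=\alpha/(2H_n)=(\alpha/2)\cdot 2^{n/2}\to\infty$, the truncation in $\hat c_n[k]$ becomes inactive at every lattice site for all sufficiently large $n$, leaving
\begin{equation*}
\hat c_n[k]=\max_{l\in\mathbb B_{n,k}}c_n[l]
\end{equation*}
by the choice $r=\infty$. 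Because $\mathbb B_{n,k}$ has vanishing radius $H_n$, a further application of uniform continuity shows that this maximum still equals $|\nabla u(h_n k)|+o(1)$ uniformly in $k$.

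The sum $R_n(u)=h_n^2\sum_{k\in\mathbb K_n}\hat c_n[k]$ is then a Riemann sum (pixel area $h_n^2$) whose sampled integrand converges uniformly to $|\nabla u|$ on $\Omega$; applying either Lemma \ref{approximation_theorem} with $f=|\nabla u|$ or a direct dominated-convergence argument on the boundary-trimmed lattice $\mathbb K_n$ (whose complement in $\mathbb O_n$ has area $O(h_n)$ and therefore contributes negligibly since the integrand is bounded) yields $R_n(u)\to\int_\Omega|\nabla u|$. Combining this with the fidelity convergence gives $E_n(u)\to E(u)$. The main (mild) obstacle is ensuring that any extra channels $j\ge 3$ in $\Psi$ do not contaminate the identification $c_n[k]\to|\nabla u(h_n k)|$: Assumption \ref{assumption_R} directly controls only $\psi_1,\psi_2$, and one should separately verify---using the vanishing-moment/anti-derivative structure of the remaining $\psi_j$ together with the weighting $\lambda_{n,j}=-2^{n-2}/\int\varphi_j$---that the contribution of those higher channels to $c_n[k]$ is consistent with the $|\nabla u|$ limit (in particular, is asymptotically absorbed into the uniform-continuity estimate above).
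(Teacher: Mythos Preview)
Your proposal is correct and follows essentially the same route as the paper: reduce to the regularity term via Proposition~\ref{limfid}, observe that the truncation at $M_n\to\infty$ is inactive since $c_n[u;k]$ stays bounded by $\|\nabla u\|_{L^\infty}$, use uniform continuity of $\nabla u$ on the vanishing-radius neighbourhood $\mathbb B_{n,k}$ to show $|\hat c_n[u;k]-c_n[u;k]|<\epsilon$ uniformly in $k$, and then invoke Lemma~\ref{approximation_theorem} to pass from the Riemann sum $h_n^2\sum c_n[u;k]$ to $\int_\Omega|\nabla u|$. Your concern about channels $j\ge3$ is a fair observation about the paper's exposition rather than a defect of your argument: the weights $\lambda_{n,j}$ in Theorem~\ref{main_theorem} are specified only through the anti-derivatives $\varphi_j$ of Assumption~\ref{assumption_R}(a), so the model effectively uses just $j=1,2$ (and the paper's own proof, as well as Lemma~\ref{coeff_sum} and the off-singularity estimates, silently work under this convention).
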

\begin{proof}
Since the fidelity term's convergence has been established already in \Cref{limfid}, we only need to prove $\lim_{n\to\infty}R_n(u)=R(u)$. The key observation here is that the (neighbourhood) maximal operator does not bring about significantly enlarged coefficients if $u$ itself is sufficiently regular. 

By assumption, the gradient $\nabla u$ is uniformly continuous, on $\Omega$, that is, for any $\epsilon>0$ there exists a $\delta>0$ such that $|x-y|<\delta$ ($x,y\in\Omega$) implies $|\nabla u(x)-\nabla u(y)|<\epsilon$. For all sufficiently large $n$, that is, such that $H_n<\delta$, we have $|c_n[u;l]-c_n[u;k]|<\epsilon$ for any $k\in\mathbb K_n$ and any $l\in\mathbb B_{n,k}$. Note that for sufficiently large $n$, we always have $c_n[u;k]<M_n$ as $u$ is continuous, and $\hat c_n[u;k]=\max_{l\in\mathbb B_{n,k}}c_n[u;k]$. Consequently, $|\hat c_n[u;k]-c_n[u;k]|<\epsilon$ for any $k\in\mathbb K_n$, and by summing them together,
\begin{equation}
\left|h_n^2\sum_{k\in\mathbb K_n}\hat c_n[u;k]-h_n^2\sum_{k\in\mathbb K_n}c_n[u;k]\right|<\epsilon.
\end{equation}
On the other hand, by Lemma \ref{approximation_theorem} and the fact $\Gamma_u=\emptyset$, we have
\begin{equation}
\lim_{n\to\infty}\left(h_n^2\sum_{k\in\mathbb K_n}c_n[u;k]\right)=\int_\Omega|\nabla u|=R(u).
\end{equation}
Put them together and we obtain our desired convergence. 

\end{proof}

We complete this section with the following estimation concerning the regularity energy of a function $u$ that is static with respect to $n$ but is allowed to possess some well-formed singularity. In Section \ref{proof_liminf}, we will eventually analyze problems involving varying $u_n$ with general singularity by transforming them into this prototype.

\begin{lemma}\label{limsup_regular}
    If the function $u \in SBV(\Omega)$ has a boundary extension toward $\partial \Omega$ that belongs to $C^1(\bar{\Omega}\setminus\tilde\Gamma)$, and its gradient $\nabla u$ is bounded on $\Omega\setminus\tilde\Gamma$ (where $\tilde\Gamma$ is a finite union of closed curve segments without self- and mutual intersections), we have
    \begin{equation}
    \limsup_{n\to\infty}R_n(u)\leq\int_\Omega|\nabla^au|+\alpha\cdot\mathcal H^1(\tilde\Gamma).
    \end{equation}
    \end{lemma}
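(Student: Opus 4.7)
My plan is to split the sampling lattice $\mathbb K_n$ according to the distance of $h_n k$ from the singular set $\tilde\Gamma$ and treat each region by a complementary mechanism. Near $\tilde\Gamma$ the truncation $M_n$ caps the potentially large wavelet coefficients, while away from $\tilde\Gamma$ the function $u$ is uniformly $C^1$ (by the boundary extension hypothesis) and the reasoning of \Cref{prop:continuous} can be invoked almost verbatim. After summing the contributions, the near piece will yield $\alpha\mathcal H^1(\tilde\Gamma)$ and the remaining piece $\int_\Omega|\nabla^a u|$.

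Fix a small $\epsilon>0$, set $\delta_n:=H_n+C_{wav}h_n$, and denote by $\tilde\Gamma_\rho$ the closed $\rho$-tube around $\tilde\Gamma$. I partition $\mathbb K_n$ into
$S_n^N=\{k:\mathrm{dist}(h_nk,\tilde\Gamma)\le\delta_n\}$,
$S_n^M=\{k:\delta_n<\mathrm{dist}(h_nk,\tilde\Gamma)\le\epsilon\}$,
and $S_n^F=\{k:\mathrm{dist}(h_nk,\tilde\Gamma)>\epsilon\}$.
On $S_n^N$ simply bound $\hat c_n[k]\le M_n=\alpha/(2H_n)$. Because $\tilde\Gamma$ is a finite union of closed $C^1$-arcs with no self- or mutual-intersection, the tubular neighbourhood formula gives $|\tilde\Gamma_\rho|=2\rho\,\mathcal H^1(\tilde\Gamma)+O(\rho^2)$ for $\rho$ below the reach of $\tilde\Gamma$, so $\#S_n^N\le|\tilde\Gamma_{\delta_n}|/h_n^2+O(1/h_n)$. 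Using $h_n/H_n\to 0$ (hence $\delta_n/H_n\to 1$) the contribution of $S_n^N$ satisfies
\begin{equation*}
h_n^2\cdot\#S_n^N\cdot M_n\;\le\;\alpha\mathcal H^1(\tilde\Gamma)\cdot\frac{\delta_n}{H_n}+O(h_n/H_n)\;=\;\alpha\mathcal H^1(\tilde\Gamma)+o(1).
\end{equation*}

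For $k\in S_n^M$, every $l\in\mathbb B_{n,k}$ satisfies $\mathrm{dist}(h_nl,\tilde\Gamma)>\delta_n-H_n=C_{wav}h_n$, so the support of $\varphi^1_{j,n-1,l}$ is disjoint from $\tilde\Gamma$; applying the master identity \eqref{coeff_local} pairs $\partial_j u\in L^\infty(\Omega\setminus\tilde\Gamma)$ against an $L^1$-normalised sampler, yielding $|\hat c_n[k]|\le C\|\nabla u\|_{L^\infty(\Omega\setminus\tilde\Gamma)}$. The resulting contribution is at most $C\|\nabla u\|_\infty |\tilde\Gamma_\epsilon|\le C'\epsilon\,\mathcal H^1(\tilde\Gamma)$, which is $o_\epsilon(1)$ uniformly in $n$. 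On $S_n^F$ the function $u$ is $C^1$ on a fixed neighbourhood of $\mathbb B_{n,k}$ for $n$ large, so the argument in the proof of \Cref{prop:continuous} applies on $\Omega\setminus\tilde\Gamma_{\epsilon/2}$: uniform continuity of $\nabla u$ there gives $|\hat c_n[k]-c_n[k]|\to 0$ uniformly, and \Cref{approximation_theorem} gives $h_n^2\sum_{k\in S_n^F}c_n[k]\to\int_{\Omega\setminus\tilde\Gamma_\epsilon}|\nabla u|$. Assembling the three regions, passing first to $\limsup_{n\to\infty}$ and then $\epsilon\to 0^+$, and recalling that $\int_{\Omega\setminus\tilde\Gamma}|\nabla u|=\int_\Omega|\nabla^a u|$ (since $\tilde\Gamma$ has Lebesgue measure zero), delivers the claimed bound.

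The main obstacle I anticipate is securing the sharp prefactor $2$ in the Minkowski content of $\tilde\Gamma$, since any looser constant would over-inflate the near-region contribution beyond $\alpha\mathcal H^1(\tilde\Gamma)$; this is why the disjointness hypothesis on the arcs of $\tilde\Gamma$ is used and why one must choose $n$ large enough that $\delta_n$ lies below the reach of $\tilde\Gamma$. A secondary technical nuisance, which I expect to dispatch with only $O(h_n)$ corrections, is the handling of lattice points close to $\partial\Omega$ or to the endpoints of the arcs in $\tilde\Gamma$; the boundary extension hypothesis in the statement is precisely what localises these effects and makes them negligible.
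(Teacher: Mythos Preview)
Your proposal is correct and follows essentially the same approach as the paper: both split $\mathbb K_n$ into the three regions $\{\mathrm{dist}(h_nk,\tilde\Gamma)\le H_n+C_{wav}h_n\}$, $\{H_n+C_{wav}h_n<\mathrm{dist}\le\epsilon\}$, $\{\mathrm{dist}>\epsilon\}$, bound the near region by $M_n$ times the tube area to obtain $\alpha\mathcal H^1(\tilde\Gamma)$, bound the middle region by the gradient sup-norm times $\mathcal H^2(\tilde\Gamma^\epsilon)$, and use the uniform-continuity argument of \Cref{prop:continuous} together with \Cref{approximation_theorem} on the far region, finally letting $\epsilon\to0$. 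Your explicit invocation of the Minkowski-content asymptotics $|\tilde\Gamma_\rho|=2\rho\,\mathcal H^1(\tilde\Gamma)+O(\rho^2)$ for the near region is precisely what the paper uses implicitly when it asserts $\limsup_{n}2^{-2n}\#(\mathbb K_n\cap 2^n\tilde\Gamma_n)\cdot\frac{\alpha}{2H_n}\le\alpha\mathcal H^1(\tilde\Gamma)$.
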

\begin{proof}
Denote $B=\sup_{x\in\Omega\setminus\tilde\Gamma}|\nabla u(x)|<\infty$. Let $\epsilon>0$, and $\tilde\Gamma^\epsilon$, $\tilde\Gamma_n$ denote tubular neighbourhoods of $\tilde\Gamma$ of width $\epsilon$ and $H_n+C_{wav}\cdot2^{-n}$ respectively, where $C_{wav}$ is defined in Assumption \ref{assumption_R}. In particular, $supp(\psi_{j,n,k})\subset B(2^{-n}k;C_{wav}\cdot2^{-n})$, and particularly, for any $k\in\mathbb K_n$ such that $2^{-n}k\in\tilde\Gamma_n$, we have $supp(\psi_{j,n,k})\cap\tilde\Gamma=\emptyset$ thus $c_{n,j}[u;k]\leq B$ and $c_n[u;k]\leq\sqrt2 B$.

Since $\nabla u=\nabla^au$ on $\Omega\setminus\tilde\Gamma$, coefficients 

$c_{n,j}[u;k]=c^a_{n,j}[u;k]$
for $k\in\mathbb K_n\setminus2^n\tilde\Gamma_n$, where

\begin{equation}
c^a_{n,j}[u;k]:=-2\lambda_{n,j}\left<\partial^a_ju,\varphi_{j,n-1,k}\right>
\end{equation}
is defined as an analogue of wavelet coefficients (one that excludes the singularity effects), which will be frequently applied later in our analysis. Moreover, $\hat c_n[u;k]\leq M_n=\frac{\alpha}{2H_n}$ for any $k\in\mathbb K_n$. Since $\nabla u$ (by extension) is continuous on $\bar\Omega\setminus\tilde\Gamma$, it follows that on $\bar\Omega\setminus\tilde\Gamma^\epsilon$, for the above specified $\epsilon>0$ there exists $\delta>0$ such that $|\nabla u(x)-\nabla u(x')|<\epsilon$ whenever $x,x'\in\Omega\setminus\tilde\Gamma^\epsilon$ and $|x-x'|\leq\delta$. In what follows, we assume $n\geq N$ where $N$ is taken such that $2H_N<\delta$ is satisfied, then for any $k\in\mathbb K_n\setminus2^n\tilde\Gamma_n$, we have $\left|\max_{l\in\mathbb B_{n,k}} c^a_n[u;l]-c^a_n[u;k]\right|<\epsilon$ (where $c^a_n[u;l]:=(\sum_{j}c^a_{n,j}[u;l]^2)^{1/2}$) and therefore it immediately follows that \begin{equation}\label{eq:hateps}\hat c_n[u;k]<c^a_n[u;k]+\epsilon.\end{equation}
    
We split the target quantity into three parts,

\begin{equation}
\begin{split}
\limsup_{n\to\infty}R_n(u)
&\leq\limsup_{n\to\infty}\left(2^{-2n}\sum_{k\in\mathbb K_n\setminus2^n\tilde\Gamma^\epsilon}\hat c_n[u;k]\right)\\
    &\ \ \ \ +\limsup_{n\to\infty}\left(2^{-2n}\sum_{k\in \mathbb K_n\cap(2^n\tilde\Gamma^\epsilon\setminus2^n\tilde\Gamma_n)}\hat c_n[u;k]\right)
    \\ & +\limsup_{n\to\infty}\left(2^{-2n}\sum_{k\in\mathbb K_n\cap2^n\tilde\Gamma_n}\hat c_n[u;k]\right),
\end{split}
\end{equation}
and estimate them separately. We first of all note that 
\begin{equation}
\begin{split}
    \limsup_{n\to\infty}\left(2^{-2n}\sum_{k\in\mathbb K_n\setminus2^n\tilde\Gamma^\epsilon}\hat c_n[u;k]\right) 
    &\le \limsup_{n\to\infty}\left(2^{-2n}\sum_{k\in\mathbb K_n\setminus2^n\tilde\Gamma^\epsilon}(c^a_n[u;k]+\epsilon)\right)\\
    &\leq \limsup_{n\to\infty}\left(2^{-2n}\sum_{k\in\mathbb K_n}c^a_n[u;k]\right)+\epsilon,
\end{split}
\end{equation}

which in turn is equal to $\int_{\Omega}|\nabla^au|+\epsilon$ by Lemma \ref{approximation_theorem}.
Secondly, applying gradient bound $c_n[u;k]\leq\sqrt2B$ at the off-singularity domain $\mathbb K_n \cap (2^n\tilde\Gamma^\epsilon\setminus2^n\tilde\Gamma_n)$, we have 
\begin{equation}
\begin{split}
\limsup_{n\to\infty}\left(2^{-2n}\sum_{k\in \mathbb K_n\cap(2^n\tilde\Gamma^\epsilon\setminus2^n\tilde\Gamma_n)}\hat c_n[u;k]\right) & \le \limsup_{n\to\infty}\left(2^{-2n}\sum_{k\in\mathbb K_n\cap(2^n\tilde\Gamma^\epsilon\setminus2^n\tilde\Gamma_n)}\sqrt2B\right) \\ &\le \sqrt2B\cdot\mathcal H^2(\tilde\Gamma^\epsilon).
\end{split}
\end{equation}
Finally, at the near-singularity domain $\mathbb K_n\cap2^n\tilde\Gamma_n$, we have 
\begin{equation}
    \limsup_{n\to\infty}\left(2^{-2n}\sum_{k\in\mathbb K_n\cap2^n\tilde\Gamma_n}\hat c_n[u;k]\right) \le \limsup_{n\to\infty}\left(2^{-2n}\sum_{k\in\mathbb K_n\cap2^n\tilde\Gamma_n}\frac{\alpha}{2H_n}\right) \le \alpha\cdot\mathcal H^1(\tilde\Gamma).
\end{equation}
Combining the above arguments, we totally have 
\begin{equation}
    \limsup_{n\to\infty}R_n(u)\le \int_{\Omega}|\nabla^au|+\alpha\cdot\mathcal H^1(\tilde\Gamma)+\left(\epsilon+\sqrt2B\cdot\mathcal H^2(\tilde\Gamma^\epsilon)\right).
\end{equation}

    Since the parameter $\epsilon>0$ can be taken arbitrarily and $\lim_{\epsilon\to0}\mathcal H^2(\tilde\Gamma^\epsilon)=0$, our proof is concluded immediately.
\end{proof}

\section{{Asymptotic analysis of the regularity term}}\label{proof_liminf}

{This section fulfills two aims.} In what follows, we first present a series of results that eventually lead to \Cref{liminf}, providing the asymptotic lower-bound $E(u)$ of the discrete regularity energy. Finally, in Subsection \ref{section_tightness} we establish Proposition \ref{limsup}, which validates $E(u)$ as a $\Gamma$-limit by virtue of its tightness.

Frequently revisited notations in this section are summarized for reference as follows: 

\begin{itemize}
    \item[-] Wavelet coefficients $c_n[u;k]=|\mathbf W_n\mathbf T_nu|[k]$, where $\mathbf W_n$ and $\mathbf T_n$ denote respectively the discrete wavelet transform and the sampling operator that are defined in Section~\ref{model_discrete}
    .
    \item[-] Jump (magnitude) $\rho(x) = |u^+(x) - u^-(x)|$ as is characterized by Proposition \ref{fine_property}.
    \item[-] The neighbourhood radius $H_n = 2^{-n/2}$ and the threshold $M_n = \alpha / (2H_n)$.
\end{itemize}

Our key idea here is to construct a suitable truncation of $u$'s jump singularity, which eliminates all small (but potentially ubiquitously distributed) jumps and preserves only a certain chunk of it. In turn, we estimate the asymptotic lower-bound of its energy that occurs close to, respectively, far away from this curve-supported singularity. 

As our analysis involves varying (converging) sequences in $SBV(\Omega)$, we need a certain sort of stability control over wavelet coefficients. In particular, we have the following proposition.

\begin{lemma}[Stability of plain coefficients]
\label{coeff_sum}
    
    For any $u',u\in SBV(\Omega)$, we have difference measurement
    \begin{equation}
    2^{-2n}\sum_{k\in\mathbb K_n}|c_n[u';k]-c_n[u;k]|\leq C\|u'-u\|_{SBV(\Omega)},
    \end{equation}
    where $C>0$ denotes a constant (that depends on the wavelets chosen).
    \end{lemma}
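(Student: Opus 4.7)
The plan is to reduce the claim, via linearity and the reverse triangle inequality, to a channel-wise $BV$ bound on $u'-u$, which then follows from the master identity \eqref{coeff_local} combined with an elementary covering argument.

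First, I would use that each $c_{n,j}[\,\cdot\,;k]$ is linear in its input and that $c_n[u;k]$ is the $\ell^2$-norm over channels of $(c_{n,j}[u;k])_j$. The reverse triangle inequality then gives
\[
\bigl|c_n[u';k]-c_n[u;k]\bigr| \;\leq\; \Bigl(\sum_{j} c_{n,j}[u'-u;k]^2\Bigr)^{1/2} \;\leq\; \sum_{j} \bigl|c_{n,j}[u'-u;k]\bigr|,
\]
so it suffices to control $2^{-2n}\sum_{k\in\mathbb K_n}|c_{n,j}[u'-u;k]|$ separately for each relevant channel $j$.

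Next, I would invoke the master identity \eqref{coeff_local} to rewrite
\[
c_{n,j}[u'-u;k] \;=\; \bigl\langle \partial_j(u'-u),\; I_j^{-1}\varphi^1_{j,n-1,k}\bigr\rangle.
\]
Since $u'-u\in SBV(\Omega)\subset BV(\Omega)$, the partial derivative $\partial_j(u'-u)$ is a signed Radon measure, and the right-hand side is literally integration against that measure. Combining the scaling $\|\varphi^1_{j,n-1,k}\|_\infty = O(2^{2n})$ with the uniform support control $\operatorname{supp}(\varphi^1_{j,n-1,k})\subset B_{n,k}$ for some ball $B_{n,k}$ of radius $O(2^{-n})$ (supplied by Assumption~\ref{assumption_R}(b)) yields the pointwise bound
\[
\bigl|c_{n,j}[u'-u;k]\bigr| \;\leq\; C \cdot 2^{2n} \cdot |\partial_j(u'-u)|(B_{n,k}).
\]
I would then sum over $k\in\mathbb K_n$ and observe that the balls $\{B_{n,k}\}_k$ cover a fixed neighbourhood of $\Omega$ with multiplicity bounded uniformly in $n$, depending only on $C_{wav}$. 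Consequently
\[
\sum_{k\in\mathbb K_n} |\partial_j(u'-u)|(B_{n,k}) \;\leq\; M\cdot|\partial_j(u'-u)|(\Omega) \;\leq\; M\cdot|u'-u|_{BV(\Omega)}.
\]
The factor $2^{2n}$ is then cancelled exactly by the prefactor $2^{-2n}$ in the statement, and combining with the channel-wise reduction yields $C'|u'-u|_{BV(\Omega)}\leq C'\|u'-u\|_{SBV(\Omega)}$, as required.

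The only point where I expect mild friction is the boundary bookkeeping: $\varphi^1_{j,n-1,k}$ lives at scale $n-1$, so for $k\in\mathbb K_n$ the support $B_{n,k}$ can poke slightly past $\partial\Omega$. This is handled by extending $u'-u$ to all of $\mathbb R^2$ (for instance by zero, which contributes a controlled boundary jump already accounted for by $|u'-u|_{BV}$ on a neighbourhood of $\overline{\Omega}$), so that both the distributional pairing and the overlap count remain consistent; it only inflates the final constant $C'$ and does not affect the structure of the argument.
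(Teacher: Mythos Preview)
Your proposal is correct and follows essentially the same route as the paper: reduce to channel-wise bounds via the reverse triangle inequality, invoke the identity \eqref{coeff_local}, and then exploit the uniform support size and $L^\infty$ bound of $\varphi^1_{j,n-1,k}$ to control the sum by the total variation of $u'-u$. The only cosmetic difference is that the paper packages the last step as a change of variables plus Fubini (writing $2^{-2n}\sum_k$ as $\int_{\Omega_n}$ and swapping integrals), whereas you phrase it as a bounded-overlap covering estimate; these are two ways of saying the same thing, and your version arguably handles the measure $\partial_j(u'-u)$ more cleanly than the paper's notation, which treats it as if it were a pointwise function.
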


\begin{proof}

Denote $\Omega_n:=\{x\in\Omega:\lfloor2^nx\rfloor\in\mathbb K_n\}$. For $j=1,2$, we both have
\begin{eqnarray*}
&&2^{-2n}\sum_{k\in\mathbb K_n}|c_{n,j}[u';k]-c_{n,j}[u;k]|\\
&=&\int_{\Omega_n}|c_{n,j}[u';\lfloor2^nx\rfloor]-c_{n,j}[u,\lfloor2^nx\rfloor]|dx\\
&=&|I_j|^{-1}\int_{\Omega_n}\left|\int_{supp(\varphi_j)}\left(\partial_ju'(2^{-n}y+x)-\partial_ju(2^{-n}y+x)\right)\varphi_j(x-(\lfloor2^nx\rfloor-2^nx))dy\right|dx\\
&\leq&|I_j|^{-1}\|\varphi_j\|_\infty\int_{\Omega_n}\int_{supp(\varphi_j)}\left|\partial_ju'(2^{-n}y+x)-\partial_ju(2^{-n}y+x)\right|dydx\\
&=&|I_j|^{-1}\|\varphi_j\|_\infty|supp(\varphi_j)|\cdot\|u'-u\|_{SBV(\Omega)}.
\end{eqnarray*}
And consequently,
\begin{eqnarray*}
&&2^{-2n}\sum_{k\in\mathbb K_n}|c_n[u';k]-c_n[u;k]|\\
&\leq&2^{-2n}\sum_{j=1,2}\sum_{k\in\mathbb K_n}|c_{n,j}[u';k]-c_{n,j}[u;k]|\\
&\leq&C\cdot\|u'-u\|_{SBV(\Omega)},
\end{eqnarray*}
where we define $C:=\sum_{j=1,2}|I_j|^{-1}\|\varphi_j\|_\infty|supp(\varphi_j)|>0$.
\end{proof}

\subsection{Finite approximation of the singularity}
By Proposition~\ref{fine_property}, a function of bounded variation (in particular, an SBV function) has a countably $C^1-$rectifiable singularity set on its domain $\Omega$. Namely, the singularity of $u\in SBV(\Omega)$ is included in a set of the form $\Gamma=\bigcup_{i=1}^\infty\Gamma^{(i)}$, where each $\Gamma^{(i)}=\operatorname{Im}(\gamma^{(i)})$ for a $C^1-$bijection $\gamma^{(i)}:\mathbb R\to\Omega$.

\begin{lemma}[Simple curve union approximation of singularity]
\label{curve_truncation}
Assume $u\in SBV(\Omega)$, and denote its jump singularity set as $\Gamma:=\Gamma(u)$. For any prescribed constant $\epsilon>0$, there exists an \emph{$\epsilon$-singularity set}, denoted $\Gamma_\epsilon\subset\Gamma$, which satisfies the following approximation properties:

\begin{itemize}
    \item[(i)] The set $\Gamma_\epsilon$ is a {simple union of curves} , which means it can be expressed as
\begin{equation}\label{finite_curve}
\Gamma_\epsilon=\bigcup_{i=1}^N\gamma^{(i)}([a_i,b_i])
\end{equation}
for some finite number $N\in\mathbb Z^+$ and $a_i,b_i\in\mathbb R$ for each $i\leq N$, where each two of these $N$ component have disjoint tubular neighbourhoods.

\item[(ii)] The intersection $\Gamma_{\epsilon}\cap\Gamma^*$ is sufficiently dominant in $\Gamma^*$ by measure. Specifically,
\begin{equation}\label{coverage}
\mathcal H^1\left(\Gamma^*\cap\Gamma_\epsilon\right)>\left\{
\begin{matrix}
(1-\epsilon)\cdot\mathcal H^1(\Gamma^*) & \text{if\ \ } \mathcal H^1(\Gamma^*)<\infty \\
\epsilon^{-1} & \text{if\ \ } \mathcal H^1(\Gamma^*)=\infty,
\end{matrix}
\right.
\end{equation}
where we recall that $\Gamma^* = \{x \in \Gamma : \rho(x) = 0\}$, the non-trivial part of the jump singularity set. 

\item[(iii)] The singular gradient $\nabla^su$ is sufficiently concentrated on $\Gamma_{\epsilon}
$ as a measure. In particular, its variation residue satisfies the bound
\begin{equation}\label{residue}
\int_{\Omega\setminus\Gamma_\epsilon}|\nabla^su|<\epsilon.
\end{equation}
\end{itemize}
\end{lemma}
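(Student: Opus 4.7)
The plan is to pass from the countable $C^1$-cover of $\Gamma$ supplied by Proposition \ref{fine_property} to a finite, pairwise-separated sub-collection of closed sub-arcs through three successive reductions, each losing only a controllable amount of the two measures in play, $|\nabla^s u|$ and the counting measure $\mathcal H^1\lfloor_{\Gamma^*}$.

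First, since $|\nabla^s u|(\Omega)<\infty$ and $\bigcup_{i=1}^{N}\Gamma^{(i)}$ monotonically exhausts $\bigcup_{i=1}^{\infty}\Gamma^{(i)}\supset\Gamma$, I pick an index $N_0$ (by monotone convergence) with $|\nabla^s u|\bigl(\Omega\setminus\bigcup_{i=1}^{N_0}\Gamma^{(i)}\bigr)<\epsilon/2$, and simultaneously $\mathcal H^1\bigl(\Gamma^{*}\cap\bigcup_{i=1}^{N_0}\Gamma^{(i)}\bigr)$ exceeds either $(1-\epsilon/2)\mathcal H^{1}(\Gamma^{*})$ in the finite case or $2/\epsilon$ in the infinite case; after a further truncation of each $\gamma^{(i)}$ to a bounded parameter window, the two relevant restricted measures become finite and the images become compact, at a cumulative additional cost of at most $\epsilon/(4N_0)$ per curve. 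Second, I would disjointify by setting $\tilde\Gamma^{(i)}:=\Gamma^{(i)}\setminus\bigcup_{j<i}\Gamma^{(j)}$, yielding $N_0$ pairwise disjoint Borel subsets of the curves whose union coincides with $\bigcup_{i=1}^{N_0}\Gamma^{(i)}$, so no capture is lost. Third, for each $i$, apply inner regularity of the finite Borel measures $|\nabla^s u|\lfloor\tilde\Gamma^{(i)}$ and $\mathcal H^1\lfloor(\tilde\Gamma^{(i)}\cap\Gamma^*)$ to locate a compact set $K_i\subset\tilde\Gamma^{(i)}$ forfeiting at most $\epsilon/(4N_0)$ of each; by disjointness of the $\tilde\Gamma^{(i)}$, the compacta $K_1,\dots,K_{N_0}$ are pairwise disjoint in $\mathbb R^2$, hence separated by a common positive distance $3\eta_0>0$. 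Finally, cover each compact preimage $(\gamma^{(i)})^{-1}(K_i)\subset\mathbb R$ by finitely many closed intervals $\{[\alpha_{i,k},\beta_{i,k}]\}_{k\le M_i}$ of length so small that uniform continuity of $\gamma^{(i)}$ on the bounded parameter range drags every $\gamma^{(i)}([\alpha_{i,k},\beta_{i,k}])$ into the $\eta_0$-neighbourhood of $K_i$. Then $\Gamma_\epsilon:=\bigcup_{i,k}\gamma^{(i)}([\alpha_{i,k},\beta_{i,k}])$ is a finite union of $C^{1}$-sub-arcs, any two lying at distance at least $\eta_0$, delivering (i); the telescoping bound $|\nabla^s u|(\Omega\setminus\Gamma_\epsilon)\le|\nabla^s u|\bigl(\Omega\setminus\bigcup_i\tilde\Gamma^{(i)}\bigr)+\sum_i|\nabla^s u|(\tilde\Gamma^{(i)}\setminus K_i)$, together with its inclusion-based analogue for $\mathcal H^1(\Gamma^*\cap\cdot)$, then delivers (ii) and (iii).

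The main obstacle I expect is that a generic Borel subset of a $C^{1}$-curve cannot be approximated from the inside by a finite union of closed sub-arcs — a fat Cantor pattern in the parameter line already refutes this directly. The workaround is the detour through an inner-compact approximation $K_i\subset\tilde\Gamma^{(i)}$ followed by a finite closed-interval cover that is allowed to slightly overshoot $K_i$; the safety margin $\eta_0$ inherited from disjointness of the $K_i$, together with the uniform continuity of each $\gamma^{(i)}$ on the bounded parameter window, prevents the overshoot from bringing sub-arcs belonging to different curves into contact, so the tubular-neighbourhood disjointness required in (i) is preserved.
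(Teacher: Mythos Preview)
Your argument is sound in outline and in fact follows a genuinely different route from the paper's. The paper proceeds constructively: it first subdivides each $\gamma^{(i)}$ into short arcs on which $|\dot\gamma^{(i)}(t)-\dot\gamma^{(i)}(t')|<\pi/4$ (forcing injectivity on each piece), then for every $i\ge 2$ removes the closed preimage $(\gamma^{(i)})^{-1}\bigl(\bigcup_{j<i}\Gamma^{(j)}\bigr)$, decomposing the remainder of $[0,b_i]$ into countably many open sub-intervals; it then truncates to finitely many such intervals and finally shrinks each one slightly so that the tubular neighbourhood theorem applies to the closures separately. Your approach is more measure-theoretic: you pass directly to pairwise disjoint compacta $K_i$ via inner regularity and get the cross-curve separation $3\eta_0$ for free from compactness, then overshoot each $K_i$ only by a controlled amount. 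This is arguably cleaner, and it makes the dependence on the two measures $|\nabla^s u|$ and $\mathcal H^1\lfloor_{\Gamma^*}$ more transparent.

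There is, however, one real gap in your write-up. Your separation constant $\eta_0$ is one third of the mutual distance among the compacta $K_1,\dots,K_{N_0}$, so it controls only sub-arcs coming from \emph{distinct} indices $i\ne j$. When the compact preimage $(\gamma^{(i)})^{-1}(K_i)$ is covered by several closed intervals $[\alpha_{i,k},\beta_{i,k}]$, the resulting arcs all lie in the $\eta_0$-neighbourhood of the \emph{same} $K_i$, and your argument says nothing about their mutual separation; as written, two such arcs may even overlap. The fix is short but must be stated: first merge any overlapping or abutting parameter intervals into a single interval, so that the surviving $[\alpha_{i,1},\beta_{i,1}],\dots,[\alpha_{i,M_i},\beta_{i,M_i}]$ are pairwise disjoint closed intervals; then, since $\gamma^{(i)}$ is injective on its bounded (hence compact) parameter window, the images are pairwise disjoint compact arcs and therefore lie at positive mutual distance. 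Replace $\eta_0$ by the minimum of all these within-index and cross-index separations before invoking the tubular neighbourhood theorem. Without this step, property~(i) is not established.
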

\begin{proof}
Without loss of generality, we assume the (rectified) singularity in question is curve-length parametrized, that is
\begin{equation}
\Gamma=\bigcup_{i=1}^\infty\Gamma^{(i)}=\bigcup_{i=1}^\infty\gamma^{(i)}([0,b_i]),
\end{equation}
where the $C^1$-mappings $\gamma^{(i)}$ have $|\dot\gamma^{(i)}|=1$ at all relevant points. Here each $\dot\gamma^{(i)}$, consequently, is continuous on $[0,b_i]$ and therefore is uniformly continuous on this interval, in particular, there exists an $\eta_i>0$ such that any $t,t'\in[0,b_i]$ satisfying $|t-t'|<\eta_i$ have $|\dot\gamma(t)-\dot\gamma(t')|<\frac{\pi}{4}$. It is easy to verify that any curve segment of the form $\gamma^{(i)}([a,a+\eta_i])$, where $[a,a+\eta_i]\subset[0,b_i]$, has no self-intersections. For simplicity of notations, in what follows we shall assume without loss of generality that this property has already been guaranteed for each of the curve segments $\gamma^{(i)}([0,b_i])$.

Then we iteratively resolve the mutual intersections in the following way. For each $i\geq2$, note first that $K_i:=\left(\gamma^{(i)}\right)^{-1}\left(\left(\bigcup_{j=1}^i\Gamma^{(j)}\right)\cap\Gamma^{(i)}\right)$ is a closed set in $[0,b_i]$, consequently $(0,b_i)\setminus K_i=\bigcup_{l=1}^\infty(\alpha_{i,l},\beta_{i,l})$ for some $\alpha_{i,l},\beta_{i,l}$, and we shall replace the curve segment $\Gamma^{(i)}$ by $\bigcup_{l=1}^\infty\gamma^{(i)}(\alpha_{i,l},\beta_{i,l})$.

\setlength{\unitlength}{1cm}
\begin{picture}(0,2)
    \put(0,0){
        \put(1.5,0){\line(0,1){1}}
        \put(0,0.5){\line(1,0){3}} 
        \put(0,0.49){\line(1,0){3}}
        \put(0,0.51){\line(1,0){3}}
        \put(3.2,0.5){$\Gamma^{(i)}$}
        \put(1.6,0.8){$\Gamma^{(j)}$}
    }
    \put(6,0.4){$\Rightarrow$}
    \put(8,0){
        \put(1.5,0){\line(0,1){1}} 
        \put(0,0.5){\line(1,0){3}} 
        \put(0,0.49){\line(1,0){1.25}}
        \put(0,0.51){\line(1,0){1.25}}
        \put(1.75,0.49){\line(1,0){1.25}}
        \put(1.75,0.51){\line(1,0){1.25}}
        \put(3.2,0.5){$\Gamma^{(i)}$}
        \put(1.6,0.8){$\Gamma^{(j)}$}
        \put(1.38,0.4){$)$}
        \put(1.45,0.4){$($}
    }
\end{picture}

Finally, for the given $\epsilon>0$, we first find some finite numbers $I$, $L_i$ such that the object $\bigcup_{i=1}^I\bigcup_{l=1}^{L_i}\gamma^{(i)}([\alpha_{i,l},\beta_{i,l}])$ satisfies all the above requirements concerning residues but with $\epsilon/2$ instead of $\epsilon$, and then we properly specify some $\epsilon_{i,l}>0$ such that the closed curve segments of the form $\gamma^{(i)}([\alpha_{i,l}+\epsilon_{i,l},\beta_{i,l}-\epsilon_{i,l}])$ have mutually disjoint tubular neighbourhoods (by tubular neighborhood theorem) and all the relevant residue loss are controlled by another $\epsilon/2$. The union $\bigcup_{i=1}^I\bigcup_{l=1}^{L_i}\gamma^{(i)}([\alpha_{i,l}+\epsilon_{i,l},\beta_{i,l}-\epsilon_{i,l}])$ is exactly what we want.
\end{proof}

Roughly speaking, the strategy we adopt in what follows is to show the liminf result with respect to the $\epsilon$-singuarity set, and then pass to the limit $\epsilon \to 0$. 

We start our argument from the next geometric lemma, which tells that the $\mathcal H^1$-measure of a curve can be robustly recovered via box counting. 

\begin{lemma}[A robust box counting lemma]
\label{area_convergence}
    Suppose $\Gamma$ is a {simple} union of $C^1-$curve segments, and for each $n\in\mathbb Z^+$, suppose $S_n\subset\Gamma$ has a measurable $C^1-$preimage and has $\mathcal H^1(S_n)\leq\delta$ for some fixed $\delta>0$, we have estimate
    \begin{equation}
    \liminf_{n\to\infty}\left(\frac{1}{2H_n}\cdot\mathcal H^2(\{x\in\Omega:dist(x,\Gamma\setminus S_n)\leq H_n\})\right)\geq\mathcal H^1(\Gamma)-\delta,
    \end{equation}
    where $\{H_n\}_{n=1}^\infty$ denotes a sequence that satisfies $H_n\to0$.
\end{lemma}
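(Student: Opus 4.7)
The plan is to reduce the problem to each simple $C^1$-component of $\Gamma$ separately via the disjoint tubular neighborhood hypothesis, and then obtain the lower bound by an explicit normal-tube parametrization. The liminf in the statement is asking for the Minkowski content of a $C^1$ curve, restricted to complement of a small subset, so the key quantitative ingredient is that such content equals arclength.

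By assumption, $\Gamma=\bigcup_{i=1}^N\gamma^{(i)}([a_i,b_i])$ is a finite simple union with mutually disjoint tubular neighborhoods of some uniform width $r_0>0$. For $n$ large enough that $H_n<r_0$, the $H_n$-neighborhoods of the various $\Gamma^{(i)}$ are pairwise disjoint, so the area in question decomposes as $\sum_i\mathcal H^2(\{x:\operatorname{dist}(x,\Gamma^{(i)}\setminus S_n)\leq H_n\})$. Parametrizing each $\Gamma^{(i)}$ by arclength so that $|\dot\gamma^{(i)}|\equiv 1$, I would introduce $T_{n,i}:=(\gamma^{(i)})^{-1}(S_n\cap\Gamma^{(i)})$ and $E_{n,i}:=[a_i,b_i]\setminus T_{n,i}$, which are measurable by hypothesis and satisfy
\begin{equation*}
\sum_i|E_{n,i}|=\mathcal H^1(\Gamma)-\mathcal H^1(S_n)\geq\mathcal H^1(\Gamma)-\delta.
\end{equation*}

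On each component, I would consider the normal tube map $\Phi_i(t,s):=\gamma^{(i)}(t)+s\nu^{(i)}(t)$, where $\nu^{(i)}$ is a continuous unit normal. The inclusion $\Phi_i(E_{n,i}\times[-H_n,H_n])\subset\{x:\operatorname{dist}(x,\gamma^{(i)}(E_{n,i}))\leq H_n\}$ is immediate, and the disjoint tubular neighborhood hypothesis ensures $\Phi_i$ is a homeomorphism onto its image for $|s|\leq r_0$, so the area of the image admits a genuine parametric computation. Using the uniform continuity of $\dot\gamma^{(i)}$ on compact $[a_i,b_i]$, one partitions $[a_i,b_i]$ into finitely many subintervals $J_k$ on which the tangent oscillates by less than a prescribed $\epsilon>0$. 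On each such $J_k$ the curve is $\epsilon$-close to a straight segment, so the normal tube over $E_{n,i}\cap J_k$ has $2$-dimensional Lebesgue measure at least $2H_n|E_{n,i}\cap J_k|(1-O(\epsilon))$ once $H_n$ is sufficiently small. Summing over $k$ and $i$ produces
\begin{equation*}
\mathcal H^2\!\left(\{x:\operatorname{dist}(x,\Gamma\setminus S_n)\leq H_n\}\right)\geq 2H_n\bigl(\mathcal H^1(\Gamma)-\delta\bigr)(1-O(\epsilon)),
\end{equation*}
and dividing by $2H_n$, passing to liminf, and then sending $\epsilon\to 0$ finishes the proof.

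The main technical obstacle is making the normal-tube area bound uniform in the measurable sets $E_{n,i}$, which are allowed to depend on $n$ in arbitrarily irregular ways. I would overcome this by choosing the partition scale first, based only on the modulus of continuity of $\dot\gamma^{(i)}$ and on $\epsilon$, independently of $n$ and of $E_{n,i}$; only after this is fixed do I let $n$ grow so that $H_n$ is much smaller than the chosen partition scale. This separation decouples the tangent-oscillation estimate from the particular structure of $E_{n,i}$ and produces the uniform asymptotic lower bound required.
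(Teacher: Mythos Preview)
Your overall strategy is sound and parallels the paper's: reduce to single components via the disjoint-tube hypothesis, then exploit small tangent oscillation on short pieces to get the area lower bound. The paper carries this out by replacing $\Gamma$ with a piecewise linear interpolant $\tilde\Gamma_n$ (nodes spaced at scale $\sim H_n$), projecting $S_n$ contractively onto it, and computing the tube area over $\tilde\Gamma_n\setminus\tilde S_n$ as an explicit union of rectangles with quantified corner overlaps.

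There is, however, a genuine gap in your argument. You write that since $\Phi_i(t,s)=\gamma^{(i)}(t)+s\,\nu^{(i)}(t)$ is a homeomorphism, ``the area of the image admits a genuine parametric computation,'' and then assert
\[
\mathcal H^2\bigl(\Phi_i((E_{n,i}\cap J_k)\times[-H_n,H_n])\bigr)\ \geq\ 2H_n\,|E_{n,i}\cap J_k|\,(1-O(\epsilon)).
\]
But the curves here are only $C^1$, so the unit normal $\nu^{(i)}$ is merely continuous and $\Phi_i$ is in general \emph{not} Lipschitz; no change-of-variables or area formula is available, and a homeomorphism alone gives no control on Lebesgue measure. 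Nor does ``$\epsilon$-close to a straight segment'' rescue this for an arbitrary measurable $E_{n,i}\cap J_k$: a uniformly small $C^0$ perturbation of an isometry can still distort the measure of a set with large topological boundary (e.g.\ $E$ a fat Cantor set), so the erosion-type argument one is tempted to run collapses. The key inequality is therefore unjustified as written.

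The fix is short and is essentially what the paper does implicitly: on each $J_k$ replace $\nu^{(i)}(t)$ by the \emph{constant} normal $\nu_k:=\nu^{(i)}(t_k)$. The modified map $\tilde\Phi_k(t,s)=\gamma^{(i)}(t)+s\,\nu_k$ is $C^1$ with Jacobian $|\dot\gamma^{(i)}(t)\cdot\dot\gamma^{(i)}(t_k)|\geq 1-\epsilon^2/2$, is injective on $J_k\times[-H_n,H_n]$, and still lands in the $H_n$-neighborhood of $\gamma^{(i)}(E_{n,i})$; now the standard area formula delivers the desired lower bound. You must then account for overlaps of the images $\tilde\Phi_k(\cdot)$ at the interfaces $\partial J_k$ (each of area $O(\epsilon H_n^2)$; the number of pieces depends only on $\epsilon$, so the total is $o(H_n)$) and check that non-adjacent pieces do not interact once $H_n$ is below a threshold depending on the fixed partition and the embedding. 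The paper's piecewise-linear route packages all of this at once by making the normals piecewise constant automatically and handling the corner overlaps explicitly.
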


When the exception set $S_n$ is fixed, the theorem is nothing but a standard estimate of a tubular neighbourhood's area when its width shrinks to 0. In our context, however, $S_n$ has to be set variable with respect to resolution $n$ in order to handle the varying sequence $u_n\to u$ involved in the desired $\Gamma$-convergence. Technically, it amounts to characterizing the energy's stability with respect to minor perturbations imposed on the essential singularity $\Gamma^*_{u}$ (within some well-understood potential estimate $\Gamma_{u}$). For the sake of clarity, we postpone its proof to Section \ref{sec_lemmas}, which is technically based on the following two facts. 

\begin{restatable}{lemma}{robustintersection}
\label{robust_intersection}
    For any fixed simple union of $C^1-$curve segments $\Gamma$, there exists a constant $N_\Gamma\in\mathbb Z^+$ such that for any $n\geq N_\Gamma$, if $|\square_{n,k}\cap\Gamma|>0$, there is an $l\in\{-1,0,1\}^2$ such that $|\square_{n,k+l}\cap\Gamma|>2^{-n}/9$.
\end{restatable}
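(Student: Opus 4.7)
The plan is to exploit two key features of the \emph{simple union} assumption on $\Gamma$: the components $\Gamma^{(i)} = \gamma^{(i)}([a_i, b_i])$ have mutually disjoint tubular neighbourhoods of some common positive width $w$, and each is a $C^1$-bijection on a compact interval of positive length $\ell_i := b_i - a_i$, with only finitely many components. Let $\ell_{\min} := \min_i \ell_i > 0$, and reparametrize each $\gamma^{(i)}$ by arc length.

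I would first choose $N_\Gamma$ large enough so that for every $n \geq N_\Gamma$: (a) the diameter $3\sqrt{2} \cdot 2^{-n}$ of any $3\times 3$ block of resolution-$n$ cells is smaller than $w$, which ensures each such block intersects at most one component $\Gamma^{(i)}$; and (b) $2^{-n}/2 < \ell_{\min}/2$, which ensures that from any $p = \gamma^{(i)}(t_0) \in \Gamma$ at least one of the sub-intervals $[t_0, t_0 + 2^{-n}/2]$ or $[t_0 - 2^{-n}/2, t_0]$ is contained in $[a_i, b_i]$.

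Given $|\square_{n,k} \cap \Gamma| > 0$, I would pick any $p = \gamma^{(i)}(t_0) \in \square_{n,k} \cap \Gamma$ and let $A$ denote the image of the feasible arc-length sub-interval of length $2^{-n}/2$ emanating from $t_0$. Since arc length dominates Euclidean displacement, $A \subset \overline{B(p, 2^{-n}/2)}$. Because $p \in \square_{n,k}$, the distance from $p$ to the boundary of the surrounding $3 \times 3$ block is at least $2^{-n}$, so $A$ stays inside this block.

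The last step is pigeonhole. The closed disk $\overline{B(p, 2^{-n}/2)}$ has diameter $2^{-n}$ equal to the cell side, so each of its coordinate projections is an interval of length $2^{-n}$ meeting at most $2$ cell columns (respectively rows). Therefore the disk meets at most $2\times 2 = 4$ resolution-$n$ cells, each necessarily of the form $\square_{n, k+l}$ for some $l \in \{-1, 0, 1\}^2$. Since $A$ has $\mathcal H^1$-measure exactly $2^{-n}/2$ and is distributed among these at most $4$ cells,
\[
\max_{l \in \{-1, 0, 1\}^2} |\square_{n, k+l} \cap \Gamma| \geq \max_{l} |\square_{n, k+l} \cap A| \geq \frac{2^{-n}/2}{4} = \frac{2^{-n}}{8} > \frac{2^{-n}}{9}.
\]
The only slightly subtle point is the branch-choice for $p$'s near segment endpoints, which is precisely why the uniform lower bound $\ell_{\min} > 0$ (available because ``simple union'' entails finitely many components of positive length) is essential.
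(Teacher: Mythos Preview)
Your argument is correct and follows essentially the paper's strategy: from a point of $\Gamma$ in $\square_{n,k}$, extract a piece of curve guaranteed to lie in the surrounding $3\times3$ block and pigeonhole its $\mathcal H^1$-length among the neighbouring cells. The paper instead follows the curve until it first exits the $3\times3$ block (obtaining arc length $\ge 2^{-n}$, then pigeonholing over all nine cells, with a separate trivial case when the whole component sits inside the block), whereas you take a fixed arc of length $2^{-n}/2$, confine it to a disk of that radius meeting at most four cells, and get the cleaner bound $2^{-n}/8>2^{-n}/9$ without a case split. As a minor remark, your condition~(a) on the tubular width $w$ is never invoked afterwards---the pigeonhole runs entirely within the single component containing $p$---so it can be dropped.
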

\begin{restatable}{lemma}{boundintersection}
\label{bound_intersection}
    Assume that $\Gamma$ is a simple union of $C^1-$curve segments, there is an $N_\Gamma\in\mathbb Z^+$ and $C_\Gamma>0$ such that $|\square_{n,k}\cap\Gamma|\leq C_\Gamma\cdot2^{-n}$ for any $n\geq N_\Gamma$, $k\in\mathbb K_n$.
\end{restatable}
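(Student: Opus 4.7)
The plan is to reduce the claim to a single $C^1$-curve segment at a time and exploit the fact that an injective $C^1$-curve is essentially Lipschitz-straight at small scales. The simple-union structure supplies $\Gamma = \bigcup_{i=1}^N \gamma^{(i)}([a_i,b_i])$ with $N < \infty$, so an additive estimate over components reduces the problem to showing $|\square_{n,k} \cap \gamma^{(i)}([a_i,b_i])| \leq C_i \cdot 2^{-n}$ for each $i$, after which we may take $C_\Gamma = \sum_i C_i$. By standard reparametrization I would assume each $\gamma^{(i)}$ is arclength-parametrized, so that $|\dot\gamma^{(i)}| \equiv 1$ and $\mathcal{H}^1(\gamma^{(i)}(J)) = |J|$ for any measurable $J \subset [a_i,b_i]$.

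The first step is a local bi-Lipschitz bound. Since $\dot\gamma^{(i)}$ is continuous on the compact interval $[a_i,b_i]$, it is uniformly continuous, so I can fix $\eta_i > 0$ with $|\dot\gamma^{(i)}(s) - \dot\gamma^{(i)}(t)| \leq 1/2$ whenever $|s-t| \leq \eta_i$. Writing $\gamma^{(i)}(s) - \gamma^{(i)}(t) = (s-t)\dot\gamma^{(i)}(t) + \int_t^s (\dot\gamma^{(i)}(\tau) - \dot\gamma^{(i)}(t))\,d\tau$ and taking norms then yields
\begin{equation*}
    |\gamma^{(i)}(s) - \gamma^{(i)}(t)| \geq \tfrac{1}{2}|s-t|, \qquad |s-t| \leq \eta_i.
\end{equation*}

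The second step is to promote this into a diameter control on the preimage of a pixel box. Because $\gamma^{(i)}$ is a continuous injection from a compact set, it is a homeomorphism onto its image $\Gamma^{(i)}$ and its inverse is uniformly continuous; thus there exists $r_i > 0$ such that any $p,q \in \Gamma^{(i)}$ with $|p-q| < r_i$ satisfy $|(\gamma^{(i)})^{-1}(p) - (\gamma^{(i)})^{-1}(q)| < \eta_i$. Choosing $N_\Gamma$ large enough that $\sqrt{2}\cdot 2^{-n} < \min_{1\le i\le N} r_i$ for all $n \geq N_\Gamma$, the bi-Lipschitz estimate applies to every pair of points in $\Gamma^{(i)} \cap \square_{n,k}$, giving $|t_p - t_q| \leq 2|p-q| \leq 2\sqrt{2}\cdot 2^{-n}$. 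Hence $(\gamma^{(i)})^{-1}(\square_{n,k})$ is contained in a parameter interval of length at most $2\sqrt{2}\cdot 2^{-n}$, and the arclength identity delivers $|\Gamma^{(i)} \cap \square_{n,k}| \leq 2\sqrt{2}\cdot 2^{-n}$. Summing over $i=1,\ldots,N$ produces the desired inequality with $C_\Gamma = 2\sqrt{2}\,N$.

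The main subtlety is passing from ``$\dot\gamma$ is locally almost-constant in parameter'' to the uniform diameter bound on $(\gamma^{(i)})^{-1}(\square_{n,k})$, since a priori a $C^1$ curve could enter and leave the same pixel box many times. The injectivity built into the simple-union hypothesis, together with compactness of the parameter intervals, is precisely what excludes this, via uniform continuity of the inverse map; near self-intersections or tangential returns no such constant $C_\Gamma$ could exist, which is also why Lemma~\ref{curve_truncation} took pains to arrange mutually disjoint tubular neighbourhoods in the first place.
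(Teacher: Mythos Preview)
Your proof is correct and follows essentially the same high-level strategy as the paper's: reduce to a single arclength-parametrized $C^1$ segment, use uniform continuity of $\dot\gamma$ to get a local bi-Lipschitz lower bound $|\gamma(s)-\gamma(t)|\geq c|s-t|$, and conclude that the parameter preimage of any box $\square_{n,k}$ has length $O(2^{-n})$.

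The one genuine difference is in how you pass from the local bi-Lipschitz bound to the global preimage-diameter bound. The paper first \emph{subdivides} each $[a_i,b_i]$ into finitely many subintervals on which $|\dot\gamma_i(s)-\dot\gamma_i(t)|<\pi/4$ throughout; on each such piece the projection estimate $|(\gamma_i(s)-\gamma_i(t))\cdot\dot\gamma_i(t)|\gtrsim|s-t|$ holds for \emph{all} $s,t$, so the preimage of any box is automatically contained in one short interval, and the argument works for every $n\geq 1$ with constant $4K$ (where $K$ is the number of subintervals). You instead keep the original $N$ components and invoke the homeomorphism property: injectivity on a compact interval forces the inverse to be uniformly continuous, which supplies the $r_i$ allowing you to apply the bi-Lipschitz bound globally once $\operatorname{diam}(\square_{n,k})<r_i$. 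This is a clean alternative that avoids subdivision and yields $C_\Gamma=2\sqrt{2}\,N$ in terms of the original number of components, at the cost of an $N_\Gamma$ that is less explicit. Either route is fine; yours leans more heavily on the ``no self-intersection'' part of the simple-union hypothesis, which you correctly identify as the crux.
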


{The above lemmas bring us the following messages. First, a minor intersection at a box can be augmented by trading off a spatial offset; Second,  $\Gamma$ cannot always be severely intertwined at any single box when resolution $n\to\infty$.}

\subsection{Near-singularity estimates}

We start with a local asymptotic analysis around a fix point $x$ in vicinity of $\Gamma^*_u$, and then conduct estimation over the whole span of this singularity via uniformity yielded from Egorov's theorem.

\begin{proposition}[Coefficient lower-bound near singularity]
\label{large_coeff_convergence}
Assume the model parameters are specified conforming to those stated in {\Cref{assumption_R}} (\Cref{liminf}), then there exists a constant $C>0$ such that for every $x\in\Gamma^*_u$, we have
\begin{equation}
\liminf_{n\to\infty}\frac{c_n[u;\lfloor 2^nx\rfloor+l]}{C\cdot2^n\rho(x)}\geq 1,
\end{equation}
for any $l\in\{-1,0,1\}^2$.
\end{proposition}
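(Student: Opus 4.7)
The plan is to unwind the discrete wavelet coefficients at $k_n := \lfloor 2^n x\rfloor + l$ via the master key identity \eqref{coeff_local}, reduce the local behaviour of $u$ near $x$ to the jump profile furnished by \Cref{fine_property}, and then read off the lower bound from the robustness constant $\Delta \geq C_2$ supplied by Assumption \ref{assumption_R}(c). Throughout I would fix $x\in\Gamma^*_u$ with normal $\nu$ and jump magnitude $\rho=\rho(x)$.

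My first step would be to expand $c_{n,j}[u;k_n]=\lambda_{n,j}\cdot 2^n\langle u,\psi_{j,n-1,k_n}\rangle$ and perform a change of variables that normalizes the wavelet to unit scale. The pairing rewrites as $\epsilon_n\int u(x+\epsilon_n z+\epsilon_n a_n)\psi_j(z)\,dz$ for some $\epsilon_n\asymp 2^{-n}$, where the normalized offset $a_n$ collects the pixel quantization $\{2^n x\}\in[0,1)^2$ together with the lattice correction by $l\in\{-1,0,1\}^2$, and hence lies in $[-2,1]^2$ uniformly in $n$. This is precisely the range singled out in Assumption \ref{assumption_R}(c), which is the geometric reason the proposition is restricted to the $3\times 3$ neighbourhood. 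Next, \Cref{fine_property} yields $L^2$-convergence of $u(x+\epsilon_n\,\cdot)$ to the step function $u_x:=u^+\chi_{z\cdot\nu>0}+u^-\chi_{z\cdot\nu<0}$ on every fixed ball; combined with the compact support and $L^2$-boundedness of $\psi_j$ from Assumption \ref{assumption_R}(b), Cauchy--Schwarz replaces $u$ by $u_x(\,\cdot+a_n)$ inside the pairing at the cost of an error that is $o(1)$ relative to the leading order.

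A direct computation, using that $\psi_j$ has zero integral (a UEP consequence), collapses $\langle u_x(\,\cdot+a_n),\psi_j\rangle = \rho\cdot I_j(\nu, a_n)$, which is exactly the quantity appearing in \eqref{robust_supp}. Unravelling the prefactor $\lambda_{n,j}\cdot 2^n\cdot\epsilon_n$ then leads to $|c_{n,j}[u;k_n]| = \Theta(2^n)\cdot\rho\cdot|I_j|^{-1}|I_j(\nu, a_n)| + o(2^n)$. Taking the $\ell^2$-norm in the channel index $j$ and retaining only the two distinguished channels $j=1,2$ supplied by Assumption \ref{assumption_R}(a), I would invoke $\sqrt{I_1(\nu,a_n)^2+I_2(\nu,a_n)^2}\geq\Delta\geq C_2$ uniformly over $\nu\in\mathbb S^1$ and $a_n\in[-2,1]^2$, to conclude $c_n[u;k_n]\geq C\cdot 2^n\rho + o(2^n)$ for some positive constant $C$ depending only on the wavelets. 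The announced $\liminf$ statement follows upon dividing by $C\cdot 2^n\rho(x)$.

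The hard part is not any single estimate but the careful bookkeeping in the first step: ensuring that after the change of variables the normalized offset $a_n$ genuinely lies in $[-2,1]^2$ uniformly in $n$ and $l$, and verifying that the $L^2$-approximation error inherited from \Cref{fine_property} really is lower order against the $\Theta(2^n)$-size leading term. Without the offset alignment the uniform bound $\Delta\geq C_2$ would be unavailable, and the argument would break down precisely for edge orientations near the coordinate axes where an individual $I_j$ can vanish -- which is in turn why Assumption \ref{assumption_R}(a) insists on \emph{two} wavelets with complementary first-order vanishing moments rather than a single one.
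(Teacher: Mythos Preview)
Your proposal is correct and follows essentially the same route as the paper: unwind the coefficient via \eqref{coeff_local}, rescale so that the local behaviour of $u$ near $x$ is compared in $L^2$ against the step profile $u_x^+\chi_{y\cdot\nu>0}+u_x^-\chi_{y\cdot\nu<0}$ furnished by \Cref{fine_property}, and then invoke the uniform lower bound $\Delta\ge C_2$ from \eqref{robust_supp} over $a\in[-2,1]^2$. Your explicit bookkeeping of the offset $a_n$ (quantization error plus the shift $l$) and your remark that the vanishing zeroth moment collapses the pairing to $\rho\cdot I_j(\nu,a_n)$ make transparent two steps the paper leaves implicit, but there is no substantive difference in strategy.
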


In other words, as resolution $n\to\infty$, a single wavelet coefficient near jump discontinuity $x\in\Omega$ has a clean scale separation by factor $2^n$, and, besides, its value should be at least proportionate to the local jump size $\rho(x)$. This characterization is robust with respect to a small spatial perturbation $l$, that is, jumps can be detected consistently by wavelets situated \emph{around} the location.

\begin{proof}
By \eqref{coeff_local}, we have $-I_j2^{-n}c_{n,j}[u,k]=\left<u,\psi^1_{j,n-1,k}\right>$ for $j = 1, 2$ at any $k\in\mathbb K_n$, where $\psi^1_{j,n,k}:=2^{2n}\psi_j(2^n\cdot-k)$ is the $L^1-$normalized wavelet. Denote the rounding error as $a_n:=2^nx-\lfloor 2^nx\rfloor$, we have the following estimation,
\begin{eqnarray*}
&&\left|\left(\sum_{j=1,2}\left|\left<u,\psi^1_{j,n,\lfloor 2^nx\rfloor}\right>\right|^2\right)^{\frac{1}{2}}-\left(\sum_{j=1,2}\left|\left<u^+_x \chi_{y \cdot \nu > 0} + u^-_x \chi_{y \cdot \nu < 0},\psi_j(y - a_n)\right>\right|^2\right)^{\frac{1}{2}}\right|\\
&\leq&\left(\sum_{j=1,2}\left|\left<u,\psi^1_{j,n,\lfloor2^nx\rfloor}\right>-\left<u^+_x \chi_{y \cdot \nu > 0} + u^-_x \chi_{y \cdot \nu < 0} ,\psi_j(y-a_n)\right>\right|^2\right)^{\frac{1}{2}}\\
&=&\left(\sum_{j=1,2}\left|\left<u(2^{-n}y-x)-(u^+_x \chi_{y \cdot \nu > 0} + u^-_x \chi_{y \cdot \nu < 0}),\psi_j(y-a_n)\right>_y\right|^2\right)^{\frac{1}{2}},\\
\end{eqnarray*}
where the last term converges to 0 as $n \to \infty$ by Lemma \ref{fine_property} (in particular, \eqref{jump_local}). 
As a result, it holds that 
$$\left(\sum_{j=1,2}\left|\left<u,\psi^1_{j,n,\lfloor 2^nx\rfloor}\right>\right|^2\right)^{\frac{1}{2}} \to \left(\sum_{j=1,2}\left|\left<u^+_x \chi_{y \cdot \nu > 0} + u^-_x \chi_{y \cdot \nu < 0},\psi_j(y - a_n)\right>\right|^2\right)^{\frac{1}{2}},$$
as $n \to \infty$.

Set $I_0=\max\{|I_1|,|I_2|\}$, we then have
\begin{eqnarray*}
&&\liminf_{n\to\infty}2^{-n}c_n[u,\lfloor2^nx\rfloor]\\
&\geq&\liminf_{n\to\infty}I_0^{-1}\left(\sum_{j=1,2}\left|\left<u,\psi^1_{j,n,k}\right>\right|\right)^{\frac{1}{2}}\\
&=&\liminf_{n\to\infty}I_0^{-1}\left(\sum_{j=1,2}\left|\left<u^+_x \chi_{y \cdot \nu > 0} + u^-_x \chi_{y \cdot \nu < 0},\psi_j(y-a_n)\right>_y\right|^2\right)^{\frac{1}{2}}\\
&\geq&\rho(x)I_0^{-1}\cdot\inf_{a\in[-2,1)^2,\nu\in\mathbb S^1}\left(\sum_{j=1,2}|\left<\chi_{y\cdot\nu>0},\psi_j(y-a)\right>_y|^2\right)^{\frac{1}{2}}\\
&\geq&\rho(x)\cdot C_2I_0^{-1},
\end{eqnarray*}
where $C_2$ is the constant stated in \Cref{assumption_R}, and we conclude the proof by specifying $C:=C_2I_0^{-1}$.
\end{proof}

Now we leverage this local estimate to get a global one, that is, concerning the total energy in the vicinity of the whole (approximate) singularity.

In what follows, we have in mind that $\Gamma_\epsilon$ should denote some appropriately chosen $\epsilon-$approximate to $\Gamma^*_u$ (as the name suggests), though we do not assume this throughout the proof. 

\begin{proposition}[Energy lower-bound, near-singularity]\label{liminf_singular}
For any simple union $\Gamma_\epsilon$ and any sequence $u_n\to u$ in $SBV(\Omega)$, we have asymptotic lower-bound
\begin{equation}
\liminf_{n\to\infty}\left(2^{-2n}\sum_{k\in\widehat\Gamma_{\epsilon,n}}\hat c_n[u_n;k]\right)\geq\alpha\cdot\mathcal H^1(\Gamma_\epsilon\cap\Gamma^*(u)),
\end{equation}
where $\widehat\Gamma_{\epsilon,n}$ denotes the $H_n-$tubular neighbourhood of $\Gamma_\epsilon$, namely,
\begin{equation}\label{tubular}
\widehat\Gamma_{\epsilon,n}:=\{k\in\mathbb K_n:dist(2^{-n}k,\Gamma_\epsilon)\leq H_n\}.
\end{equation}
\end{proposition}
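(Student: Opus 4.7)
The central observation is that $\hat c_n[u_n;k]$ should saturate at the ceiling $M_n=\alpha/(2H_n)$ on a lattice set covering essentially all of the $H_n$-tubular neighborhood of $\Gamma_\epsilon\cap\Gamma^*(u)$. Once this is in place, setting $A_n:=\{k\in\widehat\Gamma_{\epsilon,n}:\hat c_n[u_n;k]=M_n\}$ yields
\begin{equation*}
h_n^2 \sum_{k \in \widehat\Gamma_{\epsilon,n}} \hat c_n[u_n;k]\ \ge\ M_n\,h_n^2\,\#A_n\ =\ \frac{\alpha\,h_n^2\,\#A_n}{2H_n},
\end{equation*}
and the right-hand side is controlled by the robust box-counting estimate of \Cref{area_convergence}.

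First, I would localize to a uniformly ``good'' piece of the singularity. For $\delta,\rho_0>0$, set $F_{\rho_0}:=\{x\in\Gamma_\epsilon\cap\Gamma^*(u):\rho(x)\ge\rho_0\}$; by Egorov's theorem applied to \Cref{large_coeff_convergence}, there is a Borel $F\subset F_{\rho_0}$ with $\mathcal H^1(F_{\rho_0}\setminus F)<\delta$ and an $N$ such that
\begin{equation*}
c_n[u;\lfloor 2^n x\rfloor+l']\ \ge\ \Theta_n:=\tfrac{C}{2}\,2^n\rho_0,\qquad\forall\,n\ge N,\ x\in F,\ l'\in\{-1,0,1\}^2.
\end{equation*}
Since $\Theta_n/M_n\sim\rho_0\cdot 2^{n/2}\to\infty$, eventually $\Theta_n\ge 2M_n$, and hence $\hat c_n[u;k]=M_n$ for any $k$ whose $\mathbb B_{n,k}$ contains some $\lfloor 2^n x\rfloor+l'$ with $x\in F$. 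To transfer this saturation to $u_n$, I would set the stability threshold to the large margin $\Theta_n-M_n$ instead of the naive $M_n$: define
\begin{equation*}
B_n:=\{l\in\mathbb K_n:|c_n[u_n;l]-c_n[u;l]|>\Theta_n-M_n\},
\end{equation*}
so that Markov's inequality applied to \Cref{coeff_sum} yields $h_n^2\,\#B_n\le C\|u_n-u\|_{SBV}/(\Theta_n-M_n)=O(2^{-n}\|u_n-u\|_{SBV})$.

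For $k$ lying in the $(H_n-3h_n)$-tubular neighborhood of $F$, some $x\in F$ has $\lfloor 2^n x\rfloor+\{-1,0,1\}^2\subset\mathbb B_{n,k}$; as soon as one of these nine lattice points escapes $B_n$, the definition of $B_n$ forces $c_n[u_n;\cdot]\ge\Theta_n-(\Theta_n-M_n)=M_n$ there, so $\hat c_n[u_n;k]=M_n$. Letting $\tilde S_n\subset F$ be the set of $x$'s whose entire $3\times 3$ block is contained in $B_n$, the curve--pixel bound \Cref{bound_intersection} gives
\begin{equation*}
\mathcal H^1(\tilde S_n)\ \le\ 9\,C_\Gamma\,h_n\,\#B_n\ =\ 9\,C_\Gamma\,h_n^{-1}(h_n^2\,\#B_n)\ =\ O(\|u_n-u\|_{SBV})\ \longrightarrow\ 0.
\end{equation*}
Applying \Cref{area_convergence} to $\Gamma_\epsilon$ with exception set $S_n=(\Gamma_\epsilon\setminus F)\cup\tilde S_n$ (whose $\limsup$ in $\mathcal H^1$ is at most $\delta+\mathcal H^1(\Gamma_\epsilon\cap\{\rho<\rho_0\})$), I obtain $\liminf_n h_n^2\,\#A_n/(2H_n)\ge\mathcal H^1(\Gamma_\epsilon)-\delta-\mathcal H^1(\Gamma_\epsilon\cap\{\rho<\rho_0\})$. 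Sending $\delta\to 0$ and $\rho_0\to 0$ and using $\mathcal H^1(\Gamma_\epsilon)=\mathcal H^1(\Gamma_\epsilon\cap\Gamma^*(u))$ recovers the desired bound.

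The hardest step is the stability calibration: the $L^1$-type bound of \Cref{coeff_sum} combined with the $\ell^\infty$-type max in $\hat c_n$ does not permit any naive pointwise transfer, and the natural choice $T=M_n$ gives only the useless $h_n\,\#B_n=O(1)$. The key observation enabling the argument is that \Cref{large_coeff_convergence} provides a much larger safety margin $\Theta_n-M_n\sim 2^n\rho_0$ between the coefficient of $u$ and the ceiling, which raises the admissible stability threshold by a factor of $2^{n/2}$; combined with the one-dimensional dimensional bound of \Cref{bound_intersection}, this suppresses the bad-projection footprint $\mathcal H^1(\tilde S_n)$ to $0$ even though $\#B_n$ itself may grow.
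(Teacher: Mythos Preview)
Your argument is correct and follows the same overall strategy as the paper: Egorov on \Cref{large_coeff_convergence} to get a uniform coefficient lower bound $\Theta_n\sim 2^n\rho_0$, a Markov/counting argument via \Cref{coeff_sum} exploiting the scale gap $\Theta_n\gg M_n$ to transfer from $u$ to $u_n$, and then \Cref{area_convergence} for the tubular-area lower bound. The bookkeeping differs: the paper first regularizes the Egorov set $G_\delta$ to a finite union of closed curve segments $\tilde G_\delta$ and invokes \Cref{robust_intersection} to pass between the box-index sets $\tilde{\mathbb G}_{\delta,n}$ and $(\mathbb G_{\delta,n})^+$, whereas you apply \Cref{area_convergence} directly to $\Gamma_\epsilon$ with the large exception $S_n=(\Gamma_\epsilon\setminus F)\cup\tilde S_n$. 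Your route is a bit more direct and avoids \Cref{robust_intersection} altogether; the paper's route keeps the exception set uniformly $O(\delta)$ but pays for it with the extra regularization step.

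One small slip: the final identity $\mathcal H^1(\Gamma_\epsilon)=\mathcal H^1(\Gamma_\epsilon\cap\Gamma^*(u))$ is not justified for an \emph{arbitrary} simple union $\Gamma_\epsilon$ (it holds when $\Gamma_\epsilon\subset\Gamma_u$, as in \Cref{curve_truncation}). You do not need it, though: your own bound $\mathcal H^1(\Gamma_\epsilon)-\delta-\mathcal H^1(\Gamma_\epsilon\cap\{\rho<\rho_0\})$ already converges to $\mathcal H^1(\Gamma_\epsilon\cap\Gamma^*(u))$ as $\delta,\rho_0\to 0$, since $\bigcap_{\rho_0>0}\{\rho<\rho_0\}=\{\rho=0\}=\Gamma_\epsilon\setminus\Gamma^*(u)$.
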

\begin{proof}
For a set $A\subset\Omega$, we denote its cardinal by $|A|_0$. 
The proof is divided into several steps.

\textbf{Step 1.}
Denote by $R:=dist(\Gamma_\epsilon,\partial\Omega)>0$. By the definition of $\mathbb K_n$, for any $k\in\mathbb O_n\setminus\mathbb K_n$ it holds that $dist(2^{-n}k,\partial\Omega)<C_{wav}\cdot2^{-n}$, where $C_{wav}$ is specified according to Assumption \ref{assumption_R}. Since $H_n\to0$, take $N^{(1)}\in\mathbb Z^+$ such that for any $n\geq N^{(1)}$ we have $R>2H_n+C_{wav}\cdot2^{-n}$. 

The first observation is that, the boxes near the singularity $\Gamma_{\epsilon}$ are away from the boundary $\partial \Omega$. Indeed, for any $n$ and $k \in \mathbb K_n$, $dist(2^{-n}k,\Gamma_\epsilon)\leq2H_n$ implies that $dist(2^{-n}k,\partial\Omega)\geq dist(\Gamma_\epsilon,\partial\Omega)-dist(2^{-n}k,\Gamma_\epsilon)>C\cdot2^{-n},$ therefore $$\{k\in\mathbb 
 O_n:dist(2^{-n}k,\Gamma_\epsilon)\leq 2H_n\}\subset\mathbb K_n,$$
 indicating that the boxes near the singular part are in the interior of $\Omega$.

Define $\Gamma^*_\epsilon:=\Gamma_\epsilon\cap\Gamma^*$, clearly the set $\Gamma^*_\epsilon$ has a finite $\mathcal H^1$ measure. For a given threshold $\delta>0$, we take a constant $\eta_\delta>0$ such that
\begin{equation}\label{delta_robust}
\mathcal H^1(\{x\in\Gamma^*_\epsilon:\rho(x)>\eta_\delta\})\geq\mathcal H^1(\Gamma^*_\epsilon)-\delta.
\end{equation}

\textbf{Step 2.}
 Note that Proposition \ref{large_coeff_convergence} tells us the coefficients have a lower bound on the jump set $\Gamma^*$, however, the global estimates require some uniform lower bounds. Thanks to the Egorov's theorem, there exists a set $G_\delta\subset\{x\in\Gamma^*_\epsilon:\rho(x)>\eta_\delta\}$ together with a constant $N^{(2)}_\delta\in\mathbb Z^+$ such that
\begin{equation}\label{delta_convergence}
\mathcal H^1(G_\delta)\geq\mathcal H^1(\{x\in\Gamma^*_\epsilon:\rho(x)>\eta_\delta\})-\delta
\end{equation}
and
\begin{equation}\label{large_coeff}
2^{-n}c_n[u;\lfloor2^nx\rfloor+l]\geq (C_1/2)\cdot\eta_\delta
\end{equation}
for any $x\in G_\delta$, $n\geq N^{(2)}_\delta$ and $l\in\{-1,0,1\}^2$, where $C_1>0$ is a constant independent to $n$. Recall that $H_n=2^{-n/2}$ and $M_n=\frac{\alpha}{2H_n}$. It is straightforward to see that
$
\lim_{n\to\infty}\frac{(C_1/2)\cdot\eta_\delta\cdot2^n}{M_n}=+\infty,
$
and therefore we can find $N^{(3)}_\delta\in\mathbb Z^+$ such that \begin{equation}\label{estimate_m}(C_1/2)\cdot\eta_\delta\cdot2^n/M_n\geq2
\end{equation}
for any $n\geq N^{(3)}_\delta$.

\textbf{Step 3.} We now modify the set $G_{\delta}$ to a more regular set $\tilde{G}_{\delta}$, and measure the discrepancy brought about.
Let $\tilde G_\delta\subset\Gamma_\epsilon$ be a union of closed curve segments such that $\mathcal H^1(\tilde G_\delta\setminus G_\delta)<\delta$ and $\mathcal H^1(G_\delta\setminus\tilde G_\delta)<\delta$. In particular,
\begin{equation}\label{delta_segments}
\mathcal H^1(\tilde G_\delta)>\mathcal H^1(G_\delta)-\delta.
\end{equation}
We define the following index sets
\begin{equation}
\mathbb G_{\delta,n}:=\{\lfloor2^nx\rfloor:x\in G_\delta\},\ \ \ \ \tilde{\mathbb G}_{\delta,n}:=\{\lfloor2^nx\rfloor:x\in\tilde G_{\delta}\},
\end{equation}
and the auxiliary index set
\begin{equation}
\tilde {\mathbb G}^\circ_{\delta,n}:=\{k\in\tilde{\mathbb G}_{\delta,n}:\mathcal H^1(\square_{n,k}\cap\tilde G_\delta)>2^{-n}/9\},
\end{equation}
respectively, and for a (discrete index) set $A\subset\Omega$, denote $A^+:=A+\{-1,0,1\}^2$.

The goal of this (and the next) step is to show that almost all of the boxes in $\tilde {\mathbb G}_{\delta,n}$ have coefficients larger than the threshold $M_n$. 
Since $\mathcal H^1(\tilde G_\delta\setminus G_\delta)<\delta$, we have
\begin{equation}
|\tilde{\mathbb G}^\circ_{\delta,n}\setminus\mathbb G_{\delta,n}|_0\leq 9\delta\cdot 2^n.
\end{equation}

As a consequence, we have
\begin{equation}
|(\tilde{\mathbb G}^\circ_{\delta,n})^+\setminus(\mathbb G_{\delta,n})^+|_0\leq81\delta\cdot 2^n.
\end{equation}
By Lemma \ref{robust_intersection}, $\tilde{\mathbb G}_{\delta,n}\subset(\tilde{\mathbb G}^\circ_{\delta,n})^+$, therefore
\begin{equation}
|\tilde{\mathbb G}_{\delta,n}\setminus(\mathbb G_{\delta,n})^+|_0\leq81\delta\cdot2^n.
\end{equation}

\textbf{Step 4.}
Since $u_n\to u$ in $SBV(\Omega)$, there is a constant $N^{(4)}_\delta\in\mathbb Z^+$ such that $\|u_n-u\|_{SBV}<\eta_\delta\cdot\delta$ for any $n\geq N^{(4)}_\delta$. Apply Lemma~\ref{coeff_sum} to $u_n$ and $u$, we then have
\begin{equation}
2^{-2n}\sum_{k\in\mathbb K_n}|c_n[u_n;k]-c_n[u;k]|<C_2\cdot\eta_\delta\cdot\delta,
\end{equation}
where $C_2>0$ is a constant independent to $\delta$ and $n$. By definition of $\left(\mathbb G_{\delta,n}\right)^+$, \eqref{large_coeff} is amount to saying that $c_n[u;k]\geq (C_1/2)\cdot\eta_\delta\cdot2^n$ for any $k\in\left(\mathbb G_{\delta,n}\right)^+$. Therefore
\begin{eqnarray*}
&&|\{k\in\left(\mathbb G_{\delta,n}\right)^+:c_n[u_n;k]<M_n\}|_0\\
&\leq&\frac{2^{2n}C_2\cdot\eta_\delta\cdot\delta}{(C_1/2)\cdot\eta_\delta\cdot2^n-M_n}\\
&\leq&\frac{2^{2n}C_2\cdot\eta_\delta\cdot\delta}{(C_1/4)\cdot\eta_\delta\cdot2^n}\\
&=&4C_2C_1^{-1}\cdot\delta\cdot2^n
\end{eqnarray*}
for sufficiently large $n$.
Here the second line comes from the fact that $|c_n[u;k] - c_n[u_n;k]|\ge (C_1/2) \cdot \eta_{\delta} \cdot 2^n - M_n$, while the third line comes from \eqref{estimate_m}.

Denote $\mathbb G'_{\delta,n}:=\{k\in\tilde{\mathbb G}_{\delta,n}:c_n[u_n;k]\geq M_n\}$ and $\mathbb E_{\delta,n}:=\tilde{\mathbb G}_{\delta,n}\setminus\mathbb G'_{\delta,n}$. Note there $\mathbb E_{\delta, n}$ collects the box indices whose coefficients are negligible, compared to the threshold $M_n$. A combination of the previous two steps indicates that $\mathbb E_{\delta, n}$ is a small set. Concretely, we have $|\mathbb E_{\delta,n}|_0\leq C_3\delta\cdot2^n,$ where $C_3:=81+4C_2C_1^{-1}>0$.

\textbf{Step 5.} We now go back to estimate the corresponding region, with the help of the above result about the index sets. 
Denote $G'_{\delta,n}:=\tilde G_\delta\setminus\bigcup_{k\in\mathbb E_{\delta,n}}\square_{n,k}$, and we have
\begin{eqnarray*}
\mathcal H^1(G'_{\delta,n})&\geq&\mathcal H^1(\tilde G_\delta)-\sum_{k\in\mathbb E_{\delta,n}}\mathcal H^1(\tilde G_\delta\cap\square_{n,k})\\
&\geq&\mathcal H^1(\tilde G_\delta)-|\mathbb E_{\delta,n}|_0\cdot(C_4\cdot2^{-n})\\
&\geq&\mathcal H^1(\tilde G_\delta)-C_5\delta,
\end{eqnarray*}
where we applied Lemma \ref{bound_intersection}.

Denote $\widehat G'_{\delta,n}:=\{x\in\Omega:dist(x,G'_{\delta,n})\leq H_n-4\sqrt2\cdot2^{-n}\}$. Apply Lemma \ref{area_convergence} (and notice that $\lim_{n\to\infty}\frac{H_n-4\sqrt2\cdot2^{-n}}{H_n}=1$ following the definition of $H_n$), we have 
\begin{equation}
\liminf_{n\to\infty}\frac{\mathcal H^2(\widehat G'_{\delta,n})}{2H_n}=\liminf_{n\to\infty}\frac{\mathcal H^2(\widehat{G}'_{\delta,n})}{2(H_n-4\sqrt{2}\cdot2^{-n})}\geq\mathcal H^1(\tilde G_\delta)-C_5\delta,
\end{equation}
thus there is an $N^{(5)}_\delta$ such that $\mathcal H^2(\widehat G'_{\delta,n})/(2H_n)\geq \mathcal H^1(\tilde G_\delta)-(1+C_5)\delta$ for any $n\geq N^{(5)}_\delta$.

On the other hand, if $x\in\widehat G'_{\delta,n}$, we have
\begin{equation}
2^{-n}\cdot dist(\lfloor2^nx\rfloor,\mathbb G'_{\delta,n})\leq H_n-2\sqrt2\cdot2^{-n},
\end{equation}
therefore $\lfloor2^nx\rfloor\in\widehat{\mathbb G}'_{\delta,n}:=\{k\in\mathbb K_n:2^{-n}\cdot dist(k,\mathbb G'_{\delta,n})\leq H_n-2\sqrt2\cdot2^{-n}\}$. (Note that $k\in\widehat{\mathbb G}'_{\delta,n}$ implies $\hat c_n[u_n,k]\geq M_n$.) Consequently, $\widehat G'_{\delta,n}\subset\bigcup_{k\in\widehat{\mathbb G}'_{\delta,n}}\square_{n,k}$.

\textbf{Step 6.} To complete the proof, it remains to show that $\widehat{\mathbb G}'_{\delta,n}\subset\widehat\Gamma_{\epsilon,n}$. Indeed, by definition of $\tilde{\mathbb G}_{\delta,n}$ and the fact $\tilde G_\delta\subset\Gamma_\epsilon$, for any $l\in\tilde{\mathbb G}_{\delta,n}$, $dist(2^{-n}l,\Gamma_\epsilon)\leq dist(2^{-n}l,\tilde G_{\delta})\leq \sqrt2\cdot2^{-n}$. Therefore, note that $\mathbb G'_{\delta,n}\subset\tilde {\mathbb G}_{\delta,n}$, if $k\in\widehat{\mathbb G}'_{\delta,n}$, we have $dist(2^{-n}k,\Gamma_\epsilon)\leq (H_n-2\sqrt2\cdot2^{-n})+\sqrt2\cdot2^{-n}<H_n$ and thus the desired inclusion. Therefore
\begin{eqnarray*}
&&2^{-2n}\sum_{k\in\widehat\Gamma_{\epsilon,n}}\hat c_n[u_n;k]\\
&\geq&2^{-2n}\sum_{k\in\widehat{\mathbb G}'_{\delta,n}}\hat c_n[u_n;k]\\
&\geq&2^{-2n}\left|\widehat{\mathbb G}'_{\delta,n}\right|_0\cdot M_n\\
&=&\mathcal H^2\left(\bigcup_{k\in\widehat{\mathbb G}'_{\delta,n}}\square_{n,k}\right)\cdot\frac{\alpha}{2H_n}\\
&\geq&\mathcal H^2(\widehat G'_{\delta,n})\cdot\frac{\alpha}{2H_n}\\
&\geq&\alpha\left(\mathcal H^1(\tilde G_\delta)-(1+C_5)\delta\right)\\
&\geq&\alpha\ (\mathcal H^1(\Gamma^*_\epsilon)-(4+C_5)\delta).
\end{eqnarray*}
where we applied \eqref{delta_robust}, \eqref{delta_convergence} and \eqref{delta_segments} to conclude the last inequality. Since all the above argument is established for all $n\geq N(\delta)$ (where $N(\delta):=\max\{N^{(1)},N^{(2)}_\delta,N^{(3)}_\delta,N^{(4)}_\delta\}$, in particular), and $C_5$ is a plain constant, we conclude this proposition.
\end{proof}

\subsection{Off-singularity estimates}

Now we bound the energy induced by $\nabla^au$, the absolutely continuous component of the variation, which is negligible by order near the narrow singularity, but makes dominant contributions elsewhere.

\begin{proposition}[Energy lower-bound, off-singularity]\label{liminf_nonsingular}

Suppose the simple union $\Gamma_\epsilon$ satisfies residue condition $\int_{\Omega\setminus\Gamma_\epsilon}|\nabla^su|<\epsilon$, $\widehat\Gamma_{\epsilon,n}$ denotes its tubular neighbourhood by \eqref{tubular}, and $\mathbb K'_n:=\mathbb K_n\setminus\widehat{\Gamma}_{\epsilon,n}$, the lattice complement, then, for any sequence $u_n\to u$ in $SBV(\Omega)$, we have
\begin{equation}
\liminf_{n\to\infty}\left(2^{-2n}\sum_{k\in\mathbb K'_n}\hat c_n[u_n;k]\right)\geq\int_{\Omega\setminus\Gamma^*(u)}|\nabla u|-C\cdot\epsilon.
\end{equation}
\end{proposition}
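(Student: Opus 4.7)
The plan is to exploit two asymptotic facts on $\mathbb K'_n$: the threshold cap $M_n\to\infty$ becomes eventually inactive, and away from $\Gamma_\epsilon$ the coefficients are essentially controlled by $|\nabla^a u|$ up to a residue bounded by $\epsilon$. The starting observation is the elementary pointwise bound
\begin{equation*}
\hat c_n[u_n;k] \;=\; \min\bigl(M_n,\,\max_{l\in\mathbb B_{n,k}} c_n[u_n;l]\bigr) \;\geq\; \min\bigl(M_n,\, c_n[u_n;k]\bigr).
\end{equation*}
Setting the ``bad set'' $A_n:=\{k\in\mathbb K'_n : c_n[u_n;k] > M_n\}$, the bound simplifies to $\hat c_n[u_n;k]\geq c_n[u_n;k]$ on $\mathbb K'_n\setminus A_n$, so discarding the non-negative $A_n$-contribution reduces the task to bounding $2^{-2n}\sum_{k\in\mathbb K'_n\setminus A_n} c_n[u_n;k]$ from below.

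I would then show $|A_n|/2^{2n}\to 0$. Using the inclusion $A_n\subset\{k:|c_n[u_n;k]-c_n[u;k]|>M_n/2\}\cup\{k:c_n[u;k]>M_n/2\}$ and Chebyshev's inequality, this reduces to the $\ell^1$ estimates $2^{-2n}\sum_k|c_n[u_n;k]-c_n[u;k]|\leq C\|u_n-u\|_{SBV}\to 0$ from \Cref{coeff_sum}, and a uniform bound $2^{-2n}\sum_k c_n[u;k]=O(1)$ routinely obtained from $u\in BV$, each divided by $M_n\to\infty$. The same stability estimate further permits replacing $c_n[u_n;k]$ by $c_n[u;k]$ in the remaining sum, up to $o(1)$.

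To identify the limit I would split each channel coefficient via the $SBV$ decomposition $\nabla u=\nabla^a u+\nabla^s u$, writing $c_{n,j}[u;k]=c_{n,j}^a[u;k]+c_{n,j}^s[u;k]$, and use the reverse triangle inequality in channels, $c_n[u;k]\geq c_n^a[u;k]-c_n^s[u;k]$. For $k\in\mathbb K'_n$ at large $n$, the wavelet support centred near $k$, of radius $O(2^{-n})\ll H_n$, avoids $\Gamma_\epsilon$ entirely, so $c_n^s[u;k]$ only probes the residue of $\nabla^s u$ on $\Omega\setminus\Gamma_\epsilon$; a bounded-overlap count then delivers $2^{-2n}\sum_{k\in\mathbb K'_n} c_n^s[u;k]\leq C\epsilon$ from the residue hypothesis $\int_{\Omega\setminus\Gamma_\epsilon}|\nabla^s u|<\epsilon$. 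For the absolutely continuous part, \Cref{approximation_theorem} gives $2^{-2n}\sum_{k\in\mathbb K_n}c_n^a[u;k]\to\int_\Omega|\nabla^a u|$, while the corrective sums restricted to $\widehat\Gamma_{\epsilon,n}$ and $A_n$ both vanish in the limit by absolute continuity of the finite measure $|\nabla^a u|\,dx$ with respect to the Lebesgue measure, since $\mathcal H^2(\widehat\Gamma_{\epsilon,n})=O(H_n\mathcal H^1(\Gamma_\epsilon))\to 0$ and the $A_n$-squares cover a set of vanishing area.

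Assembling these pieces yields $\liminf_n 2^{-2n}\sum_{k\in\mathbb K'_n}\hat c_n[u_n;k]\geq \int_\Omega|\nabla^a u|-C\epsilon=\int_{\Omega\setminus\Gamma^*(u)}|\nabla u|-C\epsilon$, the last equality reflecting the $SBV$ structure of $\nabla u$. The delicate step, in my view, is the joint control of the bad set $A_n$: getting $|A_n|/2^{2n}\to 0$ from \Cref{coeff_sum} is what allows the stability transfer from $u_n$ back to $u$, and then absolute continuity of $|\nabla^a u|$ must be invoked a second time to eliminate the $c_n^a[u;k]$-mass concentrated on the shrinking $A_n$-region.
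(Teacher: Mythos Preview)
Your argument is correct and hits all the same ingredients as the paper---the pointwise drop of the neighbourhood maximum, the stability Lemma~\ref{coeff_sum} to pass from $u_n$ to $u$, the $SBV$ split of the coefficients with the singular residue bounded by $C\epsilon$ exactly as in Lemma~\ref{loss_singular}, and the approximation Lemma~\ref{approximation_theorem} for the absolutely continuous part (the paper packages this last step as Lemma~\ref{loss_discretization}, where the correction over $\widehat\Gamma_{\epsilon,n}$ is handled by the same absolute-continuity argument you invoke).

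The one place where your route genuinely diverges is in handling the truncation cap $M_n$. The paper keeps the truncation all the way down to the $a$-coefficients and proves a separate Lemma~\ref{loss_truncation}: it passes through the channel-wise inequality $\min\{M,\sqrt{c_1^2+c_2^2}\}\geq\sqrt{\min\{M/\sqrt2,c_1\}^2+\min\{M/\sqrt2,c_2\}^2}$ and then shows $\|v_j-\min\{M_n/\sqrt2,v_j\}\|_{L^1}\to0$ for $v_j=|\partial_j^a u|\in L^1$. You instead excise the bad set $A_n=\{c_n[u_n;\cdot]>M_n\}$, bound its cardinality by Chebyshev against the $O(1)$ total mass divided by $M_n\to\infty$, and then kill the $c_n^a$-mass over $A_n$ by absolute continuity of $|\nabla^a u|\,dx$ on a set of vanishing area. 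Both arguments ultimately rest on $M_n\to\infty$ together with $\nabla^a u\in L^1$; yours is a bit more direct and avoids the channel-wise gymnastics, while the paper's version stays closer to the truncated-coefficient structure of the model and isolates the truncation loss as a standalone lemma.
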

Recall these $\hat c$-coefficients are processed with \emph{thresholding} and \emph{neighbourhood maximum} operations apart from a standard $\ell^2-$summation across filter indices. Note $c_{n,j}[u;k]=\left<\partial_ju,I_j^{-1}\varphi^1_{j,n,k}\right>$ by \eqref{coeff_local}, we adopt here the notation $\bar\varphi_{j,n,k}:=I_j^{-1}\varphi^1_{j,n,k}$ for convenience.
Our proof sketch is summarized as follows: 
\begin{itemize}
    \item[-] Consider the target function $u\in SBV(\Omega)$ apart from the varying sequence $\{u_n\}$ (that converges to it). We control the energy difference between its regular coefficients $c_{n,j}[u;k]$ and the \emph{$a$-coefficients} $c^a_{n,j}[u;k]:=\left<\partial^a_ju,\bar\varphi_{j,n,k}\right>$, the part of contribution induced only from its absolutely continuous gradient $\nabla^au$. In Lemma \ref{loss_singular}, we will establish this bound in terms of the approximation residue $\epsilon$ (of the singular gradient $\nabla^su$'s variation).
    \item[-] Then we consider the loss generated from truncating $|c^a_{n,j}|$ at asymptotic threshold $M_n/\sqrt{2}$. By Lemma \ref{loss_truncation}, we bound this quantity by enforcing the integrability of $\nabla^a u$ (respectively, the absolute continuity of the measure $\nabla^audx$).
    \item[-] Finally, we get from $\min \{M_n/\sqrt{2}, |c_{n,j}^a| \}$ back to $\hat{c}_{n,j} := \min \{M_n/\sqrt{2}, |c_{n,j}| \}$ with another $\epsilon$ tolerance, and finish our proof by enforcing coefficients stability (Lemma \ref{coeff_sum}, to the sequence $u_n\to u$) together with the discretization loss bound (Lemma \ref{loss_discretization}).
\end{itemize}

We present each of these steps individually as a lemma, and finally explain how they together imply the desired proposition. 
\begin{lemma}\label{loss_singular}
On the partial lattice $\mathbb K'_n$, the difference between sums calculated from $c_{n,j}[u;k]$ and its non-singular component $c^a_{n,j}[u;k]$ is bounded as
$$\left|2^{-2n}\sum_{k\in\mathbb K'_n}\left(\sum_{j=1,2}|c^a_{n,j}[u;k]|^2\right)^{\frac{1}{2}}-2^{-2n}\sum_{k\in\mathbb K'_n}\left(\sum_{j=1,2}|c_{n,j}[u;k]|^2\right)^{\frac{1}{2}}\right| \leq C\cdot\epsilon.
$$
\end{lemma}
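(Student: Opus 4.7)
The plan is to bound the difference channel-by-channel. Applying the reverse triangle inequality in the $\ell^2$-norm over the channel index $j$, we have
\begin{equation*}
\left|\Bigl(\sum_{j}|c^a_{n,j}[u;k]|^2\Bigr)^{1/2}-\Bigl(\sum_{j}|c_{n,j}[u;k]|^2\Bigr)^{1/2}\right|\leq\sum_{j=1,2}\bigl|c_{n,j}[u;k]-c^a_{n,j}[u;k]\bigr|,
\end{equation*}
so it suffices to prove, for each fixed $j\in\{1,2\}$, that $2^{-2n}\sum_{k\in\mathbb K'_n}|c_{n,j}[u;k]-c^a_{n,j}[u;k]|\leq C\cdot\epsilon$. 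Because $u\in SBV(\Omega)$ has vanishing Cantor part, $\partial_j u-\partial^a_j u=\partial^s_j u$, and the distributional computation underpinning \eqref{coeff_local} identifies the difference as $c_{n,j}[u;k]-c^a_{n,j}[u;k]=\langle\partial^s_j u,\bar\varphi_{j,n,k}\rangle$, a well-defined pairing between the bounded compactly supported function $\bar\varphi_{j,n,k}:=I_j^{-1}\varphi^1_{j,n,k}$ and the finite signed Radon measure $\partial^s_j u$.

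The next step is to exploit the geometric separation between $\mathbb K'_n$ and $\Gamma_\epsilon$. By Assumption \ref{assumption_R}, $\operatorname{supp}(\bar\varphi_{j,n,k})\subset B(2^{-n}k,C_{wav}\cdot 2^{-n})$; for $k\in\mathbb K'_n$ we have $\operatorname{dist}(2^{-n}k,\Gamma_\epsilon)>H_n=2^{-n/2}$. Since $H_n\gg C_{wav}\cdot 2^{-n}$ for all sufficiently large $n$, the entire support sits inside $\Omega\setminus\Gamma_\epsilon$ eventually, and one can estimate
\begin{equation*}
\bigl|\langle\partial^s_j u,\bar\varphi_{j,n,k}\rangle\bigr|\leq\|\bar\varphi_{j,n,k}\|_\infty\cdot|\partial^s_j u|\bigl(\operatorname{supp}(\bar\varphi_{j,n,k})\cap(\Omega\setminus\Gamma_\epsilon)\bigr),
\end{equation*}
with $\|\bar\varphi_{j,n,k}\|_\infty=|I_j|^{-1}\|\varphi_j\|_\infty\cdot 2^{2n}$ directly from the definition $\varphi^1_{j,n,k}(x)=2^{2n}\varphi_j(2^n x-k)$.

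Finally I would swap sum and integral via Fubini:
\begin{equation*}
\sum_{k\in\mathbb K'_n}|\partial^s_j u|\bigl(\operatorname{supp}(\bar\varphi_{j,n,k})\cap(\Omega\setminus\Gamma_\epsilon)\bigr)=\int_{\Omega\setminus\Gamma_\epsilon}\#\bigl\{k\in\mathbb K'_n:x\in\operatorname{supp}(\bar\varphi_{j,n,k})\bigr\}\,d|\partial^s_j u|(x),
\end{equation*}
and note that the counting function inside is uniformly bounded by a constant $N_{wav}$ depending only on $C_{wav}$, since it equals the number of integer lattice points of $2^n x-\mathbb Z^2$ landing in $\operatorname{supp}(\varphi_j)\subset B(0,C_{wav})$. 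The residue hypothesis $\int_{\Omega\setminus\Gamma_\epsilon}|\nabla^s u|<\epsilon$ then bounds this integral by $N_{wav}\cdot\epsilon$, and the $2^{2n}$ from the $L^\infty$-bound cancels cleanly against the $2^{-2n}$ prefactor. Summing over $j\in\{1,2\}$ yields the claimed bound with $C=\sum_j|I_j|^{-1}\|\varphi_j\|_\infty N_{wav}$. The only real subtlety I foresee is tracking this $2^{\pm 2n}$ cancellation carefully; the remainder is routine bookkeeping around the $SBV$ decomposition and a standard lattice overlap count.
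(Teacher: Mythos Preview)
Your proposal is correct and follows essentially the same route as the paper: reverse triangle inequality in the channel index, identification of $c_{n,j}-c^a_{n,j}=\langle\partial^s_j u,\bar\varphi_{j,n,k}\rangle$, support separation forcing the pairing to live on $\Omega\setminus\Gamma_\epsilon$, and then the residue bound $\int_{\Omega\setminus\Gamma_\epsilon}|\nabla^s u|<\epsilon$. The only cosmetic difference is that the paper phrases the uniform bound on $\sum_{k}2^{-2n}\bar\varphi_{j,n,k}$ as a ``partition of unity property'' of the (non-negative) kernels, whereas you unpack it as an $L^\infty$-bound times a lattice overlap count $N_{wav}$; these are the same mechanism.
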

\begin{proof} 

First notice that 
\begin{eqnarray*}
&&\left|2^{-2n}\sum_{k\in\mathbb K_n\setminus\widehat\Gamma_{\epsilon,n}}\left(\sum_{j=1,2}|c^a_{n,j}[u;k]|^2\right)^{\frac{1}{2}}-2^{-2n}\sum_{k\in\mathbb K_n\setminus\widehat\Gamma_{\epsilon,n}}\left(\sum_{j=1,2}|c_{n,j}[u;k]|^2\right)^{\frac{1}{2}}\right|\\
&\leq&2^{-2n}\sum_{k\in\mathbb K_n\setminus\widehat\Gamma_{\epsilon,n}}\left(\sum_{j=1,2}|c^a_{n,j}[u;k]-c_{n,j}[u;k]|^2\right)^{\frac{1}{2}}\\
&\leq&2^{-2n}\sum_{k\in\mathbb K_n\setminus\widehat\Gamma_{\epsilon,n}}\sum_{j=1,2}\left<|\partial^s_ju|,\bar\varphi_{j,n,k}\right>.
\end{eqnarray*}

Here $\partial_j^s u$ denotes $\nabla^s u \cdot e_j$. As a result, we have

\begin{eqnarray*}
&& 2^{-2n}\sum_{k\in\mathbb K_n\setminus\widehat\Gamma_{\epsilon,n}}\sum_{j=1,2}\left<|\partial^s_ju|,\bar\varphi_{j,n,k}\right>\\
&\leq&\left<|\nabla^su|,\sum_{j=1,2}\sum_{k\in\mathbb K_n\setminus\widehat\Gamma_{\epsilon,n}}2^{-2n}\bar\varphi_{j,n,k}\right> \\
&\leq&\left<|\nabla^su|,C\cdot\chi_{\{x\in\Omega:dist(x,\Gamma_\epsilon)\geq 2^{-n}\}}\right> \\
&\leq&C\cdot\epsilon.
\end{eqnarray*}
Here the third line comes from the partition of unity property of $\bar\varphi_{j,n,k}$, its (pointwise) non-negativity and the fact that its support radius is smaller than $c_\varphi2^{-n}$ for some constant $c_\varphi$, and the last line comes from \eqref{residue}. 
\end{proof}

Next, we consider the effects of the truncation.
\begin{lemma}\label{loss_truncation} Concerning $a$-coefficients $c^a_{n,j}$ and its truncation, it holds that 
$$\Xi_n:=\left|2^{-2n}\sum_{k\in\mathbb K_n}\left(\sum_{j=1,2}|c^a_{n,j}[u;k]|^2\right)^{\frac{1}{2}}-2^{-2n}\sum_{k\in\mathbb K_n}\left(\sum_{j=1,2}\min\left\{\frac{M_n}{\sqrt2},|c^a_{n,j}[u;k]|\right\}^2\right)^{\frac{1}{2}}\right| \to 0$$ as $n \to \infty$.
\end{lemma}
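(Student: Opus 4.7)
\medskip
\noindent\textbf{Proof proposal.} The plan is to bound $\Xi_n$ by a coordinate-wise truncation excess, and then to split the driving density $\partial^a_j u$ into an $L^\infty$-bounded piece (for which the truncation is eventually inactive) and an $L^1$-small residual (whose contribution is absorbed in an $n$-independent way).

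First I would reduce to a coordinatewise excess. Using $1$-Lipschitz continuity of the Euclidean norm together with the elementary identity $x-\min\{M,x\}=\max\{0,x-M\}$, the $k$-th integrand in $\Xi_n$ is majorized by $\sum_{j=1,2}\max\{0,|c^a_{n,j}[u;k]|-M_n/\sqrt 2\}$. It therefore suffices to prove, for each $j\in\{1,2\}$, that
\begin{equation*}
S_{n,j}:=2^{-2n}\sum_{k\in\mathbb K_n}\max\{0,|c^a_{n,j}[u;k]|-M_n/\sqrt 2\}\longrightarrow 0.
\end{equation*}

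Next I would exploit absolute continuity of $\nabla^a u$: the component $\partial^a_j u$ is realized by an $L^1(\Omega)$ density. For each cutoff $N>0$, split $\partial^a_j u=g^N+h^N$ with $g^N:=\partial^a_j u\cdot\chi_{\{|\partial^a_j u|\le N\}}$, so that $\|g^N\|_\infty\le N$ and, by absolute continuity of the Lebesgue integral, $\|h^N\|_{L^1(\Omega)}\to 0$ as $N\to\infty$. By linearity of the coefficient map, $c^a_{n,j}[u;k]=\langle g^N,\bar\varphi_{j,n,k}\rangle+\langle h^N,\bar\varphi_{j,n,k}\rangle$, and the $g^N$-part is controlled uniformly in $k$ and $n$ by $\|g^N\|_\infty\cdot\|\bar\varphi_{j,n,k}\|_{L^1}\le CN$, with $C$ depending only on $\varphi_j$ (recall $\|\bar\varphi_{j,n,k}\|_{L^1}=|I_j|^{-1}\|\varphi_j\|_{L^1}$ is independent of $n,k$). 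Since $M_n=\alpha\cdot 2^{n/2-1}\to\infty$ under the parameter choice of Theorem~\ref{main_theorem}, for any fixed $N$ there is $n_0(N)$ past which $CN\le M_n/(2\sqrt 2)$; then, for all $n\ge n_0(N)$, the triangle inequality yields
\begin{equation*}
\max\{0,|c^a_{n,j}[u;k]|-M_n/\sqrt 2\}\le|\langle h^N,\bar\varphi_{j,n,k}\rangle|.
\end{equation*}

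Finally I would close the estimate with the partition-of-unity bound that already appears in the proof of Lemma~\ref{loss_singular}. Summing the previous inequality over $k$ and exchanging sum and integral, the bounded-overlap of the translates of $\varphi_j$ (compactly supported by Assumption~\ref{assumption_R}(b)) gives
\begin{equation*}
S_{n,j}\le\int_\Omega|h^N(x)|\sum_{k}|I_j|^{-1}|\varphi_j(2^n x-k)|\,dx\le C\|h^N\|_{L^1}.
\end{equation*}
Taking $\limsup_n$ first and then letting $N\to\infty$ drives $S_{n,j}$, and hence $\Xi_n$, to zero. The main obstacle is identifying the correct $L^\infty$--$L^1$ decomposition: once one recognises that the truncation is eventually inactive on the bounded part $g^N$ while the excess is dominated pointwise by the (uniformly integrable) residual coefficients of $h^N$, the remainder follows from absolute continuity of the integral and the standard bounded-overlap count.
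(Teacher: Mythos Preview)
Your argument is correct. The initial reduction to the coordinatewise excess $\sum_{j}\max\{0,|c^a_{n,j}|-M_n/\sqrt2\}$ coincides with the paper's first two steps (reverse triangle inequality followed by $\ell^2\le\ell^1$). After that, however, the two proofs diverge. The paper uses the non-negativity of $\bar\varphi_{j,n,k}$ and monotonicity of $x\mapsto x-\min\{M_n/\sqrt2,x\}$ to pass from $|\langle\partial^a_ju,\bar\varphi_{j,n,k}\rangle|$ to $\langle|\partial^a_ju|,\bar\varphi_{j,n,k}\rangle$, then rewrites the discrete sum as $\|v_{j,n}-\tilde v^{(n)}_{j,n}\|_{L^1}$ for suitable piecewise-constant functions and closes by a telescope invoking the approximation Lemma~\ref{approximation_theorem} together with contractivity of the truncation operator. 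Your route instead splits the density $\partial^a_ju=g^N+h^N$ into an $L^\infty$-bounded part, on which the truncation is eventually inactive because $M_n\to\infty$, and an $L^1$-small residual, whose coefficient sum is controlled by the same bounded-overlap estimate already used in Lemma~\ref{loss_singular}. Your argument is slightly more self-contained in that it avoids Lemma~\ref{approximation_theorem} altogether; the paper's function-space telescope, on the other hand, makes the role of $|\partial^a_ju|\in L^1$ more transparent and reuses machinery set up earlier. Both reach the conclusion with comparable effort.
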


\begin{proof}

For any $M, c_1, c_2 >0$, we have the following estimation
\begin{equation}
\min\left\{M,\sqrt{c_1^2+c_2^2}\right\}\geq\sqrt{\min\{M/\sqrt2,c_1\}^2+\min\{M/\sqrt2,c_2\}^2},
\end{equation}
which can be verified straightforwardly. 

By the triangle inequality, it holds that
\begin{eqnarray*}
    &&\left|2^{-2n}\sum_{k\in\mathbb K_n}\left(\sum_{j=1,2}|c^a_{n,j}[u;k]|^2\right)^{\frac{1}{2}}-2^{-2n}\sum_{k\in\mathbb K_n}\left(\sum_{j=1,2}\min\left\{\frac{M_n}{\sqrt2},|c^a_{n,j}[u;k]|\right\}^2\right)^{\frac{1}{2}}\right|\\
&\leq&2^{-2n}\left|\sum_{k\in\mathbb K_n}\left(\sum_{j=1,2}\left||c^a_{n,j}[u;k]|-\min\left\{\frac{M_n}{\sqrt2},|c^a_{n,j}[u;k]|\right\}\right|^2\right)^\frac{1}{2}\right|.
\end{eqnarray*}
Since a vector's $\ell^2$-norm is no more than its $\ell^1$-norm, it suffices to estimate the following quantity,
\[2^{-2n}\sum_{j=1,2}\sum_{k\in\mathbb K_n}\left(|c^a_{n,j}[u;k]|-\min\left\{\frac{M_n}{\sqrt2},|c^a_{n,j}[u;k]|\right\}\right),\]
which, by definition, is equal to
\[2^{-2n}\sum_{j=1,2}\sum_{k\in\mathbb K_n}\left(\left|\left<\partial^a_ju,\bar\varphi_{j,n,k}\right>\right|-\min\left\{\frac{M_n}{\sqrt2},\left|\left<\partial^a_ju,\bar\varphi_{j,n,k}\right>\right|\right\}\right).\]

Since $\bar\varphi_{j,n,k}$ is non-negative, the mapping $f \mapsto \langle f, \bar\varphi_{j,n,k} \rangle$ is monotone\footnote{A linear functional $L$ is monotone, if $f \ge g$ implies $L(f) \ge L(g)$.}. Combining with the fact that $x - \min\{M_n/\sqrt{2}, x\}$ is increasing, we obtain that 
\begin{eqnarray*}
    && 2^{-2n}\sum_{j=1,2}\sum_{k\in\mathbb K_n}\left(\left|\left<\partial^a_ju,\bar\varphi_{j,n,k}\right>\right|-\min\left\{\frac{M_n}{\sqrt2},\left|\left<\partial^a_ju,\bar\varphi_{j,n,k}\right>\right|\right\}\right) \\ 
    &\leq&2^{-2n}\sum_{j=1,2}\sum_{k\in\mathbb K_n}\left(\left<|\partial^a_ju|,\bar\varphi_{j,n,k}\right>-\min\left\{\frac{M_n}{\sqrt2},\left<|\partial^a_ju|,\bar\varphi_{j,n,k}\right>\right\}\right),
\end{eqnarray*}
since $\partial^a_ju \le |\partial^a_ju|$.

The remaining proof is based on converting the coefficients to the functions. To this end, set $$v_j(x):=|\partial^a_ju(x)|,$$
\begin{equation}
v_{j,n}(x):=\sum_{k\in\mathbb K_n}\left<|\partial^a_ju|,\bar\varphi_{j,n,k}\right>\chi_{\square_{n,k}}(x),
\end{equation}
and their truncated counterparts,
\begin{equation}
\tilde v^{(n)}_j(x):=\min\{M_n/\sqrt2,v_j(x)\},
\end{equation}
\begin{equation}
\tilde v^{(n)}_{j,n}(x):=\min\{M_n/\sqrt2,v_{j,n}(x)\},
\end{equation}
respectively.
Thus we can rewrite our upper-bound in integral form as
$$2^{-2n}\sum_{j=1,2}\sum_{k\in\mathbb K_n}\left(\left<|\partial^a_ju|,\bar\varphi_{j,n,k}\right>-\min\left\{\frac{M_n}{\sqrt2},\left<|\partial^a_ju|,\bar\varphi_{j,n,k}\right>\right\}\right)=\sum_{j = 1,2} \|v_{j,n} - \tilde v_{j,n}^{(n)}\|_{L^1(\Omega)}\ \ =:\Xi'_n.$$

We then consider the telescope expansion 
$$ \sum_{j=1,2}\|v_{j,n}-\tilde v^{(n)}_{j,n}\|_{L^1(\Omega)}
 \leq \sum_{j=1,2}\left(\|v_{j,n}-v_j\|_{L^1(\Omega)}+\|v_j-\tilde v^{(n)}_j\|_{L^1(\Omega)}+\|\tilde v^{(n)}_j-\tilde v^{(n)}_{j,n}\|_{L^1(\Omega)}\right).
 $$

By \Cref{approximation_theorem}, $\|v_{j,n}-v_j\|_{L^1(\Omega)}\to0$. Moreover, since the truncation operator is contractive, it follows that $\|\tilde v^{(n)}_j-\tilde v^{(n)}_{j,n}\|_{L^1(\Omega)}\leq\|v-v_{j,n}\|_{L^1(\Omega)}$ and therefore $\|\tilde v^{(n)}_j-\tilde v^{(n)}_{j,n}\|_{L^1(\Omega)}\to0$. Since $v_j\in L^1(\Omega)$ and $v_j(x)\in [0,+\infty)$ for a.e. $x\in\Omega$, the second term $\|v_j-\tilde v^{(n)}_j\|_{L^1(\Omega)}\to0$ by definition of Lebesgue integration. In summary, we conclude that our bound $\Xi'_n\to0$, therefore $\Xi_n\to0$.

\end{proof}

Next, we show that the coefficients $c_{n,j}^a$ are indeed an effective discretization of the absolutely continuous gradient $\nabla^a u$.
\begin{lemma}\label{loss_discretization} The term 
$$\Theta_n := \left|\sum_{k\in\mathbb K'_n}\left(\sum_{j=1,2}|c^a_{n,j}[u;k]|^2\right)^{\frac{1}{2}} - \int_\Omega|\nabla^au| \right|$$  converges to $0$ as $n \to \infty.$ 
\end{lemma}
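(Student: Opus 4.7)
The plan is to convert the discrete sum into a Lebesgue integral via Lemma~\ref{approximation_theorem}, combine the two channels through the 1-Lipschitz map $\Phi(a,b)=\sqrt{a^{2}+b^{2}}$, and finally absorb the exception region $\widehat{\Gamma}_{\epsilon,n}$ using absolute continuity of $|\nabla^{a}u|\,\mathrm{d}\mathcal{L}^{2}$. (I will read the displayed formula with the natural normalization $2^{-2n}$ in front of the sum, which is what is needed for dimensional consistency with $\int_\Omega|\nabla^{a}u|$.)

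First I would leverage the identity $c^{a}_{n,j}[u;k]=\langle\partial_{j}^{a}u,\bar\varphi_{j,n,k}\rangle$, noting that $\bar\varphi_{j,n,k}$ is an approximate-identity kernel of scale $2^{-n}$ with unit mass and non-negative. Applying Lemma~\ref{approximation_theorem} to each $\partial_{j}^{a}u\in L^{1}(\Omega)$, with the partition of unity supplied by the indicators $\chi_{\square_{n,k}}$, I conclude that the piecewise-constant approximants
\[
v_{j,n}(x):=\sum_{k\in\mathbb K_{n}}c^{a}_{n,j}[u;k]\,\chi_{\square_{n,k}}(x)
\]
converge to $\partial_{j}^{a}u$ in $L^{1}(\Omega)$.

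Next I would exploit the fact that $\Phi$ is 1-Lipschitz to transfer this $L^{1}$ convergence to $\Phi(v_{1,n},v_{2,n})\to|\nabla^{a}u|$ in $L^{1}(\Omega)$. Since $\Phi(v_{1,n},v_{2,n})$ takes the constant value $(\sum_{j}|c^{a}_{n,j}[u;k]|^{2})^{1/2}$ on each box $\square_{n,k}$, the normalized Riemann sum over the full lattice $\mathbb K_{n}$ equals the Lebesgue integral $\int_{\Omega_{n}}\Phi(v_{1,n},v_{2,n})\,dx$, where $\Omega_{n}=\bigcup_{k\in\mathbb K_{n}}\square_{n,k}$ satisfies $\mathcal H^{2}(\Omega\setminus\Omega_{n})\to 0$. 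Consequently this Riemann sum tends to $\int_{\Omega}|\nabla^{a}u|\,dx$.

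Finally, to pass from the full lattice to $\mathbb K'_{n}=\mathbb K_{n}\setminus\widehat{\Gamma}_{\epsilon,n}$, I would telescope
\[
\int_{\widehat{\Gamma}_{\epsilon,n}}\Phi(v_{1,n},v_{2,n})\,dx\;\le\;\bigl\|\Phi(v_{1,n},v_{2,n})-|\nabla^{a}u|\bigr\|_{L^{1}(\Omega)}+\int_{\widehat{\Gamma}_{\epsilon,n}}|\nabla^{a}u|\,dx.
\]
The first term vanishes by the $L^{1}$ convergence just established; the second vanishes because the tubular neighborhood of the simple union of $C^{1}$ curves $\Gamma_{\epsilon}$ has Lebesgue measure of order $2H_{n}\cdot\mathcal H^{1}(\Gamma_{\epsilon})\to 0$, while $|\nabla^{a}u|\,\mathrm{d}\mathcal{L}^{2}$ is absolutely continuous with respect to $\mathcal{L}^{2}$, so dominated convergence forces the second term to zero. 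The main obstacle is this last step: $L^{1}$ convergence alone does not instantly kill $\int_{E_{n}}f_{n}$ for shrinking $E_{n}$, but the telescope transfers the integration against the shrinking sets to the fixed limit $|\nabla^{a}u|$, at which point absolute continuity closes the argument.
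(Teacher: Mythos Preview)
Your proposal is correct and follows essentially the same approach as the paper: both invoke Lemma~\ref{approximation_theorem} to get $L^1$ convergence of the piecewise-constant approximants $v_{j,n}\to\partial^a_ju$, pass through the 1-Lipschitz map $(a,b)\mapsto\sqrt{a^2+b^2}$, and handle the shrinking region via absolute continuity of $|\nabla^a u|\,d\mathcal L^2$. The only cosmetic difference is that the paper directly compares $\sum_{k\in\mathbb K'_n}(\cdots)$ with $\int_{K'_n}|\nabla^a u|$ and then adds back $\int_{\Omega\setminus K'_n}|\nabla^a u|$, whereas you first establish convergence over the full lattice and then subtract the contribution from the tubular neighbourhood; the ingredients and logic are identical, and your observation about the missing $2^{-2n}$ normalization is well taken (the paper's displayed statement shares this typo).
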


\begin{proof}
Note that $\partial^a_ju\in L^1(\Omega)$, it again follows from \Cref{approximation_theorem} that (for each $j=1,2$)
\begin{equation*}
\lim_{n\to\infty}\left\|\sum_{k\in\mathbb O_n}\left<\partial^a_ju,\bar\varphi_{j,n,k}\right>\chi_{\square_{n,k}}-\partial^a_ju\right\|_{L^1(\Omega)}\to0.
\end{equation*}
Denote $\mathbb K'_n:=\mathbb K_n\setminus\widehat\Gamma_{\epsilon,n}$ and $K'_n:=\bigcup_{k\in\mathbb K'_n}\square_{n,k}$, since 
\begin{eqnarray*}
    &&\left|\sum_{k\in\mathbb K'_n}\left(\sum_{j=1,2}|c^a_{n,j}[u;k]|^2\right)^{\frac{1}{2}}-\int_\Omega|\nabla^au|\right|\\
&\leq&\left|\sum_{k\in\mathbb K'_n}\left(\sum_{j=1,2}|c^a_{n,j}[u;k]|^2\right)^{\frac{1}{2}}-\int_{K'_n}|\nabla^au|\right|+\int_{\Omega\setminus K'_n}|\nabla^au|
\end{eqnarray*}
The second term at the right hand side vanishes since the Lebesgue measure $|\Omega\setminus K_n'|\to0$ as $n \to \infty$, and $|\nabla^au|$ is absolutely continuous by definition. Therefore, it suffices to estimate the first term, with which we have:
\begin{eqnarray*}
&&\left|\sum_{k\in\mathbb K'_n}\left(\sum_{j=1,2}|c^a_{n,j}[u;k]|^2\right)^{\frac{1}{2}}-\int_{K'_n}|\nabla^au|\right|\\
&=&\left|\int_{K'_n}\left(\sum_{j=1,2}\left|\sum_{k\in\mathbb O_n}\left<\partial^a_ju,\bar\varphi_{j,n,k}\right>\chi_{\square_{n,k}}\right|^2\right)^{\frac{1}{2}}-\int_{K'_n}\left(\sum_{j=1,2}|\partial^a_ju|^2\right)^{\frac{1}{2}}\right|\\
&\leq&\int_{K'_n}\sum_{j=1,2}\left|\sum_{k\in\mathbb K'_n}\left<\partial^a_ju,\bar\varphi_{j,n,k}\right>\chi_{\square_{n,k}}-\partial^a_ju\right|\\
&\leq&\sum_{j=1,2}\int_\Omega\left|\sum_{k\in\mathbb K'_n}\left<\partial^a_ju,\bar\varphi_{j,n,k}\right>\chi_{\square_{n,k}}-\partial^a_ju\right|
\end{eqnarray*}

Since $\partial^a_ju\cdot dx$ is absolutely continuous for $j=1,2$, the measure $|\nabla^au|dx$ is also absolutely continuous. 
Consequently we have $\Theta_n \to 0$.
\end{proof}

We are now ready to put all the pieces together to obtain our global off-singularity control.

\begin{proof}[Proof of \Cref{liminf_nonsingular}]

Assume $\delta>0$, let $N_\delta\in\mathbb Z^+$ be a number such that for any $n>N_\delta$, we have $\Xi_n,\Theta_n<\delta$ (existence guaranteed by Lemma \ref{loss_truncation} and Lemma \ref{loss_discretization} together), and $\|u_n-u\|_{SBV(\Omega)}<\delta$ (fulfilled as $u_n\to u$). In this case, we have
\begin{eqnarray*}
&&2^{-2n}\sum_{k\in\mathbb K'_n}\hat c_n[u_n;k]\\
&=&2^{-2n}\sum_{k\in\mathbb K'_n}\min\{M_n,c_n[u_n;k]\}\\
&\geq&2^{-2n}\sum_{k\in\mathbb K'_n}\min\{M_n,c_n[u;k]\}-2^{-2n}\sum_{k\in\mathbb K_n}|c_n[u_n;k]-c_n[u;k]| \\
&\geq&2^{-2n}\sum_{k\in\mathbb K'_n}\min\{M_n,c_n[u;k]\} - C_1 \delta,
\end{eqnarray*}
where Lemma \ref{coeff_sum} is applied in order to obtain the last inequality.

By triangular inequality and Lemma \ref{loss_singular} it holds that 
\begin{eqnarray*}
    &&2^{-2n}\sum_{k\in\mathbb K'_n}\min\{M_n,c_n[u;k]\} \\
    &\geq&2^{-2n}\sum_{k\in\mathbb K'_n}\min\{M_n,c^a_n[u;k]\} -2^{-2n}\sum_{k\in\mathbb K'_n}|c^a_n[u;k]-c_n[u;k]| \\
    &\geq&2^{-2n}\sum_{k\in\mathbb K'_n}\left(\sum_{j=1,2}\min\left\{\frac{M_n}{\sqrt2},|c^a_{n,j}[u;k]|\right\}^2\right)^{\frac{1}{2}}-C\cdot\epsilon 
\end{eqnarray*}

For any $k\in\mathbb K_n$, it follows that 
\begin{equation}
\sum_{k\in\mathbb K'_n}\left(\left(\sum_{j=1,2}|c^a_{n,j}[u;k]|^2\right)^{1/2}-\left(\sum_{j=1,2}|\min\{M_n/\sqrt{2},c^a_{n,j}[u;k]\}|^2\right)^{1/2}\right)\ \ \leq\Xi_n<\delta
\end{equation}
by our assumption on $\delta$. Moreover,
\begin{eqnarray*}
      &&2^{-2n}\sum_{k\in\mathbb K'_n}\left(\sum_{j=1,2}\min\left\{\frac{M_n}{\sqrt2},|c^a_{n,j}[u;k]|\right\}^2\right)^{\frac{1}{2}}-C\cdot\epsilon \\ 
      &\geq&2^{-2n}\sum_{k\in\mathbb K'_n}\left(\sum_{j=1,2}|c^a_{n,j}[u;k]|^2\right)^{\frac{1}{2}}-(1+C_1)\delta-C\cdot\epsilon.
\end{eqnarray*}
Finally, since $\Theta_n<\delta$, we have
\begin{eqnarray*}
    && 2^{-2n}\sum_{k\in\mathbb K'_n}\left(\sum_{j=1,2}|c^a_{n,j}[u;k]|^2\right)^{\frac{1}{2}}-(1+C_1)\delta-C\cdot\epsilon \\ 
    &\geq&\int_{\Omega\setminus\Gamma^*}|\nabla u|-(2+C_1)\delta-C\cdot\epsilon,
\end{eqnarray*}
and eventually we obtain our desired limit-infimum. 
\end{proof}

\subsection{Asymptotic lower-bound: the universal $\Gamma$-property}
Totally, we have the following asymptotic bound for the regularity term.
\begin{proposition}
For any sequence $u_n\to u$ in $SBV(\Omega)$, we have
\begin{equation}
\liminf_{n\to\infty}R_n(u_n)\geq R(u),
\end{equation}
where $R_n$ and $R$ denote regularity terms defined by \eqref{definition_R} and \eqref{definition_R_n} respectively.
\end{proposition}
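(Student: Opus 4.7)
The plan is to combine the two regime-specific lower bounds, Proposition \ref{liminf_singular} (near singularity) and Proposition \ref{liminf_nonsingular} (off singularity), through a splitting of the lattice $\mathbb K_n$ dictated by an $\epsilon$-approximation $\Gamma_\epsilon$ of the full jump set $\Gamma_u$. First I would invoke Lemma \ref{curve_truncation} to produce, for each prescribed $\epsilon>0$, a simple finite union of $C^1$-curve segments $\Gamma_\epsilon\subset\Gamma_u$ satisfying the coverage condition \eqref{coverage} and the residue bound \eqref{residue}. Its $H_n$-tubular lattice neighbourhood $\widehat\Gamma_{\epsilon,n}$ (as defined in \eqref{tubular}) provides the desired partition
\begin{equation*}
R_n(u_n)=2^{-2n}\sum_{k\in\widehat\Gamma_{\epsilon,n}}\hat c_n[u_n;k]+2^{-2n}\sum_{k\in\mathbb K_n\setminus\widehat\Gamma_{\epsilon,n}}\hat c_n[u_n;k].
\end{equation*}

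Next I would apply Proposition \ref{liminf_singular} to the first sum, yielding the asymptotic lower bound $\alpha\cdot\mathcal H^1(\Gamma_\epsilon\cap\Gamma^*_u)$, and Proposition \ref{liminf_nonsingular} to the second sum, yielding $\int_{\Omega\setminus\Gamma^*_u}|\nabla u|-C\cdot\epsilon$. Since $\liminf$ is superadditive, adding these gives
\begin{equation*}
\liminf_{n\to\infty}R_n(u_n)\geq\int_{\Omega\setminus\Gamma^*_u}|\nabla u|+\alpha\cdot\mathcal H^1(\Gamma_\epsilon\cap\Gamma^*_u)-C\cdot\epsilon,
\end{equation*}
valid for every $\epsilon>0$.

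It then remains to pass to $\epsilon\to 0$. Here I need to handle the two branches of \eqref{coverage} separately. When $\mathcal H^1(\Gamma^*_u)<\infty$, the bound $\mathcal H^1(\Gamma_\epsilon\cap\Gamma^*_u)\geq(1-\epsilon)\mathcal H^1(\Gamma^*_u)$ tends to $\mathcal H^1(\Gamma^*_u)$, whence the right-hand side converges to $R(u)$. When $\mathcal H^1(\Gamma^*_u)=\infty$, the bound $\mathcal H^1(\Gamma_\epsilon\cap\Gamma^*_u)\geq\epsilon^{-1}$ diverges, forcing $\liminf_{n\to\infty}R_n(u_n)=+\infty=R(u)$. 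In either case, since the left side does not depend on $\epsilon$, the conclusion $\liminf_{n\to\infty}R_n(u_n)\geq R(u)$ follows.

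The only delicate point in assembling the argument is to check that the two regime propositions can be applied on the \emph{same} split: Proposition \ref{liminf_singular} requires only a simple curve union, which $\Gamma_\epsilon$ is by (i); Proposition \ref{liminf_nonsingular} requires the residue condition $\int_{\Omega\setminus\Gamma_\epsilon}|\nabla^s u|<\epsilon$, which is guaranteed by (iii). Thus both hypotheses are satisfied by the same $\Gamma_\epsilon$, and the splitting strategy is internally consistent. The main (already-resolved) obstacle was the asymmetric treatment of jumps of arbitrary fine structure, which is precisely what Lemma \ref{curve_truncation} absorbs for us; once it is in hand, the present assembly is essentially bookkeeping.
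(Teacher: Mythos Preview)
Your proposal is correct and follows essentially the same approach as the paper: split the lattice via the tubular neighbourhood $\widehat\Gamma_{\epsilon,n}$ of the simple curve union $\Gamma_\epsilon$ from Lemma~\ref{curve_truncation}, apply Propositions~\ref{liminf_singular} and~\ref{liminf_nonsingular} to the two pieces, use superadditivity of $\liminf$, and then let $\epsilon\to0$ handling the finite and infinite $\mathcal H^1(\Gamma^*_u)$ cases separately. The only cosmetic difference is that the paper branches on the two cases \emph{before} writing down the combined estimate (and in the infinite-measure case discards the off-singularity contribution entirely, using only Proposition~\ref{liminf_singular}), whereas you combine first and branch afterwards; the substance is identical.
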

\begin{proof}
Assume $\epsilon>0$. We separately consider two cases concerning the (essential) jump singularity $\Gamma^*(u)$, in which it has finite and infinite $\mathcal H^1$-measures respectively.

\textbf{Case 1.} If $\mathcal H^1(\Gamma^*(u))<\infty$, we define $\Gamma_\epsilon$ as an $\epsilon$-singularity set such that \eqref{finite_curve}-\eqref{residue} are satisfied. In particular, $\mathcal H^1(\Gamma^*(u)\cap\Gamma_\epsilon)>(1-\epsilon)\mathcal H^1(\Gamma^*(u))$ as is specified in \eqref{coverage}. Consequently, we define $\widehat\Gamma_{\epsilon,n}$ according to \eqref{tubular}. Combining Proposition \ref{liminf_singular} and Proposition \ref{liminf_nonsingular}, we get
\begin{eqnarray}
&&\liminf_{n\to\infty}R_n(u_n)\\
&=&\liminf_{n\to\infty}\left(2^{-2n}\sum_{k\in\mathbb K_n}\hat c_n[u_n;k]\right)\\
&\geq&\liminf_{n\to\infty}\left(2^{-2n}\sum_{k\in\mathbb K_n\setminus\widehat\Gamma_{\epsilon,n}}\hat c_n[u_n;k]\right)+\liminf_{n\to\infty}\left(2^{-2n}\sum_{k\in\widehat\Gamma_{\epsilon,n}}\hat c_n[u_n;k]\right)\\
&\geq&\left(\int_{\Omega\setminus\Gamma^*(u)}|\nabla u|-C\cdot\epsilon\right)+\alpha\cdot\mathcal H^1(\Gamma^*(u)\cap\Gamma_\epsilon)\\
&\geq&R(u)-(C+\alpha\cdot\mathcal H^1(\Gamma^*(u)))\cdot\epsilon.
\end{eqnarray}
In the lase inequality, the constant multiple of $\epsilon>0$ can be arbitrarily small, thus we conclude our limit-infimum.

\textbf{Case 2.} If $\mathcal H^1(\Gamma^*(u))=\infty$, we define $\Gamma_\epsilon$ and $\widehat\Gamma_{\epsilon,n}$ accordingly. In particular, we specify $\mathcal H^1(\Gamma^*(u)\cap\Gamma_\epsilon)>\epsilon^{-1}$ instead, by \eqref{coverage}. Then we apply Proposition \ref{liminf_singular} only, and achieve
\begin{eqnarray}
&&\liminf_{n\to\infty}R_n(u_n)\\
&\geq&\liminf_{n\to\infty}\left(2^{-2n}\sum_{k\in\widehat\Gamma_{\epsilon,n}}\hat c_n[u_n;k]\right)\\
&\geq&\alpha\cdot\epsilon^{-1}.
\end{eqnarray}
Since $\epsilon^{-1}>0$ can be arbitrarily large, we obtain our desired divergence.
\end{proof}

Then Proposition~\ref{liminf} follows from the above proposition and Theorem~\ref{limfid}.

\subsection{Tightness argument: the existence $\Gamma$-property
}\label{section_tightness}

In the rest of this section, we complete the other half of our desired $\Gamma$-convergence. With the help of an approximation proposition introduced in \cite{de2017approximation}, we strive to prove Proposition \ref{limsup}, which says there does exist a sequence that attains (thus validates) the proposed $\Gamma$-limit.

Recall that if $u\in SBV(\Omega)$, its essential singularity set, denoted $\Gamma^*$ (or $\Gamma^*_u$, if necessary), must be $C^1$-rectifiable. If its Hausdorff measure $\mathcal H^1(\Gamma^*)=\infty$, then we immediately have $R_n(u)=\infty$ and our purpose is fulfilled simply by defining $u^*_n:=u$ for all $n$. We thus concentrate only on the more interesting case $\mathcal H^1(\Gamma^*)<\infty$.

By Lemma \ref{curve_truncation}, we first take a \emph{simple union} $\Gamma_m=\Gamma_m(u)$ (\emph{i.e.} finite and without intersections) at each precision level $m$ in the domain $\Omega$, such that approximation properties: $\mathcal H^1(\Gamma_m)<\mathcal H^1(\Gamma^*)+2^{-m}$, $\mathcal H^1(\Gamma^*\setminus\Gamma_m)<2^{-m}$ and $\int_{\Omega\setminus\Gamma_m}|\nabla^su|<2^{-m}$, are satisfied. We first mollify off all singularity of $u$ apart from $\Gamma_m$, getting an object called $u'_m$, and then perform a further surgery on each $u'_m$ near $\Gamma_m$, resulting in $u''_m$, whose gradient possesses necessary boundedness away from singularity. Finally, we rearrange the index and get our desired sequence $u^*_n.$

Since $\Gamma_m$ is a finite union of $C^1$-curves, there exists $\delta_m < 2^{-m}$ such that the distance function 
\[
d_m(x) = dist(x, \Gamma_m)
\]
is $C^1$-regular where $d_m(x) < 3\delta_m$. Consider a spline $g_m \in C^{\infty}(\mathbb R^+)$ such that $g_m(d) = d$ when $d \in [0,\delta_m]$, but $g_m(d) = 2\delta_{m}$ when $d \in [2\delta_m,+\infty)$, and $g_m'(d) < 2$ for all $d$. Define $r_m(x) = g_m(d_m(x))$, and 
\begin{equation}\label{eqn_conv}
u'_m(x)=u*'_m\eta\ (x):=\int_{\mathbb R^2}u(x-r_m(x)y)\eta(y)dy.
\end{equation}

This operation guarantees a number of desirable asymptotic properties that are comparable to those of ordinary convolution (with a fixed kernel). Relevant details are postponed to Appendix \ref{proof_sequence_properties}, and the interested reader may refer to \cite{de2017approximation} for a more extensive account of this technique.

\begin{proposition}\label{sequence_properties}
\cite{de2017approximation} Concerning the mollifying operation $-*'_m\eta$ defined through Equation \eqref{eqn_conv}, the following statements hold,
\begin{enumerate}
\item For any $f\in L^1(\Omega)$, we have $\lim_{m\to\infty}\|f*'_m\eta-f\|_{L^1(\Omega)}=0$.
\item Assume $u\in SBV(\Omega)$, then for each $m\in\mathbb Z^+$, we have $u'_m\in SBV(\Omega)$. In particular, its non-trivial jump set $\Gamma^*(u'_m)\subset\Gamma_m$, equivalently, $\nabla u'_m=\nabla^a u'_m$ on $\Omega\setminus\Gamma_m$. 
\item The jump singularity of $u'_m$ is equal to that of $u$ as a distribution, that is $\nabla^s u'_m=\nabla^s u$, on $\Omega$. In particular, $supp(\nabla^s u'_m)=supp(\nabla^su)=\Gamma_m$.
\item Let $u$, $u'_m$ be defined as above. There is convergence
\begin{equation*}
\lim_{m\to\infty}\|u'_m-u\|_{SBV(\Omega)}=0.
\end{equation*}
\end{enumerate}
\end{proposition}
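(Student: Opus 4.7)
The plan is to establish the four assertions in order, exploiting the tailor-made structure of the variable radius $r_m$. For (1), I would first treat continuous compactly supported $f$: since $r_m(x)\le 2\delta_m\to 0$ and $\eta$ has compact support, uniform continuity of $f$ yields uniform (hence $L^1$) convergence of $f*'_m\eta$ to $f$. For general $f\in L^1(\Omega)$ I would invoke density of $C_c(\Omega)$ together with a uniform bound $\|f*'_m\eta\|_{L^1}\le C\|f\|_{L^1}$, obtained by swapping the order of integration and performing, for each fixed $y\in\operatorname{supp}\eta$, the change of variables $x\mapsto x-r_m(x)y$; its Jacobian determinant is $|1-y\cdot\nabla r_m(x)|$, which is uniformly bounded away from zero once $\operatorname{supp}\eta$ is chosen small enough compared with the slope bound $|g_m'|<2$.

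For (2) and (3), the crucial observation is that at each $x$ the effective averaging ball is $B(x,r_m(x))$, which by construction lies inside $\Omega\setminus\Gamma_m$ except possibly at $x$ itself when $x\in\Gamma_m$: indeed when $d_m(x)\le\delta_m$ we have $r_m(x)=d_m(x)$, so the ball is at most tangent to $\Gamma_m$. Hence the mollification never averages across the jump locus. Off $\Gamma_m$, $r_m\in C^1$ and $r_m>0$; differentiating under the integral after the same change of variables as above shows that $u'_m\in W^{1,1}_{\mathrm{loc}}(\Omega\setminus\Gamma_m)$, which gives $\Gamma^*(u'_m)\subset\Gamma_m$ and $\nabla u'_m=\nabla^a u'_m$ there. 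At a point $x\in\Gamma_m$ the shrinking of $r_m$ to zero ensures that the one-sided traces of $u'_m$ coincide with $u^\pm$, so the distributional jump of $u'_m$ across $\Gamma_m$ matches exactly that of $u$ restricted to $\Gamma_m$, yielding the claimed identity $\nabla^s u'_m=\nabla^s u$.

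For (4), decompose $\|u'_m-u\|_{SBV(\Omega)}=\|u'_m-u\|_{L^1(\Omega)}+|u'_m-u|_{BV(\Omega)}$; the first term vanishes by (1). For the second, formally commuting the variable-radius mollification with the gradient gives $\nabla^a u'_m=(\nabla^a u)*'_m\eta+R_m$, where the main term converges to $\nabla^a u$ in $L^1$ again by (1), while the remainder $R_m$, arising from the chain-rule contribution $\nabla r_m$, is controlled pointwise by $|\nabla r_m|\le 2$ against a factor that vanishes in $L^1$ and is dispatched by dominated convergence. Combined with the exact cancellation of singular parts afforded by (3), this yields total-variation convergence.

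The main obstacle I anticipate is the rigorous implementation of step (3), where the map $x\mapsto r_m(x)$ degenerates on $\Gamma_m$ and is only Lipschitz globally. The jump-trace identification must therefore be carried out locally in tubular coordinates around each smooth component of $\Gamma_m$, using that $d_m$ is $C^1$ inside the tube $\{d_m<3\delta_m\}$; the changes of variables invoked in (1) and (4) likewise require a careful accounting of the Jacobian factor $|1-y\cdot\nabla r_m(x)|$ near the set where it approaches singular values. These technicalities are resolved systematically in the framework of \cite{de2017approximation}, on which our choice of $r_m$ and $g_m$ is modeled.
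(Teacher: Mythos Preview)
Your strategy is essentially the same as the paper's proof (Appendix~B): density plus uniform continuity for (1), differentiation under the integral together with the change of variables $x\mapsto x-r_m(x)y$ and its Jacobian bound for (2), preservation of one-sided traces on $\Gamma_m$ for (3), and a main-plus-remainder decomposition $\nabla^a u'_m=(\nabla u)*'_m\eta + R_m$ for (4). The paper packages the change-of-variables estimate into a separate BV composition lemma and carries out the trace identification in (3) via the fine-structure blow-up rather than tubular coordinates, but the underlying mechanism is identical to what you describe.
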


On the other hand, these properties of $u'_m$ yet do not entitle us to apply the energy control Lemma \ref{limsup_regular} directly, since it requires \textit{boundedness of gradient} as well. Therefore, we need to perform some sort of surgery on $u'_m$ near $\Gamma_m$. 

\begin{proposition}
Suppose $\{u'_m\}_{m=1}^{\infty}$ is defined as above, which, in particular, satisfies $\|u'_m-u\|_{SBV(\Omega)}\to0$, there exists another sequence $\{u''_m\}_{m=1}^{\infty}$ converging to the same limit $u\in SBV(\Omega)$, which not only possesses all the properties specified as in Proposition \ref{sequence_properties}, but is also subject to the element-wise bound $\|\nabla^a u''_m\|_{L^{\infty}}<\infty$ ($m\geq1$).
\end{proposition}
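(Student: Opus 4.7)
The strategy is to apply a Lipschitz-type truncation to each $u'_m$ that respects the jump structure on $\Gamma_m$. I would construct $u''_m$ by modifying $u'_m$ only on a small ``bad'' region where its absolutely continuous gradient is exceptionally large, replacing it there by a function of uniformly bounded gradient while leaving the jumps intact.

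First, fix $m$ and introduce the Hardy--Littlewood maximal function $M(|\nabla^a u'_m|)$ together with the bad set
\begin{equation}
B_m^L := \{x \in \Omega : M(|\nabla^a u'_m|)(x) > L\}.
\end{equation}
By the weak-type $(1,1)$ inequality, $|B_m^L| \leq C\|\nabla^a u'_m\|_{L^1}/L \to 0$ as $L \to \infty$. On the good set $\Omega \setminus B_m^L$, a Lusin-type argument (applied on each side of $\Gamma_m$) yields a classical representative of $u'_m$ that is $CL$-Lipschitz.

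Second, I would exploit the structure of $\Gamma_m$: by Lemma~\ref{curve_truncation}, it is a finite union of $C^1$-curve segments with mutually disjoint tubular neighborhoods. Inside each tube one has local $C^1$-coordinates splitting the tube into two sides, on each of which $u'_m$ lies in $W^{1,1}$. Apply a McShane-type Lipschitz extension independently on each side, with boundary traces on $\Gamma_m$ matching those of $u'_m$. Gluing these local pieces together with the global Lipschitz extension on $\Omega$ away from the tubes, I obtain $u_m^L \in SBV(\Omega)$ satisfying: (i) $u_m^L = u'_m$ on $\Omega \setminus B_m^L$; (ii) $\|\nabla^a u_m^L\|_{L^{\infty}(\Omega)} \leq CL$; (iii) $\Gamma^*(u_m^L) \subset \Gamma_m$ with $\nabla^s u_m^L = \nabla^s u'_m = \nabla^s u$, so the prescribed jump structure is preserved.

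Third, I would compare $u_m^L$ with $u'_m$ in the $SBV$ norm. Since they agree outside $B_m^L$ and both functions have controlled contributions on this set of vanishing Lebesgue measure, one verifies $\|u_m^L - u'_m\|_{SBV(\Omega)} \to 0$ as $L \to \infty$ for each fixed $m$. Choose $L_m \to \infty$ slowly enough so that $\|u_m^{L_m} - u'_m\|_{SBV(\Omega)} < 2^{-m}$, and define $u''_m := u_m^{L_m}$. Combined with $\|u'_m - u\|_{SBV(\Omega)} \to 0$ from Proposition~\ref{sequence_properties}, the triangle inequality yields $u''_m \to u$ in $SBV(\Omega)$, while by construction $\|\nabla^a u''_m\|_{L^{\infty}} \leq CL_m < \infty$ for every $m$.

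The main obstacle lies in the second step: the Lipschitz extension must be carried out near $\Gamma_m$ in such a way that the two one-sided traces of $u'_m$ on $\Gamma_m$ are preserved, so that neither new jumps are introduced (which would violate $\Gamma^*(u''_m) \subset \Gamma_m$) nor is the prescribed jump modified (which would destroy $\nabla^s u''_m = \nabla^s u$). This is enabled by the simple-union structure of $\Gamma_m$ furnished by Lemma~\ref{curve_truncation}: each segment admits a smooth tubular neighborhood disjoint from the others, reducing the problem to a standard one-sided Lipschitz extension for $W^{1,1}$ functions that one can execute separately on each side and then glue.
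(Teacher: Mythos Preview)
Your approach via Lipschitz truncation is natural, but there is a genuine obstruction in Step~2 that cannot be circumvented as stated. You require the extension $u_m^L$ to satisfy simultaneously $\|\nabla^a u_m^L\|_{L^\infty}\leq CL$ on each side of $\Gamma_m$ \emph{and} to match the one-sided traces of $u'_m$ on $\Gamma_m$ exactly. These two requirements are incompatible: a function with bounded gradient on one side of a $C^1$ curve has a Lipschitz (hence continuous) trace there, whereas the trace of $u'_m$ --- which equals the trace of $u$ by Lemma~\ref{lemma_jump} --- is merely an $L^1$ function along $\Gamma_m$ for a generic $u\in SBV(\Omega)$. Since the bad set $B_m^L$ will typically contain a full neighbourhood of $\Gamma_m$ (that is precisely where the mollification radius degenerates and $|\nabla^a u'_m|$ blows up), your McShane extension must manufacture the trace values from scratch, and no Lipschitz function can reproduce an arbitrary $L^1$ trace. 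The claim $\nabla^s u_m^L=\nabla^s u$ therefore fails in general. At best one can arrange the new trace to be $L^1$-close to the original one, and this is all that is actually used downstream (only $\|u''_m-u\|_{SBV}\to0$ and $\Gamma^*(u''_m)\subset\Gamma_m$ enter the application in Proposition~\ref{prop:nonhomo-moll}).

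The paper accepts this from the outset and proceeds differently. Rather than a Lipschitz truncation, it performs an explicit \emph{domain deformation} in tubular coordinates around each component of $\Gamma_m$: after truncating to $\bar u_m=\max\{\min\{u'_m,B_m\},-B_m\}$, a thin strip $\{0<t<t_\eta\}$ adjacent to $\Gamma_m$ (where $r_m(x)\to0$ and $\nabla^a u'_m$ may blow up) is discarded, and the remaining strip $\{t_\eta<t<\epsilon_m\}$ is smoothly stretched onto the full tube via a diffeomorphism $\phi_{m,i}^\pm$. Setting $u''_m=\bar u_m\circ(\phi_{m,i}^\pm)^{-1}$ gives a bounded gradient because $\bar u_m$ already has bounded gradient at distance $\geq t_\eta$ from $\Gamma_m$ and the deformation has bounded Jacobian. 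The new one-sided trace becomes $\bar u_m(\psi_{m,i}(\cdot,t_\eta))$, which differs from the original trace only by $O(2^{-m})$ in $L^1$; this is the content of the Egorov argument in Step~2 of the paper's proof. So the paper does \emph{not} preserve $\nabla^s u$ exactly either --- the proposition's phrasing is slightly loose on this point --- but approximate preservation suffices. Your outline could plausibly be repaired by abandoning exact trace matching and controlling instead the $L^1$-trace discrepancy, but as written property~(iii) is false.
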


\begin{proof}
{Fundamentally, our approach is by cutting off a narrow area near the singularity of $u$, which potentially consists of an unbounded (though pointwise finite) gradient, and then growing the nearby landscape of the same function into that vicinity.}

\textbf{Step 1.} {(Defining the tubular neighbourhoods.)}
Recall that the singularity upper-bound of $u'_m$ is expressed as $\Gamma_m=\bigcup_{i=1}^{N_m}\Gamma_{m,i}$ where each $\Gamma_{m,i}$ is a closed $C^1-$curve without self-intersection, it suffices to conduct the surgery on each component $\Gamma_{m,i}$ sequentially. 

By classical tubular neighborhood theorem, there exists a closed tubular neighbourhood $D_{m,i}(\epsilon^0_m)$ of $\Gamma_{m,i}$, together with a $C^1-$bijection $$\psi_{m,i}:[0,a_m]\times[-\epsilon^0_m,\epsilon^0_m]\to D_{m,i}(\epsilon^0_m),$$ such that $\psi_{m,i}([\frac{a_m}{4},\frac{3a_m}{4}]\times\{0\})\supset \Gamma_{m,i}$. We can also choose each $\epsilon^0_m$ small enough such that $D_{m,i}(\epsilon^0_m)\cap D_{m,j}(\epsilon^0_m)=\emptyset$ for any $i\neq j$. 

It immediately follows that the Jacobian $J(\psi_{m,i})$ (denoted simply as $J$ if the context is clear), its determinant, and its inverse are all continuous on this corresponding closed domain. As a result, there exists a constant $c=c_m>1$ such that the boundedness conditions $$|J(s,t)_{k,l}|<c, |J^{-1}(x)_{k,l}|<c, \|J\|_{2,2}<c \text{ and } \det(J(s,t))\in[c^{-1},c]$$ are satisfied for all $x\in D_{m,i}(\epsilon^0_m)$ and $(s,t)\in[0,a_m]\times[-\epsilon^0_m,\epsilon^0_m]$.

 We fix $\epsilon_m\leq\epsilon^0_m$ such that $\int_{D_{m,i}(\epsilon)}|\nabla^au'_m|<(c_mN_m)^{-1}2^{-m}$ for all $i=1,\dots,N_m$ (thus their sum is controlled by $2^{-m}$) and denote the closed domain simply as $D_{m,i}$ if the context is clear. 

{\textbf{Step 2.} (Trace difference control.)} We consider the truncated function $\bar u_m:=\max\{\min\{u'_m,B_m\},-B_m\} \in SBV(\Omega)$ where the number $B_m>0$ is chosen such that $\|\bar u_m-u'_m\|_{BV}<2^{-m}$. Note that this function is continuous both on $D^+_{m,i}:=\psi_{m,i}([0,a_m]\times[0,\epsilon_m])$ and on $D^-_{m,i}:=\psi_{m,i}([0,a_m]\times[-\epsilon_m,0])$.

Consider the positive part: denote the trace as $\bar u_{m,i}^+$, we have $$\lim_{t\to0^+}\bar u(\psi_{m,i}(s,t))=\bar u_{m,i}^+(\psi_{m,i}(s,0)).$$ Note that $\bar u_m\in SBV(\Omega)$ implies $\bar u^+_{m,i}\in L^1(\Gamma_{m,i})$. By Egorov's theorem, for any $\eta>0$, there exists $t_\eta>0$ such that for any $t\leq t_\eta$,
\begin{equation}
\mathcal H^1(\{s\in[0,a_m]:|\bar u_m(\psi_{m,i}(s,t))-\bar u_{m,i}^+(s)|\geq\eta\})<\eta.
\end{equation}
In particular, if $\eta\leq\eta_1:=\min\left\{\frac{2^{-m}}{2a_m\cdot N_m},\frac{2^{-m}}{2B_m\cdot N_m}\right\}$, {we have estimated between the (one-side) trace and the $t$-translated counterpart as follows,}
\begin{eqnarray}
&&\int_{\Gamma_{m,i}}|\bar u_m(s(x),t_\eta)-\bar u^+_{m,i}(x)|dx\\
&\leq&c_m\cdot\int_0^{a_m}|\bar u_m(\psi_{m,i}(s,t_\eta))-\bar u_{m,i}^+(\psi_{m,i}(s,0))|ds\\
&\leq&N_m^{-1}2^{-m},
\end{eqnarray}
{where the constant $c_m$ comes from the apparent coordinate transform (\emph{i.e.} the parametrization).}

\textbf{Step 3.} {(Domain deformation.)}
Consider a smooth function $\gamma:[0,1]\to\mathbb R^+$ such that $\gamma(s)=1$ for $s\in[\frac{1}{4},\frac{3}{4}]$ and $\gamma(s)=0$ for $s\in[0,\frac{1}{8}]\cup[\frac{7}{8},1]$, and accordingly the domain
\begin{equation}
\tilde D^+_{m,i}=\{\psi_{m,i}(s,t):s\in[0,a_m],\ t\in[t_\eta\gamma(s/a_m),\epsilon_m]\}\subset D^+_{m,i}.
\end{equation}
We define a bijective deformation $\phi^+_{m,i}:\tilde D^+_{m,i}\to D^+_{m,i}$ as
\begin{equation}
\phi^+_{m,i}(x)=\psi_{m,i}\left(s(x),\frac{\epsilon_m(t(x)-t_\eta\gamma(s(x)/a_m))}{\epsilon_m-t_\eta\gamma(s(x)/a_m)}\right),
\end{equation}
where $(s(x),t(x))=\psi^{-1}_{m,i}(x)$ are the coordinates of the point $x$. We notice that:

\begin{enumerate}
    \item[(i)] For boundary point $x\in[0,a_m]\times\{\epsilon_m\}\cup\{0,a_m\}\times[0,\epsilon_m]$ it has $\phi^+_{m,i}(x)=x$;
    \item[(ii)] The $t=t_\eta$ curve is pushed down to the $t=0$ curve $\Gamma_{m,i}$ where $s\in[\frac{a_m}{4},\frac{3a_m}{4}]$, while it goes trivial (i.e. there is no deformation) where $s\in[0,\frac{a_m}{8}]\cup[\frac{7a_m}{8},a_m]$.
\end{enumerate} 
\begin{figure}
    \centering
    \includegraphics[scale=0.75]{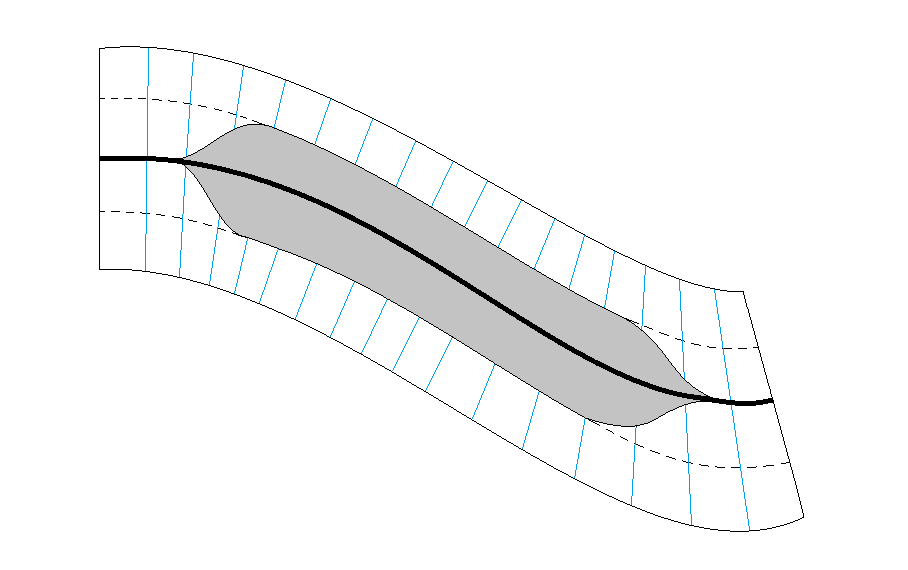}
    \caption{Deformations $\phi_{m,i}^{\pm}$, through which the function value in the shaded areas are excluded (neglected).}
    \label{domain_deform}
\end{figure}

For the domain $D^-_{m,i}$, we define $\tilde D^-_{m,i}$ and $\phi^-_{m,i}$ in the same way. Finally, we define $u''_m$ on $\Omega$ as
\begin{equation}
u''_m(x)=\left\{
\begin{matrix}
\bar u_m(x), & if\ x\in\Omega\setminus \bigcup_{i=1}^{N_m} D_{m,i}\\
\bar u_m((\phi^\pm_{m,i})^{-1}(x)), & if\ x\in D^\pm_{m,i}\\
\end{matrix}
\right.
\end{equation}
and verify that: (i) Jump of $u''_m$ is also supported on $\Gamma_m$, and is about $2^{-m}$ close to that of $u'_m$; (ii) $\int_{D_{m,i}}|\nabla^au''_m|<2\cdot2^{-m}$, that is, near singularity variation is at most doubled by our deformation; (iii) Bounded $\nabla^au''_m$ as we hope.

\end{proof}

The proof of Proposition~\ref{limsup} is a direct consequence of the $L^1$-convergence and the following proposition. 

\begin{proposition}\label{prop:nonhomo-moll}
    For any function $u\in SBV(\Omega)$, there exists a sequence $\{u^*_n\}_{n=1}^\infty\subset SBV(\Omega)$ such that $\|u^*_n-u\|_{SBV(\Omega)}\to0$ and $\lim_{n\to\infty}R_n(u^*_n)\geq R(u)$.
    \end{proposition}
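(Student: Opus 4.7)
The plan is to exhibit $u^*_n$ as a diagonal extract from the auxiliary family $\{u''_m\}$ produced in the preceding proposition, use Lemma~\ref{limsup_regular} to bound $R_n(u''_m)$ for each fixed $m$, and then upgrade the resulting $\limsup$ estimate to the stated limit inequality by invoking Proposition~\ref{liminf} for the matching lower bound. The family $\{u''_m\}$ comes with both (i) a controlled singular support --- a simple union of $C^1$-curves of total length close to $\mathcal{H}^1(\Gamma^*_u)$ --- and (ii) boundedness of the absolutely continuous gradient on the complement of that support, which are exactly the two hypotheses of Lemma~\ref{limsup_regular}.

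If $\mathcal{H}^1(\Gamma^*_u)=\infty$, then $R(u)=\infty$ and I would simply take $u^*_n:=u$: Proposition~\ref{liminf} already forces $\liminf_n R_n(u)\geq R(u)=\infty$, so the limit exists as $+\infty$ and trivially dominates $R(u)$. Assume henceforth $\mathcal{H}^1(\Gamma^*_u)<\infty$. Invoking the preceding proposition produces $\{u''_m\}\subset SBV(\Omega)$ with $\|u''_m-u\|_{SBV(\Omega)}\to 0$, with $\Gamma^*_{u''_m}\subset\Gamma_m$ a simple $C^1$-curve union satisfying $\mathcal{H}^1(\Gamma_m)\leq\mathcal{H}^1(\Gamma^*_u)+2^{-m}$, and with $\nabla^a u''_m\in L^\infty$. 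Lemma~\ref{limsup_regular} would then yield
\begin{equation}
\limsup_{n\to\infty} R_n(u''_m)\ \leq\ \int_\Omega |\nabla^a u''_m|+\alpha\cdot\mathcal{H}^1(\Gamma_m).
\end{equation}
Since $u''_m\to u$ in $SBV(\Omega)$ implies $\nabla^a u''_m\to\nabla^a u$ in $L^1$, and $\mathcal{H}^1(\Gamma_m)\to\mathcal{H}^1(\Gamma^*_u)$ by the length approximation in Lemma~\ref{curve_truncation}, the right-hand side tends to $R(u)$ as $m\to\infty$.

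A standard diagonal extraction then completes the construction. I would choose integers $n_m$ strictly increasing in $m$ such that $R_n(u''_m)\leq\int_\Omega|\nabla^a u''_m|+\alpha\cdot\mathcal{H}^1(\Gamma_m)+1/m$ whenever $n\geq n_m$, and set $u^*_n:=u''_{m(n)}$ with $m(n):=\max\{m:n_m\leq n\}$, so that $m(n)\to\infty$. Then $\|u^*_n-u\|_{SBV(\Omega)}\to 0$ and $\limsup_n R_n(u^*_n)\leq R(u)$ by the two-term limit just computed. Proposition~\ref{liminf} applied to this very sequence gives $\liminf_n R_n(u^*_n)\geq R(u)$, which pins $\liminf=\limsup=R(u)$; consequently $\lim_n R_n(u^*_n)$ exists and equals $R(u)$, which in particular delivers the stated $\lim_n R_n(u^*_n)\geq R(u)$.

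The chief technical subtlety will be that Lemma~\ref{limsup_regular}'s bound for $u''_m$ is non-uniform in $m$: the $L^\infty$ bound on $\nabla^a u''_m$ blows up as the surgical tubes of width $\epsilon_m$ shrink, so one cannot choose $u^*_n:=u''_n$ directly, nor invoke any single Lemma~\ref{limsup_regular} estimate uniformly. The role of $n_m$ is precisely to absorb the vanishing slack $1/m$ before passing to the $n$-limit, letting us decouple the ``$m$-surgery scale'' from the ``$n$-discretization scale''. Apart from this diagonal bookkeeping, the argument cleanly reduces Proposition~\ref{prop:nonhomo-moll} to the preceding mollification-and-surgery proposition together with the single-function estimate in Lemma~\ref{limsup_regular}, with Proposition~\ref{liminf} furnishing the matching lower bound that promotes the $\limsup$ inequality to an honest limit.
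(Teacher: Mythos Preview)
Your approach is essentially the same as the paper's: apply Lemma~\ref{limsup_regular} to each fixed $u''_m$, then diagonalize over $m$ to produce $u^*_n$. The paper's bookkeeping differs only cosmetically (it sets $N_m:=m+N'_m$ and lets $u^*_n:=u''_m$ for $N_m<n\le N_{m+1}$), and it bounds $\int_\Omega|\nabla^a u''_m|$ via the triangle inequality $\int_\Omega|\nabla^a u''_m|\le\int_\Omega|\nabla^a u|+\|u''_m-u\|_{SBV(\Omega)}$ rather than citing $L^1$-convergence of $\nabla^a u''_m$, but the content is identical.

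The one substantive difference is that the paper's own proof establishes only $\limsup_n R_n(u^*_n)\le R(u)$ and stops there; it does not invoke Proposition~\ref{liminf} to pin down the limit or address the ``$\ge$'' as written. Since this proposition is used solely to deduce Proposition~\ref{limsup} (the $\limsup\le$ recovery sequence), the stated inequality direction is almost certainly a typo for $\le$. Your extra step of calling Proposition~\ref{liminf} to promote $\limsup\le R(u)$ to an honest limit equal to $R(u)$ is correct and harmless, and has the virtue of matching the statement as literally written.
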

    \begin{proof}
    Now we arrange the sequence $\{u''_m\}_{m=1}^\infty$ defined in \eqref{eqn_conv} into what we need. As is indicated in \Cref{lemma_conv_sbv}, for any fixed $u''_m$ its non-trvial jump set $\Gamma^*(u''_m)\subset\Gamma_m$ where $\Gamma_m$ is finite and closed, starting from \Cref{limsup_regular} we have
    \begin{eqnarray*}
\limsup_{n\to\infty}R_n(u''_m)&\leq&\int_\Omega|\nabla^au''_m|+\alpha\cdot\mathcal H^1(\Gamma_m)\\
&\leq&\left(\int_\Omega|\nabla^au|+\|u''_m-u\|_{SBV(\Omega)}\right)+\alpha\cdot\left(\mathcal H^1(\Gamma^*)+2^{-m}\right)\\
    &=&R(u)+\beta_m,
    \end{eqnarray*}
    where $\beta_m:=\|u'_m-u\|_{SBV(\Omega)}+2^{-m}>0$ and $\beta_m\to0$. Define the number $N'_m\in\mathbb Z^+$ such that $E_n(u''_m)\leq E(u)+2\beta_m$ for any $n\geq N'_m$, then define $u''_0:=0$, $N'_0:=0$, $N_m:=m+N'_m$, and
    \begin{equation*}
    u^*_n:=u''_m\ \ \text{if}\ \ N_m+1\leq n\leq N_{m+1},
    \end{equation*}
    for each $m\in\mathbb Z^+$. The sequence $\{u^*_n\}_{n=1}^\infty$ satisfies desired conditions.
    \end{proof}

\section*{Acknowledgement}

Bin Dong is supported in part by the New Cornerstone Investigator Program 

\appendix

\section{{Counting lemmas on the lattice}}
\label{sec_lemmas}

{Here we summarize a couple of lemmas concerning the grid geometry around the $C^1$-curve $\Gamma^*_u$, where the function $u$ exhibits jump behaviour.}

\robustintersection*
\begin{proof}[Proof of Lemma~\ref{robust_intersection}]

Without loss of generality, we assume that $\Gamma = \cup_{i = 1}^K \Gamma_i$ and $n$ is chosen sufficiently small such that the length of $\Gamma_i$ are larger then $2^{-n+1}$. Suppose that $x \in \square_{n,k} \cap \Gamma_i$ for some $\Gamma_i$. Define $\square^1_{n,k}:=\bigcup_{l\in\left\{-1,0,1\right\}^2}\square_{n,k+l}$. Suppose that $\Gamma_i$ is entirely inside $\square^1_{n,k}$, then the lemma holds obviously. Suppose not, the set $P:=\{s\in[t,t+4\cdot2^{-n}]:\gamma_i(s)\in\partial\square^1_{n,k}\}\neq\emptyset$. By continuity of $\gamma_i$ and closedness of the boundary in question, the set $P$ is closed. Define $t':=\inf_{s\in P}s$ and $y:=\gamma_i(t')\in\partial\square^1_{n,k}$. Since $x\in\square_{n,k}$, we have $|x-y|\geq2^{-n}$ and the open segment $\gamma_i((t,t'))$ has curve length $|\gamma_i((t,t'))|_1\geq2^{-n}$. Simultaneously, we note that $\gamma_i((t,t'))\cap\overline{\left(\square^1_{n,k}\right)^c}=\emptyset$ by minimality of $t'$. Therefore we have
    \begin{eqnarray*}
    &&\sum_{l\in\{-1,0,1\}^2}\left|\gamma_i((t,t'))\cap\square_{n,k+l}\right|_1\\
    &=&\left|\gamma_i((t,t'))\cap\square^1_{n,k}\right|_1\\
    &=&|\gamma_i((t,t'))|_1\\
    &\geq&2^{-n},
    \end{eqnarray*}

\end{proof}

    \boundintersection*
    \begin{proof}
    Following the construction applied in the proof of Lemma \ref{robust_intersection}, again, we resort to taking sub-intervals when necessary, and assume $\Gamma=\bigcup_{i=1}^K\gamma_i([a_i,b_i])$ (in which $b_i-a_i>0$), where any $s,t\in[a_i,b_i]$ have $|\dot\gamma_i(s)-\dot\gamma_i(t)|<\pi/4$. For any $\gamma_i(t)\in\square_{n,k}$, as long as $|s-t|\geq2\cdot2^{-n}$, we have $|(\gamma_i(s)-\gamma_i(t))\cdot\dot\gamma_i(t)|\geq\sqrt2\cdot2^{-n}$ and thus $\gamma_i(s)\notin\square_{n,k}$. Therefore
    \begin{equation*}
|\gamma_i([a_i,b_i])\cap\square_{n,k}|_1\leq|\gamma_i([\max\{a_i,t-2\cdot2^{-n}\},\min\{b_i,t+2\cdot2^{-n}\}])|_1\leq4\cdot2^{-n},
    \end{equation*}
    and $|\Gamma\cap\square_{n,k}|_1\leq4\cdot K\cdot2^{-n}$ for any $n\geq1$.
    \end{proof}
    Now we are ready to prove \Cref{area_convergence}, which is the core lemma of these box counting lemmas.

    \begin{proof}[Proof of Lemma~\ref{area_convergence}]
    \begin{figure}
        \centering
        \includegraphics[scale=0.75]{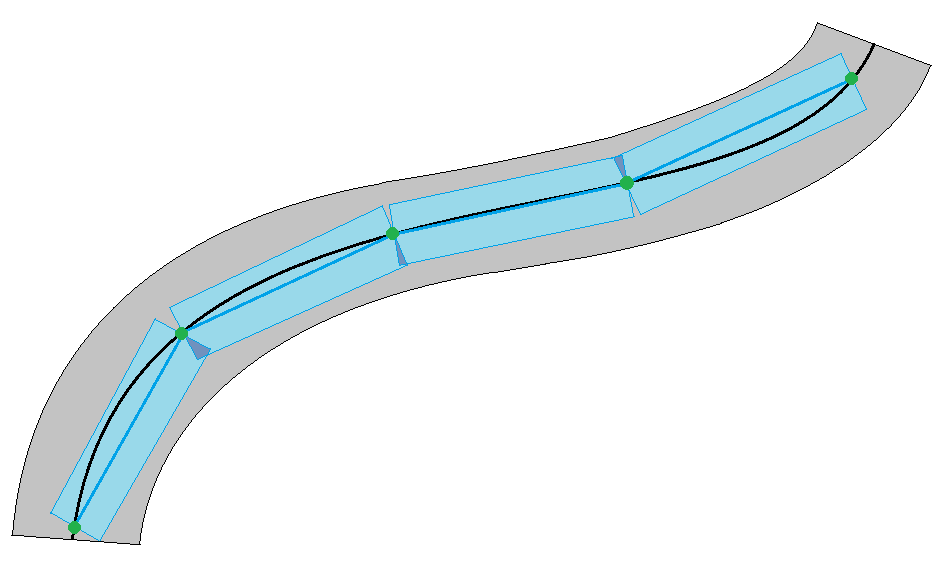}
        \caption{Approximating the $H_n$-tubular neighbourhood by a chain of rectangles of width $H'_n$.}
        \label{fig:tubular_rectangular}
    \end{figure}
    We first assume $\Gamma=\gamma([0,b])$, where $\gamma\in C^1([0,b])$ (thus $\dot\gamma\in C^0([0,b])$ and is uniformly continuous on $[0,b]$). For each $\eta\in(0,b]$, define
    \begin{equation}
    \mu(\eta):=\sup_{t,t'\in[0,b],\ |t-t'|\leq\eta}\left|\dot\gamma(t)-\dot\gamma(t')\right|.
    \end{equation}
    It is clear that $\lim_{\eta\to0^+}\mu(\eta)=0$. By simple geometric calculations, we get constant $C_1=\frac{1}{8}>0$ such that
    \begin{equation}
    |\gamma(t)-\gamma(t')|\geq\left(1-C_1\cdot\mu(\eta)^2\right)\cdot|t-t'|
    \end{equation}
    for any $|t-t'|\leq\eta$. Define $\tilde\Gamma_n$ as the piecewise linear curve that connects points
    \begin{equation}
    \gamma(0),\gamma(H_n),\cdots,\gamma((\lceil b/H_n\rceil-1)\cdot H_n),\gamma(b)
    \end{equation}
    in consequence. Denote $\mu_n:=\mu({4}H_n)$, $z_{n,i}:=\gamma(i\cdot{4}H_n)$ for each $i=0,\cdots,\lceil b/({4}H_n)\rceil-1$ and $z_{n,\lceil b/({4}H_n)\rceil}:=\gamma(b)$, we have
    \begin{equation}
    \arccos\left(\frac{z_{n,i+1}-z_{n,i}}{|z_{n,i+1}-z_{n,i}|}\cdot\frac{z_{n,i}-z_{n,i-1}}{|z_{n,i}-z_{n,i-1}|}\right)\leq2\mu_n
    \end{equation}
    for relevant indices $i$. For each (sufficiently large) $n$ such that $\mu_n\leq\pi/6$, we define an invertible, contractive projection $\Pi_n:\Gamma\to\tilde\Gamma_n$ such that $|x-\Pi_n(x)|\leq C_2H_n\mu_n$. In particular, for any $x=\gamma(t)$ where $t\in[0,b)$, there exists precisely one index $i$ such that $t\in[iH_n,\min\{(i+1)H_n,b\})$, we define 
    \begin{equation}
    \Pi_n(x)=z_{n,i}+\frac{\left<x-z_{n,i},z_{n,i+1}-z_{n,i}\right>}{|z_{n,i+1}-z_{n,i}|^2}\cdot(z_{n,i+1}-z_{n,i}),
    \end{equation} 
    and also $\Pi_n(\gamma(b)):=\gamma(b)$.

    Denote $\tilde S_n:=\Pi_n(S_n)$, we have $\mathcal H^1(\tilde S_n)\leq\mathcal H^1(S_n)$. For any $x\in\Omega$, such that $dist(x,\tilde\Gamma_n\setminus\tilde S_n)\leq H'_n:=H_n(1-C_2\mu_n)$, there exists $y'\in\tilde\Gamma_n\setminus\tilde S_n$ such that $|x-y'|\leq H_n(1-C_2\mu_n)+\epsilon$ for each $\epsilon>0$. Note that $y:=\Pi_n^{-1}(y')\in\Gamma\setminus S_n$ and $|x-y|\leq|x-y'|+|y'-y|\leq H_n+\epsilon$, therefore $dist(x,\Gamma\setminus S_n)\leq H_n$. That is,
    \begin{equation}
    \{x\in\Omega:dist(x,\tilde\Gamma_n\setminus\tilde S_n)\leq H'_n\}\subset\{x\in\Omega:dist(x,\Gamma\setminus S_n)\leq H_n\}.
    \end{equation}
    At any point $x\in\tilde\Gamma'_n:=\tilde\Gamma_n\setminus\{z_{n,i}\}_{1\leq i\leq\lceil\frac{b}{2H_n}\rceil}$, its unit $\tilde\Gamma_n-$tangent vector is well-defined, whose expression, in particular, can be written as $\tau_x=\frac{z_{n,i+1}-z_{n,i}}{|z_{n,i+1}-z_{n,i}|}$ where $i$ denotes the index of segment on which $x$ stays. Define at this $x\in\tilde\Gamma'_n$ a line segment
    \begin{equation}
    J_{n,x}:=\{y\in\Omega:(y-x)\cdot\tau_x=0,\ |y-x|\leq H'_n\},
    \end{equation}
    and we have
    \begin{equation}
    \bigcup_{x\in\tilde\Gamma'_n\setminus\tilde S_n}J_{n,x}\subset\{x\in\Omega:dist(x,\tilde\Gamma_n\setminus\tilde S_n)\leq H'_n\},
    \end{equation}
    where the left hand side is actually a finite union of rectangles that each overlaps at most with its two neighbours if $n$ is sufficiently large (since locally we have $(1-C_1\cdot\mu_n^2)\cdot4H_n\geq2H_n$, and globally, the non-intersecting condition. Each of these quadragonal overlaps apparently has an area upper-bound $(H_n')^2\tan\mu_n$.
    
    Altogether, we attain the estimate
    \begin{eqnarray}
    &&(2H_n)^{-1}\cdot\mathcal H^2(\{x\in\Omega:dist(x,\Gamma\setminus S_n)\leq H_n\})\\
    &\geq&(2H_n)^{-1}\cdot\mathcal H^2(\{x\in\Omega:dist(x,\tilde\Gamma_n\setminus\tilde S_n)\leq H'_n\})\\
    &\geq&(2H_n)^{-1}\cdot\mathcal H^2\left(\bigcup_{x\in\tilde\Gamma'_n\setminus\tilde S_n}J_{n,x}\right)\\
    &\geq&(2H_n)^{-1}\cdot\left(\left(b(1-C_1\mu_n^2)-\delta\right)\cdot2H'_n-\lceil b/({4}H_n)\rceil\cdot (H'_n)^2\tan\mu_n\right)\\
    &\geq&b\cdot\left(1-C_1\mu_n^2-\left({\frac{1}{2}}+C_2\right)\mu_n\right)-\delta,
    \end{eqnarray}
    and conclude the proof immediately.
    \end{proof}

\section{Technical details of the mollifier}\label{proof_sequence_properties}
In the rest of this section we validate the properties claimed in Proposition \ref{sequence_properties} each as a separate lemma. Symbols should denote the same objects (operations) as are defined previously, if not otherwise specified.

\begin{lemma}\label{bv_compose}
Assume $u\in BV(\Omega)$, and $f\in C^2(\Omega';\Omega)$ is an invertible transform. If there exists a constant $R>0$ such that at each $x\in\Omega'$, $y\in\Omega$, the Jacobian matrices $J_f,J_{f^{-1}}$ have norm bound $\|J_f(x)\|_2,\|J_{f^{-1}}(y)\|_2\leq \kappa$ as well as parity $\det(J_f(x))>0$, we have $u\circ f\in BV(\Omega')$, moreover, $|u\circ f|_{BV(\Omega')}\in[\kappa^{-1}|u|_{BV(\Omega)},\kappa|u|_{BV(\Omega)}]$.
\end{lemma}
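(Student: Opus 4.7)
The plan is to establish both bounds through the duality characterization of total variation, which avoids the thorny issue that a generic $BV$ function does not admit a pointwise chain rule on its singular gradient. The dual formulation already features in the definition of $|u|_{BV(\Omega)}$ quoted in the paper, and it is the correct framework here.

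First I would fix an arbitrary test field $v\in C_c^\infty(\Omega';\mathbb R^d)$ with $\|v\|_\infty\leq 1$ and consider its Piola-type push-forward
\[
\tilde v(y) := \frac{1}{\det J_f(f^{-1}(y))}\, J_f(f^{-1}(y))\, v(f^{-1}(y)),
\]
which, thanks to the $C^2$-regularity and positive-parity hypothesis on $f$, is a well-defined compactly supported $C^1$ vector field on $\Omega$. The classical Piola identity $\nabla_y\cdot\tilde v(y) = (\det J_f(x))^{-1}\,(\nabla_x\cdot v)(x)$ combined with the change-of-variables $dy=\det J_f(x)\,dx$ furnishes the divergence-preservation relation
\[
\int_{\Omega'} u(f(x))\,(\nabla_x\cdot v)(x)\,dx = \int_\Omega u(y)\,(\nabla_y\cdot\tilde v)(y)\,dy.
\]
From $\|J_f\|_2\leq\kappa$, the pointwise bound $|\det J_f(x)|=|\det J_{f^{-1}}(f(x))|^{-1}\geq\|J_{f^{-1}}\|_2^{-d}\geq\kappa^{-d}$, and $\|v\|_\infty\leq 1$, I would get $\|\tilde v\|_\infty\leq\kappa^{d+1}$; absorbing dimensional factors into the constant, call this bound $\kappa$ in the paper's notation.

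Next, taking the supremum over admissible test fields $v$ on $\Omega'$, the identity above together with the $\tilde v$-bound yields
\[
|u\circ f|_{BV(\Omega')} \;\leq\; \kappa\cdot|u|_{BV(\Omega)},
\]
and in particular $u\circ f\in BV(\Omega')$. The lower bound is obtained by a symmetric application of the same argument to the pair $(u\circ f,\,f^{-1})$: the hypotheses on $J_f,J_{f^{-1}}$ are symmetric in $f$ and $f^{-1}$, and $(u\circ f)\circ f^{-1}=u$, so the identical chain gives $|u|_{BV(\Omega)}\leq\kappa\cdot|u\circ f|_{BV(\Omega')}$, which rearranges to $|u\circ f|_{BV(\Omega')}\geq\kappa^{-1}|u|_{BV(\Omega)}$.

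The only delicate step is verifying the Piola divergence-preservation identity and tracking carefully how $\|\tilde v\|_\infty$ depends on $\kappa$ through the interplay of $\|J_f\|_2$ with $|\det J_f|$; once that is set up the BV inequality reduces to a clean duality argument without committing to any pointwise differentiation of $u$, which is essential because $\nabla u$ may carry a singular part supported on a jump set. If one prefers a more hands-on route, an alternative is to first approximate $u$ by $C^1$ functions $u_k$ in the strict BV topology (available from standard mollification), apply the classical chain rule $\nabla(u_k\circ f)=J_f^{\top}(\nabla u_k)\circ f$ pointwise together with a change of variables to obtain the bound for each $u_k$, and finally pass to the limit using lower semicontinuity of total variation.
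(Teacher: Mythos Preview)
Your approach is essentially the same as the paper's: both rest on the Piola transform combined with the dual characterization of total variation. The paper takes a near-maximizing test field $v^\epsilon$ on $\Omega$ and pulls it back to $\Omega'$ via $w^\epsilon(x)=\kappa^{-1}\mathrm{cof}(J_f(x))^{\top} v^\epsilon(f(x))$, computes $(\nabla_x\cdot w^\epsilon)(x)=\kappa^{-1}\det(J_f(x))\,(\nabla_y\cdot v^\epsilon)(f(x))$, and changes variables to obtain the \emph{lower} bound $|u\circ f|_{BV(\Omega')}\geq\kappa^{-1}|u|_{BV(\Omega)}$ first; you run the mirror-image argument, pushing a test field forward from $\Omega'$ to $\Omega$ to get the \emph{upper} bound first. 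Both then invoke symmetry in $(f,f^{-1})$ for the remaining inequality. So there is no real methodological difference.

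The one substantive gap is your constant. The lemma asserts the precise interval $[\kappa^{-1}|u|_{BV},\,\kappa|u|_{BV}]$, and the paper's proof attains exactly this in $d=2$ by using that in two dimensions $\|\mathrm{cof}(J_f)\|_2=\|J_f\|_2\leq\kappa$ (the cofactor matrix of a $2\times 2$ matrix is an orthogonal conjugate of its transpose). Your estimate $\|\tilde v\|_\infty\leq\kappa^{d+1}$ followed by ``absorbing dimensional factors into the constant, call this bound $\kappa$'' does not recover the stated conclusion; it proves a weaker lemma. The repair is immediate once you use the ambient $d=2$: for a $2\times 2$ matrix with singular values $\sigma_1\geq\sigma_2>0$ one has $(\det J_f)^{-1}\|J_f\|_2=\sigma_1/(\sigma_1\sigma_2)=1/\sigma_2=\|J_f^{-1}\|_2=\|J_{f^{-1}}\circ f\|_2\leq\kappa$, which gives exactly $\|\tilde v\|_\infty\leq\kappa$ and closes the argument with the correct constant.
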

\begin{proof}
Without loss of generality, we assume $u\neq0$. By definition of total variation, there exists a vector field $v^\epsilon\in C^\infty(\Omega;\mathbb R^2)$ with $\sup_{x\in\Omega}|v^\epsilon(x)|\leq1$, such that
\begin{equation}
-\int_\Omega u(x)(\nabla\cdot v^\epsilon)(x)dx\geq|u|_{BV(\Omega)}-\epsilon.
\end{equation}
We define
\begin{equation}
w^\epsilon(x)=v^\epsilon(f(x))\cdot \kappa^{-1}
\left(
\begin{matrix}
\frac{\partial f_2}{\partial x_2} & -\frac{\partial f_2}{\partial x_1}\\
-\frac{\partial f_1}{\partial x_2} & \frac{\partial f_1}{\partial x_1}
\end{matrix}
\right),
\end{equation}
and verify that $|w^\epsilon(x)|\leq|v^\epsilon(f(x))|\leq1$ for any $x\in\Omega$. Moreover,
\begin{eqnarray}
(\nabla\cdot w^\epsilon)(x)&=& \kappa^{-1}(\nabla\cdot v^\epsilon)(f(x))\cdot\left(\frac{\partial f_1}{\partial x_1}\cdot\frac{\partial f_2}{\partial x_2}-\frac{\partial f_2}{\partial x_1}\cdot\frac{\partial f_1}{\partial x_2}\right)\\
&=& \kappa^{-1}(\nabla\cdot v^\epsilon)(f(x))\cdot\det(J_f(x)),
\end{eqnarray}

and consequently
\begin{eqnarray}
&&-\int_{\Omega'}(u\circ f)(x)\ (\nabla\cdot w^\epsilon)(x)dx\\
&=&-\kappa^{-1}\int_{\Omega'} u(f(x))\ (\nabla\cdot v^\epsilon)(f(x))\cdot\det(J_f(x))dx\\
&=&-\kappa^{-1}\int_\Omega u(y)\ (\nabla\cdot v^\epsilon)(y)dy\\
&\geq&\kappa^{-1}(|u|_{BV(\Omega)}-\epsilon).
\end{eqnarray}
We obtain $|u\circ f|_{BV(\Omega')}\geq \kappa^{-1}(|u|_{BV(\Omega)}-\epsilon)$ by taking supremum, and conclude that $|u\circ f|_{BV(\Omega')}\geq R^{-1}|u|_{BV(\Omega)}$, since $\epsilon>0$ is arbitrarily small. Following the same logic but with $u\circ f$, $f^{-1}$ instead of $u$, $f$ involved in the argument, for any $a>0$ such that $|u\circ f|_{BV(\Omega')}\geq a$, we have $|u|_{BV(\Omega)}\geq R^{-1}(a-\epsilon)$. Therefore $|u\circ f|_{BV(\Omega')}<\infty$, that is, $u\circ f\in BV(\Omega')$. Moreover, $|u|_{BV(\Omega)}\geq R^{-1}|u\circ f|_{BV(\Omega')}$, and we conclude the lemma.
\end{proof}

\begin{lemma}\label{lemma_conv_type}
For any $f\in L^1(\Omega)$, we have $\lim_{m\to\infty}\|f*'_m\eta-f\|_{L^1(\Omega)}=0$.
\end{lemma}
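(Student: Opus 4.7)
The plan is the classical three-step approach for mollifier-type convergence in $L^1$, adapted to accommodate the nonstandard spatially-varying scale $r_m(x)\in[0,2\delta_m]$ with $\delta_m\to 0$: (i) a uniform $L^1\to L^1$ bound on $T_m\colon f\mapsto f*'_m\eta$; (ii) convergence on the dense class $C_c(\Omega)$; (iii) an $\varepsilon$-approximation to upgrade to all $f\in L^1(\Omega)$. The bound in Step (i) I would obtain by Fubini followed by a spatially-varying change of variables: $\|T_m f\|_{L^1}\leq\int\eta(y)\int|f(x-r_m(x)y)|\,dx\,dy$, and for each fixed $y\in\mathrm{supp}(\eta)$, substitute $z=\Phi_{m,y}(x):=x-r_m(x)y$, whose Jacobian determinant is $\det J\Phi_{m,y}(x)=1-y\cdot\nabla r_m(x)$. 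The construction gives $|g_m'|<2$, and the $1$-Lipschitz property of $d_m$ gives $|\nabla d_m|\leq 1$, so $|\nabla r_m|<2$ almost everywhere. Assuming, as is standard, that $\eta$ is supported in a small ball such as $B(0,1/4)$, the Jacobian stays between $1/2$ and $3/2$, and the contraction estimate $|\Phi_{m,y}(x)-\Phi_{m,y}(x')|\geq (1/2)|x-x'|$ makes $\Phi_{m,y}$ a global $C^1$-diffeomorphism of $\mathbb R^2$. Extending $f$ by zero outside $\Omega$ and changing variables yields $\|T_m f\|_{L^1}\leq 2\|f\|_{L^1}$ uniformly in $m$.

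For Step (ii), if $f\in C_c(\Omega)$ is uniformly continuous, then for any $\varepsilon>0$ there exists $\sigma>0$ such that $|f(x)-f(x')|<\varepsilon$ whenever $|x-x'|<\sigma$. For $m$ large enough that $2\delta_m\cdot\mathrm{diam}(\mathrm{supp}(\eta))<\sigma$, we have $|T_m f(x)-f(x)|<\varepsilon$ uniformly in $x$, hence $\|T_m f-f\|_{L^1}\leq\varepsilon|\Omega|\to 0$. For Step (iii), given arbitrary $f\in L^1(\Omega)$ and $\varepsilon>0$, pick $g\in C_c(\Omega)$ with $\|f-g\|_{L^1}<\varepsilon$; the triangle inequality together with the uniform bound from Step (i) and the convergence from Step (ii) yield $\|T_m f-f\|_{L^1}\leq\|T_m(f-g)\|_{L^1}+\|T_m g-g\|_{L^1}+\|g-f\|_{L^1}\leq 3\varepsilon+o(1)$, and letting $\varepsilon\to 0$ concludes the proof.

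The main obstacle is Step (i), since $\Phi_{m,y}$ depends nonlinearly on $x$ through the spatially varying scale $r_m$, and this is not a classical convolution. Its invertibility and the Jacobian control rest precisely on the construction of $g_m$ in the paper: by design, $r_m$ is globally $C^1$ (constant outside the thin tubular region $\{d_m<3\delta_m\}$, and $C^1$-regular inside where $d_m$ itself is assumed $C^1$-regular) with a uniform slope bound $|\nabla r_m|<2$. Once these geometric properties of $r_m$ are in hand and the injectivity/Jacobian bounds on $\Phi_{m,y}$ are verified, the rest of the argument reduces to classical mollifier theory.
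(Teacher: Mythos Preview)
Your proposal is correct and follows the same three-step density argument the paper uses: approximate $f$ by a uniformly continuous $\tilde f$, establish a uniform $L^1\to L^1$ bound on $T_m$ via the change of variables $z=x-r_m(x)y$, and combine by the triangle inequality. The only discrepancy is that the paper works with $\mathrm{supp}(\eta)\subset B(0,1)$ and simply asserts the operator bound (with constant $4$) without justification, whereas your explicit Jacobian/injectivity control relies on the additional hypothesis $\mathrm{supp}(\eta)\subset B(0,1/4)$; this is a cosmetic mismatch with the paper's setup rather than a gap in the method.
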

\begin{proof}
Since $f\in L^1(\Omega)$, there exists an $\tilde f\in C^\infty(\overline\Omega)$ such that $\|\tilde f-f\|_{L^1(\Omega)}<\epsilon$ for an arbitrarily prescribed precision $\epsilon>0$. (We denote the restriction of $\tilde f$ to $\Omega$ still as $\tilde f$.) Denote $\tilde f'_m:=\tilde f*'_m\eta$, we have
\begin{eqnarray*}
&&\int_{\Omega}|\tilde f'_m(x)-f'_m(x)|dx\\
&\leq&\int\int|\tilde f(x-r_m(x)y)-f(x-r_m(x)y)|\cdot\eta(y)dydx\\
&=&\int\left(\int|\tilde f(x-r_m(x)y)-f(x-r_m(x)y)|dx\right)\eta(y)dy\\
&\leq&\int4\|\tilde f-f\|_{L^1(\Omega)}\eta(y)dy\\
&=&4\epsilon
\end{eqnarray*}
for any $m\geq1$. By construction, $\tilde f$ is uniformly continuous on $\overline\Omega$, thus for sufficiently large $m$, $|x-x'|\leq 2^{-m}$ implies $|\tilde f(x)-\tilde f(x')|<\epsilon$. Consequently,
\begin{eqnarray*}
\int_\Omega|\tilde f'_m(x)-\tilde f(x)|dx&\leq&\int_\Omega\int|\tilde f(x)-\tilde f(x-r_m(x)y)|\cdot\eta(y)dydx\\
&\leq&\int_\Omega\int\epsilon\cdot\eta(y)dydx\\
&\leq&\epsilon.
\end{eqnarray*}
Altogether, we have
\begin{equation*}
\|f'_m-f\|_{L^1}\leq\|f'_m-\tilde f'_m\|_{L^1}+\|\tilde f'_m-\tilde f\|_{L^1}+\|\tilde f-f\|_{L^1}\leq6\epsilon
\end{equation*}
as is desired.
\end{proof}

\begin{lemma}\label{lemma_jump}
    The jump distribution of $u'_m$ is equal to that of $u$, that is $\nabla^s u'_m=\nabla^s u$, on $\Gamma_m$.
    \end{lemma}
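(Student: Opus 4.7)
The plan is to reduce the lemma to verifying the pointwise trace identity $(u'_m)^\pm = u^\pm$ at $\mathcal{H}^1$-a.e.\ point of $\Gamma_m$, and then to invoke the SBV jump decomposition. Granting Proposition \ref{sequence_properties}(2), that $\Gamma^*(u'_m) \subset \Gamma_m$, one has the standard representation $\nabla^s u'_m \lfloor_{\Gamma_m} = ((u'_m)^+ - (u'_m)^-)\,\nu\,d\mathcal{H}^1\lfloor_{\Gamma_m}$, with the analogous formula for $\nabla^s u$. The target identity $\nabla^s u'_m = \nabla^s u$ on $\Gamma_m$ therefore follows immediately from matching one-sided traces along $\Gamma_m$.

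To establish the trace identity, I would exploit the defining feature of the variable mollifier. Recall $r_m(x) = g_m(d_m(x))$ with $g_m(d) = d$ on $[0, \delta_m]$, so $r_m(x) = d_m(x)$ whenever $x$ is close enough to $\Gamma_m$. Assuming $\mathrm{supp}(\eta) \subset B(0,1)$ without loss of generality, the integrand of \eqref{eqn_conv} is then supported in the ball $B(x, d_m(x))$, which is internally tangent to $\Gamma_m$ at its nearest point $x_0 \in \Gamma_m$. Now fix such an $x_0$ where the blow-up relation \eqref{jump_local} holds, and take a sequence $x_n \to x_0$ from the positive side of $\Gamma_m$; set $\epsilon_n := d_m(x_n) = |x_n - x_0|$ for large $n$. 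Rescaling $u$ by $\epsilon_n$ about $x_0$, \eqref{jump_local} says $u(x_0 + \epsilon_n\,\cdot)$ converges in $L^2_{\mathrm{loc}}$ to the step $u^+\chi_{z \cdot \nu > 0} + u^-\chi_{z \cdot \nu < 0}$, while the rescaled averaging ball converges to a ball tangent to the separating hyperplane $\{z \cdot \nu = 0\}$ from the positive side. A Cauchy--Schwarz estimate, using $\eta \in L^\infty_c$, then yields $u'_m(x_n) \to u^+_{x_0}$, hence $(u'_m)^+ = u^+$; a symmetric argument handles $(u'_m)^-$.

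The main obstacle will be the curvature correction. Because $\Gamma_m$ has a nonzero second-order term, $B(x_n, \epsilon_n)$ does not lie strictly inside a half-space; it intersects the opposite side in a cap of area $O(\epsilon_n^3)$. To verify that this spurious contribution is $o(1)$ after the rescaled $\eta$-average, I would use that $\Gamma_m$ is a \emph{simple} union of finitely many $C^1$-curves, which furnishes (i) a uniform upper bound on curvature across all components and (ii) pairwise-disjoint tubular neighborhoods ensuring that, for $x_n$ sufficiently close to $x_0$, the nearest-point projection onto $\Gamma_m$ stays on the component through $x_0$. These two ingredients, combined with the $L^1$-integrability of the traces $u^\pm$ on $\Gamma_m$ and the $L^2_{\mathrm{loc}}$ convergence in \eqref{jump_local}, close the estimate for the cap's contribution. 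Feeding the resulting trace identity back into the SBV representation then delivers $\nabla^s u'_m \lfloor_{\Gamma_m} = (u^+ - u^-)\,\nu\,d\mathcal{H}^1\lfloor_{\Gamma_m} = \nabla^s u \lfloor_{\Gamma_m}$, as claimed.
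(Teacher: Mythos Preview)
Your reduction to verifying $(u'_m)^\pm = u^\pm$ at $\mathcal H^1$-a.e.\ point of $\Gamma_m$ is correct, and the key geometric insight --- that the mollifying ball $B(x,r_m(x))$ stays on one side of $\Gamma_m$ because $r_m(x)=d_m(x)$ near $\Gamma_m$ --- is exactly what the paper exploits. But the execution has two gaps. The main one is the leap from $u'_m(x_n)\to u^+_{x_0}$ along the normal sequence to $(u'_m)^+=u^+$. The latter is the \emph{approximate} one-sided limit, i.e.\ the value $a^+$ for which $r^{-2}\int_{B^+(x_0,r)}|u'_m-a^+|\to 0$; pointwise convergence along one family of sequences does not give this without further input (for tangential sequences the ratio $|x_n-x_0|/d_m(x_n)$ is unbounded and your rescaled blow-up comparison fails). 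The paper avoids this by computing the $L^1$ average directly: it shows $r^{-2}\int_{\tilde B^+(x_0,r)}|u'_m(y)-u^+|\,dy\to 0$ by expanding $u'_m(y)$ via the mollifier, noting that for every $y\in\tilde B^+(x_0,r)$ and $|z|<1$ the point $y-r_m(y)z$ stays in $\tilde B^+(x_0,2r)$ (otherwise the segment from $y$ to that point would cross $\Gamma_m$, contradicting $|y-(y-r_m(y)z)|\le r_m(y)=d_m(y)$), and then changing variables to reduce to the blow-up of $u$ itself. Here $\tilde B^\pm$ are the two pieces of $B(x_0,r)$ cut by the \emph{curve} $\Gamma_m$, not by its tangent line.

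The second gap is your appeal to a ``uniform upper bound on curvature'': $\Gamma_m$ is only a simple union of $C^1$ curve segments, so curvature need not exist. The paper handles the discrepancy between the curved half-ball $\tilde B^+$ and the flat half-ball $B^+$ determined by the tangent line using only the $C^1$ modulus of continuity of $\dot\gamma_m$: for any $\epsilon>0$ one has $\mathcal H^2(\tilde B^+\triangle B^+)\le 2\epsilon r^2$ once $r$ is small, which is enough. Incidentally, for the specific normal sequence you chose there is in fact no cap at all --- $B(x_0+\epsilon\nu,\epsilon)$ lies in the closed half-space $\{(\cdot-x_0)\cdot\nu\ge 0\}$ exactly --- so the obstacle you flag is not the operative one in your own setup; it only becomes relevant once you pass to the $L^1$ average over the curved half-ball, which is what you should be computing in the first place.
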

    \begin{proof}
    Throughout the proof of this lemma the index $m$ is fixed, thus we treat it as a constant. By Proposition \ref{fine_property}, for $\mathcal H^1-$a.e. $x\in\Gamma_m$,
    \begin{equation}\label{limit_local}
    \lim_{r\to0}\int_{B(0,1)}|u(x+ry)-u^+_x \chi_{y \cdot \nu > 0} + u^-_x \chi_{y \cdot \nu < 0}|^2dy=0,
    \end{equation}
    and the above equation yields $L^1$-convergence.

     Note that end points of $\Gamma_m$ are finite and therefore possess zero $\mathcal H^1-$measure. For $\mathcal H^1-$a.e. $x$ with limit \eqref{limit_local}, the local segment of $\Gamma_m$ around $x$ splits $B(x,2r)$ into two disjoint parts, denoted $\tilde B^+(x,2r)$ and $\tilde B^-(x,2r)$ respectively, as far as $r$ is sufficiently small (for instance, when $r\leq r_0/2$). Let the curve step function $\tilde\chi_x$ be defined as
     \begin{equation}
     \tilde\chi_x(y)=\left\{
     \begin{matrix}
     a^+ & \text{if}\ \ y\in\tilde B^+(x,r_0)\\
     a^- & \text{if}\ \ y\in\tilde B^-(x,r_0).
     \end{matrix}
     \right.
     \end{equation}
    Let $x=\gamma_m(t)$. Note the curve $\Gamma_m$ has $C^1$ regularity. Without loss of generality, we assume that it is parametrized by curve length, that is, $|\dot\gamma_m(t)|=1$ for all relevant $t$. For any $\epsilon\in(0,1/4)$, there exists $r_1>0$ such that $|\dot\gamma_m(s)-\dot\gamma_m(t)|<\frac{2}{\pi}\cdot\epsilon$ as long as $|s-t|\leq r_1$. Then for any $r\leq \frac{1}{\sqrt2}\cdot r_1$, for any $x'\in\Gamma_m\cap B(x,r)$, the line segment $x'x$ are included in the (two-side) $\pm\epsilon$-cone of the tangent line at $x$, we have
     \begin{eqnarray}\label{tilde_eqn}
     &&\lim_{r\to0}r^{-2}\int_{B(x,r)}|\tilde\chi_x(y)-\chi_x(y)|dy\\
     &\leq&\lim_{r\to0}r^{-2}\left(\frac{4\epsilon}{2\pi}\cdot\pi r^2\right)\\
     &=&2\epsilon.
     \end{eqnarray}
     Since $\epsilon$ can be taken arbitrarily small, this limit must be equal to 0. Then we have
     \begin{equation}
     \lim_{r\to0}r^{-2}\int_{\tilde B^\pm(x,r)}|u(x)-a^\pm|dx=0.
     \end{equation}
     Concerning the smoothened function $u'_m$, (for sufficiently large $m$) we have
     \begin{eqnarray}
     &&r^{-2}\int_{\tilde B^+(x,r)}|u'_m(y)-a^+|dy\\
     &=&r^{-2}\int_{\tilde B^+(x,r)}\left|\int_{B(0,1)}(u(y-r_m(y)z)-a^+)\eta(z)dz\right|dy\\
     &\leq&r^{-2}\int_{B(0,1)}\left(\int_{\tilde B^+(x,r)}|u(y-r_m(y)z)-a^+|dy\right)\cdot\eta(z)dz\\
     &\leq&r^{-2}\int_{B(0,1)}\left(2\int_{\tilde B^+(x,2r)}|u(y')-a^+|dy'\right)\cdot\eta(z)dz\\
     &=&2\cdot r^{-2}\int_{\tilde B^+(x,2r)}|u(y')-a^+|dy'\to0,
     \end{eqnarray}
     in which a coordinate change $y'=\tau(y):=y-r_m(y)x$ has been applied, where
     \begin{eqnarray}
     |y'-x|&\leq&|y'-y|+|y-x|\\
     &\leq&|r_m(y)|\cdot|z|+|y-x|\\
     &<&2r,
     \end{eqnarray}
     and, moreover, $y'\in\tilde B^+(x,2r)$. Indeed, if $y'\notin \tilde B^+(x,2r)$, then the line segment $\overline{y'y}$ has at least one intersection $y''$ with $\Gamma_m$ and $|y''-y'|<r_m$, which contradicts the fact $dist(y,\Gamma_m)\geq r_m$. Moreover, its Jacobian determinant $|\det J_\tau|\leq2$ for any sufficiently large $m$. 
     
     In the same way we get $r^{-2}\int_{\tilde B^-(x,r)}|u'_m(y)-a^-|dy\to0$. Add them together and enforce Equation \eqref{tilde_eqn}, we obtain the convergence
     \begin{equation}
     \lim_{r\to0}r^{-2}\int_{B(x,r)}|u'_m(y)-\tilde\chi_x(y)|dy=0.
     \end{equation}
    
     It is clear through the proof of Lemma \ref{lemma_conv_sbv} that $\nabla u'_m$ is absolutely continuous as a measure on $\Omega\setminus\Gamma_m$, that is, $u'_m$ may have jump discontinuity only on (a subset of) $\Gamma_m$.  By structure theorem of functions of bounded variation, for $\mathcal H^1-$a.e. $x\in\Omega$, there exists some step function $\chi'_x$ such that 
     \begin{equation}
     \lim_{r\to0}r^{-2}\int_{B(x,r)}|u'_m(y)-\chi'_x(y)|^2dy=0,
     \end{equation}
     or equivalently, $\lim_{r\to0}\|u'_m((\cdot-x)/r)-\chi'_x(\cdot-x)\|_{L^2(B(0,1))}=0$, and consequently the convergence $u'_m((\cdot-x)/r)\to\chi'_x(\cdot-x)$ also occurs in $L^1(B(0,1))$, and we identify $\chi'_x=\tilde\chi_x\in L^1(B(0,1))$ for $\mathcal H^1-$a.e. $x\in\Gamma_m$ and conclude our proof. 

    \end{proof}

\begin{lemma}\label{lemma_conv_sbv}
For each $m\in\mathbb Z^+$, $u'_m\in SBV(\Omega)$. In particular, its non-trival jump set $\Gamma^*(u'_m)\subset\Gamma_m$, equivalently, $\nabla u'_m=\nabla^a u'_m$ on $\Omega\setminus\Gamma_m$.
\end{lemma}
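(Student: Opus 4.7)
The plan is to establish the statement in three stages, relying on the geometric fact that the construction of $r_m = g_m \circ d_m$ guarantees $r_m(x) \le d_m(x)$ for all $x \in \Omega$, so that the mollifier's integration ball $B(x, r_m(x))$ stays on the same side of $\Gamma_m$ as $x$ and never crosses it. First I would recast the pointwise formula via the change of variables $z = x - r_m(x) y$ (at each fixed $x$) into the more workable form
\[
u'_m(x) = \int_\Omega u(z)\, K_m(x, z)\, dz, \qquad K_m(x, z) := \frac{1}{r_m(x)^2}\, \eta\!\left(\frac{x - z}{r_m(x)}\right).
\]
Fubini then immediately gives $u'_m \in L^1(\Omega)$, using the Lipschitz bound on $r_m$ inherited from the hypothesis on $g'_m$ together with the $1$-Lipschitz continuity of $d_m$.

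The core step is local regularity away from $\Gamma_m$. Fix any $x_0 \in \Omega \setminus \Gamma_m$ and a compact neighbourhood $U$ of $x_0$ with $U \subset \Omega \setminus \Gamma_m$. On $U$ the distance function $d_m$ is strictly positive and $C^1$, so $r_m$ is $C^1$ and bounded below; since $\eta \in C_c^\infty$ and $\mathrm{supp}(K_m(x,\cdot)) = B(x, r_m(x)) \subset \Omega \setminus \Gamma_m$ for every $x \in U$, the kernel $K_m(\cdot, z)$ is $C^1$ in $x$ with bounds uniform in $z$ over the relevant compact region. Differentiation under the integral, justified by $u \in L^1$ and dominated convergence, then yields $u'_m \in C^1(U)$, and hence $u'_m \in C^1(\Omega \setminus \Gamma_m)$ with an absolutely continuous gradient on that open set.

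For the global $BV$ bound I would view $u'_m$ as the smooth superposition $u'_m = \int (u \circ T_y)\, \eta(y)\, dy$, where $T_y(x) := x - r_m(x) y$. Provided $g_m$ is chosen with $|g'_m|$ sufficiently small (a free parameter in the construction), $T_y$ is a $C^1$ bi-Lipschitz self-map of $\Omega$ with Jacobian bounds uniform in $y \in \mathrm{supp}(\eta)$; Lemma \ref{bv_compose} then gives $|u \circ T_y|_{BV(\Omega)} \le \kappa_m |u|_{BV(\Omega)}$ for a constant $\kappa_m$ depending only on $m$, and the lower-semicontinuity of the $BV$ seminorm under convex combinations yields $|u'_m|_{BV(\Omega)} \le \kappa_m |u|_{BV(\Omega)} < \infty$. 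Combined with the previous step, $u'_m \in BV(\Omega)$ and any singular part of $Du'_m$ is concentrated on the $C^1$-rectifiable, $\mathcal{H}^1$-finite set $\Gamma_m$. The one-sided $C^1$ regularity of $u'_m$ across $\Gamma_m$ guarantees well-defined traces $(u'_m)^\pm$ at $\mathcal{H}^1$-a.e. point of $\Gamma_m$, so this singular part is precisely the jump measure, which is absolutely continuous with respect to $\mathcal{H}^1 \lfloor \Gamma_m$; the Cantor part therefore vanishes and $u'_m \in SBV(\Omega)$ with $\Gamma^*(u'_m) \subset \Gamma_m$, as asserted.

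The hard part will be the local regularity step: rigorously justifying differentiation under the integral for the spatially-modulated kernel $K_m(x, z)$, whose support \emph{and} scale both depend on $x$ through $r_m(x)$. Every conclusion drawn pivots on the geometric invariant $B(x, r_m(x)) \cap \Gamma_m = \emptyset$ for all $x \notin \Gamma_m$, which is secured by the choice of $g_m$ and must be exploited carefully when bounding derivatives of $K_m$ at points near (but not on) $\Gamma_m$.
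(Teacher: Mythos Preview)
Your proposal is correct and follows essentially the same route as the paper: rewrite $u'_m$ via the spatially modulated kernel to get $C^1$ regularity on $\Omega\setminus\Gamma_m$, invoke Lemma~\ref{bv_compose} on the maps $T_y(x)=x-r_m(x)y$ to control the BV seminorm, and then use $\mathcal H^1(\Gamma_m)<\infty$ to rule out any Cantor part. The one substantive difference is how you pass to the global BV bound: you average the estimates $|u\circ T_y|_{BV(\Omega)}\le\kappa_m|u|_{BV(\Omega)}$ directly via the convexity (duality) definition of the seminorm, whereas the paper first bounds $\int_{\Omega\setminus\Gamma_m}|\nabla u'_m|$ and then adds a separate trace computation on a tubular neighbourhood of $\Gamma_m$. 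Your version is cleaner and avoids that extra step.

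One genuine slip: your parenthetical ``provided $g_m$ is chosen with $|g'_m|$ sufficiently small (a free parameter in the construction)'' is wrong. The construction fixes $g_m(d)=d$ on $[0,\delta_m]$, so $g'_m\equiv1$ there and $|\nabla r_m|=1$ near $\Gamma_m$; you cannot shrink it. What you actually need for $T_y$ to be bi-Lipschitz is that $J_{T_y}=I-y(\nabla r_m)^{\!\top}$ is uniformly nonsingular, i.e.\ $\det J_{T_y}=1-y\cdot\nabla r_m$ stays bounded away from $0$. This holds as soon as $\mathrm{supp}(\eta)\subset B(0,\rho)$ for some $\rho<1$ together with the stated bound $g'_m<2$ (giving $|y\cdot\nabla r_m|<2\rho$), which is the standard assumption on a mollifier and is implicitly what the paper uses as well. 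Replace the ``$|g'_m|$ small'' remark by this rank-one Jacobian computation and your argument goes through.
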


\begin{proof}
By change of coordinates, it is easy to see that
\begin{eqnarray*}
u'_m(x)&:=&\int u(x-r_m(x)y)\eta(y)dy\\
&=&\int u(y)\cdot\frac{1}{r_m(x)^2}\eta\left(\frac{x-y}{r_m(x)}\right)dy.
\end{eqnarray*}

Since $r_m\in C^1(\Omega\setminus\Gamma_m)$, we immediately obtain that $u_m'(x)\in C^1(\Omega\setminus\Gamma_m)$. In other words, $\nabla u'_m=\nabla^au'_m$ as measures on $\Omega\setminus\Gamma_m$. We regard $u(x-r_m(x)y)$ as a function of $x$ on $\Omega\setminus\Gamma_n$ (parametrized by $y\in B(0;1)$, $m\geq1$), and apply Lemma \ref{bv_compose} to $u\in BV(\Omega)$ and $f(x)=x-r_m(x)y:\Omega\setminus\Gamma_m\to\Omega\setminus\Gamma_m$ (where its dependence on $y$ and $m$ is understood). We verify that $f\in C^{\infty}(\Omega\setminus\Gamma_m)$ and
\begin{equation*}
J_f(x)=\left(
\begin{matrix}
1 & 0\\
0 & 1
\end{matrix}
\right)-\left(
\begin{matrix}
y_1\cdot\partial_1r_m(x) & y_1\cdot\partial_2r_m(x)\\
y_2\cdot\partial_1r_m(x) & y_2\cdot\partial_2r_m(x)
\end{matrix}
\right)
\end{equation*}
where $|y_i\cdot\partial_jr_m(x)|\leq|y|\cdot|\nabla r_m(x)|\leq 2^{-m}$ by construction. Consequently, $\|J_{f}(x)\|_2\leq2$, $\|J_{f^{-1}}(x)\|_2\leq2$ for any $y\in B(0;1)$ and any sufficiently large $m$, and we have
\begin{equation*}
\int_\Omega|\nabla_x u(x-r_m(x)y)|dx\leq 2\int_\Omega|\nabla u(x)|dx.
\end{equation*}

The total variation of $u'_m$ on $\Omega\setminus\Gamma_m$ can therefore be estimated as
\begin{eqnarray*}
\int_{\Omega\setminus\Gamma_m}|\nabla u'_m(x)|dx&=&\int_{\Omega\setminus\Gamma_m}\left|\nabla_x \int u(x-r_m(x)y)\eta(y)dy\right|dx\\
&\leq&\int\left(\int_{\Omega\setminus\Gamma_m}|\nabla_x u(x-r_m(x)y)|dx\right)\cdot\eta(y)dy\\
&\leq&\int\left(2\int_{\Omega\setminus\Gamma_m}|\nabla u(x)|dx\right)\eta(y)dy\\
&=&2\int_{\Omega\setminus\Gamma_m}|\nabla u(x)|dx<\infty.
\end{eqnarray*}
Now we consider the variation of $u'_m$ on the entire domain $\Omega$. In particular, we take a sufficiently small tubular neighbourhood $\tilde\Gamma_m$ of $\Gamma_m$, and note that for any $w\in C^\infty_0(\Omega)$ such that $|w(x)|\leq1$, we have a decomposition $w(x)=v(x)+\sigma(x)$ where $v$ and $\sigma$, having $|v(x)|<1$, $|\sigma(x)|<1$, are supported on $\Omega\setminus\Gamma_m$ and $\tilde\Gamma_m$ respectively. In this way,
\begin{eqnarray*}
&&\int_\Omega u'_m(x)(\nabla\cdot w)(x)dx\\
&=&\int_{\Omega\setminus\Gamma_m}u'_m(x)(\nabla\cdot v)(x)dx+\int_{\tilde\Gamma_m}u'_m(x)(\nabla\cdot\sigma)(x)dx\\
&=&\int_{\Omega\setminus\Gamma_m}u'_m(x)(\nabla\cdot v)(x)dx\\
&&\ \ \ \ +\left(-\int_{\tilde\Gamma_m\setminus\Gamma_m}\left<\nabla u(x),\sigma(x)\right>dx+\int_{\Gamma_m}(tr^+[u'_m](x)-tr^-[u'_m](x))\left<\sigma(x),n(x)\right>dx\right)\\
&\leq&\int_{[\Omega\setminus\Gamma_m]}|\nabla u'_m(x)|dx+\int_{\tilde\Gamma_m\setminus\Gamma_m}|\nabla u'_m(x)|dx+\int_{\Gamma_m}|tr^+[u'_m](x)-tr^-[u'_m](x)|dx
\end{eqnarray*}

That is, $u'_m\in BV(\Omega)$. Furthermore, since $\nabla u'_m=\nabla^au'_m$ on $\Omega\setminus\Gamma_m$, the Cantor part of this gradient, $\nabla^cu'_m$, vanishes on $\Omega\setminus\Gamma_m$, and therefore vanishes on $\Omega$ since $\mathcal H^1(\Gamma_m)<\infty$ (which implies $\mathcal H^1(A)<\infty$, thus $\nabla^su'_m(A)=0$, for any $A\subset\Gamma_m$), that is, $u'_m\in SBV(\Omega)$. Moreover, $\Gamma^*(u'_m)\subset\Gamma_m$.

\end{proof}

\begin{lemma}
Let $u$, $u'_m$ be defined as above. There is convergence
\begin{equation*}
\lim_{m\to\infty}\|u'_m-u\|_{SBV(\Omega)}=0.
\end{equation*}
\end{lemma}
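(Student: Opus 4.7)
The plan is to decompose $\|u'_m - u\|_{SBV(\Omega)} = \|u'_m - u\|_{L^1(\Omega)} + |\nabla(u'_m - u)|(\Omega)$ and establish that each piece vanishes. The first piece is already the content of Lemma \ref{lemma_conv_type} applied to $u$, so the main effort is the gradient-measure term. Since both $u$ and $u'_m$ (the latter by Lemma \ref{lemma_conv_sbv}) belong to $SBV(\Omega)$, the Cantor parts are absent, and I would split $\nabla(u'_m - u)$ into its absolutely continuous and singular components and treat them separately.

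The singular piece is immediate. By Lemma \ref{lemma_jump} together with Lemma \ref{lemma_conv_sbv}, $\nabla^s u'_m$ is supported on $\Gamma_m$ and coincides there with $\nabla^s u\lfloor_{\Gamma_m}$, hence $\nabla^s(u'_m - u) = -\nabla^s u\lfloor_{\Omega\setminus\Gamma_m}$, whose total variation is at most $\int_{\Omega\setminus\Gamma_m}|\nabla^s u| < 2^{-m}$ by the construction of $\Gamma_m$. For the absolutely continuous piece the target is $\|\nabla^a u'_m - \nabla^a u\|_{L^1(\Omega)}\to 0$. My approach is to first approximate $\nabla^a u$ in $L^1(\Omega)$ by a smooth, compactly supported vector field $v$ within error $\epsilon$, and then differentiate the representation $u'_m(x)=\int u(x - r_m(x)y)\eta(y)\,dy$ under the integral sign on the smooth region $\Omega\setminus\Gamma_m$ to obtain, distributionally,
\begin{equation*}
\nabla u'_m(x) = \int \nabla u(x - r_m(x)y)\eta(y)\, dy \ -\ \nabla r_m(x)\cdot\int \bigl(\nabla u(x - r_m(x)y)\cdot y\bigr)\eta(y)\, dy.
\end{equation*}
For the smooth surrogate $v$ the corresponding two terms converge uniformly to $v$ and to $0$ respectively as $r_m(x)\leq 2\delta_m\to 0$, using a Taylor expansion together with the kernel symmetry $\int y\,\eta(y)\,dy=0$ and the uniform bound $|\nabla r_m|\leq 2$. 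The $L^1$-error coming from the approximation $v\approx\nabla^a u$ is absorbed into $O(\epsilon)$ by a Fubini estimate entirely analogous to the one proved in Lemma \ref{lemma_conv_type}. Sending $m\to\infty$ first and then $\epsilon\to 0$ concludes the absolutely continuous estimate.

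The main obstacle is the second, scale-variation term in the identity above, which has no counterpart in classical fixed-scale mollification and is generated purely by the spatial dependence of $r_m$. To show this contribution is $L^1$-small I would rely on a twofold argument: pointwise almost-everywhere vanishing at Lebesgue points of $\nabla^a u$, exploiting the symmetry $\int y\,\eta\,dy=0$, coupled with an $L^1$-domination analogous to the one carried out inside the proof of Lemma \ref{lemma_conv_sbv}, after which dominated convergence promotes the pointwise vanishing to $L^1$-vanishing. A second, more delicate point is that the interior $\nabla u$ appearing in the differentiated identity is distributional and may carry the jump of $u$ across $\Gamma_m$ precisely when $B(x,r_m(x))$ crosses $\Gamma_m$; tracking this singular contribution separately shows it is exactly what reproduces $\nabla^s u'_m\lfloor_{\Gamma_m}$ as in Lemma \ref{lemma_jump}, hence does not disturb the absolutely continuous estimate. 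The overall template follows the $SBV$-mollification scheme of \cite{de2017approximation} specialised to our varying-scale kernel.
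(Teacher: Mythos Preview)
Your proposal follows essentially the same route as the paper: decompose $\|u'_m-u\|_{SBV}$ into the $L^1$ piece (Lemma~\ref{lemma_conv_type}), the singular piece (Lemmas~\ref{lemma_jump} and~\ref{lemma_conv_sbv} together with the residue bound $\int_{\Omega\setminus\Gamma_m}|\nabla^s u|<2^{-m}$), and the absolutely continuous piece; and for the latter, differentiate the representation under the integral to produce a main term and a scale-variation term, exactly as the paper's $I_m^1$ and $I_m^2$.

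The one substantive difference is the treatment of the scale-variation term
\[
I_m^2=\int_{\Omega\setminus\Gamma_m}\Bigl|\nabla r_m(x)\int\langle\nabla u(x-r_m(x)y),\,y\rangle\,\eta(y)\,dy\Bigr|\,dx.
\]
The paper pulls out $|\nabla r_m|$ and asserts a prefactor $2^{-m}$, concluding $I_m^2\le C\cdot 2^{-m}\|u\|_{SBV}\to 0$. You instead exploit the symmetry $\int y\,\eta(y)\,dy=0$ at Lebesgue points of $\nabla^a u$, coupled with an $L^1$ domination and dominated convergence. Since the construction of $r_m$ only guarantees $|\nabla r_m|<2$ (from $g_m'<2$), not $|\nabla r_m|\le 2^{-m}$, the paper's direct bound as written is not justified, and your symmetry-based argument is the sound way to close this step. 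One small clean-up: rather than saying the distributional jump inside the convolution ``reproduces $\nabla^s u'_m|_{\Gamma_m}$'', it is tidier---and parallel to the paper's handling of $I_m^1$---to split $\nabla u=\nabla^a u+\nabla^s u$ inside the integral first, run the symmetry/Lebesgue-point argument on the $\nabla^a u$ portion, and bound the $\nabla^s u$ portion separately via the change of variables and the $2^{-m}$ residue estimate.
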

\begin{proof}
The absolutely continuous part of the relevant gradients can be estimated as follows,
\begin{eqnarray*}
&&\int_\Omega|\nabla^au'_m(x)-\nabla^au(x)dx|\\
&=&\int_{\Omega\setminus\Gamma_m}|\nabla^au'_m(x)-\nabla^au(x)|dx\\
&\leq&\int_{\Omega\setminus\Gamma_m}\left|\int_{\mathbb R^2}\nabla u(x-r_m(x)y)\eta(y)dy-\nabla^au(x)\right|dx   \ \ \ \ \ (=:I_m^1)\\
&&+\int_{\Omega\setminus\Gamma_m}\left|\nabla r_m(x)\int_{\mathbb R^2}\left<\nabla u(x-r_m(x)y),y\right>\eta(y)dx\right|dx\ \ \ \ \ (=:I_m^2)
\end{eqnarray*}

We first estimate $I_m^1$ by consider the regular part and singular part separately, that is,
\begin{eqnarray*}
    I_m^1 &\le& \int_{\Omega\setminus\Gamma_m}\left|\int_{\mathbb R^2}\nabla^au(x-r_m(x)y)\eta(y)dy-\nabla^au(x)\right|dx
+\int_{\Omega\setminus\Gamma_m}\int_{\mathbb R^2}|\nabla^su(x-r_m(x)y)|\cdot\eta(y)dydx \\
& \le & \int_{\Omega\setminus\Gamma_m}\left|\int_{\mathbb R^2}\nabla^au(x-r_m(x)y)\eta(y)dy-\nabla^au(x)\right|dx+C\cdot\int_{\Omega\setminus\Gamma_m}|\nabla^su|
\end{eqnarray*}
The first term in the right hand side vanishes as a consequence of Lemma \ref{lemma_conv_type}, and the second term vanishes by the construction of $\Gamma_m$.

For $I_m^2$, it holds that 
\begin{eqnarray*}
    I_m^2 &:=& 2^{-m}\int_{\Omega\setminus\Gamma_m}\int_{B(0,1)}|\nabla u(x-r_m(x)y)|\cdot\eta(y)dydx \\
    &\le& C\cdot2^{-m}\int_{\Omega\setminus\Gamma_m}|\nabla u|.
\end{eqnarray*}
Since
\begin{equation*}
2^{-m}\int_{\Omega\setminus\Gamma_m}|\nabla u|\leq2^{-m}\|u\|_{SBV(\Omega)}\to0
\end{equation*}
for $u\in SBV(\Omega)$. Combining this estimate with Lemma \ref{lemma_jump} and again Lemma \ref{lemma_conv_type}, we have
\begin{eqnarray*}
&&\|u'_m-u\|_{SBV(\Omega)}\\
&\leq&\|u'_m-u\|_{L^1(\Omega)}+\int_{\Gamma_m}|\nabla^su'_m-\nabla^su|+\int_{\Omega\setminus\Gamma_m}|\nabla^su'_m-\nabla^su|+\int_\Omega|\nabla^au'_m-\nabla^au|\\
&\leq&\|u'_m-u\|_{L^1(\Omega)}+0+2^{-m}+(I_m^1 + I_m^2)\ \ \to0,
\end{eqnarray*}
as $m\to\infty$.
\end{proof}

\bibliographystyle{plain}
\bibliography{ref}
\end{document}